\numberwithin{equation}{section}
\newtheorem{theorem}{Theorem}[section]
\newtheorem{lemma}[theorem]{Lemma}
\newtheorem{remark}[theorem]{Remark}
\newtheorem{proposition}[theorem]{Proposition}
\newtheorem*{claim*}{Claim}
\newcommand{\ef}{\eqref}
\newcommand{\dis}{\displaystyle}
\newcommand{\C}{\mathbb{C}}
\newcommand{\N}{\mathbb{N}}
\newcommand{\R}{\mathbb{R}}
\newcommand{\ep}{\varepsilon}
\newcommand{\intR}{\int_{\mathbb{R}^3}}
\newcommand{\IT}{\int_{\R^3}}
\newcommand{\LT}{L^2(\R^3)}
\title
[Schr\"odinger-Poisson system with a doping profile]
{Stable standing waves for Nonlinear Schr\"odinger-Poisson system 
with a doping profile}
\author[M.~Colin]{Mathieu Colin}
\author[T.~Watanabe]{Tatsuya Watanabe$^*$}
\address[M.~Colin]{\newline\indent
University of Bordeaux, CNRS, Bordeaux INP, 
IMB, UMR 5251,  F-33400, Talence, France
\newline\indent
 IMB, UMR 5251,  F-33400, Talence, France
}
\email{mathieu.colin@math.u-bordeaux.fr}
\address[T.~Watanabe]{\newline\indent 
Department of Mathematics, Faculty of Science, Kyoto Sangyo University,
\newline\indent
Motoyama, Kamigamo, Kita-ku, Kyoto-City, 603-8555, Japan}
\email{tatsuw@cc.kyoto-su.ac.jp}
\thanks{* Corresponding author}
\subjclass[2010]{35J20, 35B35, 35Q55}
\date{\today}
\keywords{nonlinear Schr\"odinger-Poisson system, doping profile,
$L^2$-constraint minimization problem, strict sub-additivity, orbital stability.}
\begin{document}

\begin{abstract}
This paper is devoted to the study of 
the nonlinear Schr\"odinger-Poisson system with a doping profile.
We are interested in the existence of stable standing waves
by considering the associated $L^2$-minimization problem.
The presence of a doping profile causes a difficulty 
in the proof of the strict sub-additivity.
A key ingredient is to establish the strict sub-additivity
by adapting a scaling argument,
which is inspired by \cite{ZZou}.
When the doping profile is a characteristic function supported on 
a bounded smooth domain, smallness of some geometric quantity
related to the domain ensures the existence of stable standing waves.
\end{abstract}

\maketitle

\section{Introduction}

In this paper, we are concerned with the following nonlinear 
Schr\"odinger-Poisson system:
\begin{equation} \label{eq:1.1}
\begin{cases}
& -\Delta u+\omega u+e \phi u=|u|^{p-1} u \\
& -\Delta \phi =\frac{e}{2} \left( |u|^2- \rho(x) \right)
\end{cases} 
\quad \hbox{in} \ \R^3,
\end{equation}
where $\omega \in \R$, $e>0$ and $1<p<\frac{7}{3}$.
Equation \ef{eq:1.1} appears as a stationary problem for the 
time-dependent nonlinear Schr\"odinger-Poisson system:
\begin{equation} \label{eq:1.2}
\begin{cases}
i \psi_t + \Delta \psi - e\phi \psi + |\psi|^{p-1} \psi =0 
\quad \hbox{in} \ \R_+ \times \R^3, \\
-\Delta \phi = \frac{e}{2} \big( |\psi|^2-\rho(x) \big) 
\quad \hbox{in} \ \R_+ \times \R^3, \\
\psi(0,x)=\psi_0.
\end{cases}
\end{equation}
Indeed when we look for a standing wave of the form: $\psi(t,x)=e^{i \omega t} u(x)$,
we are led to the elliptic problem \ef{eq:1.1}.
In this paper, we are interested in the existence of stable standing waves 
for \ef{eq:1.2} by considering the solvability of the associated 
$L^2$-constraint minimization problem.

The Schr\"odinger-Poisson system appears 
in various fields of physics, such as quantum mechanics,
black holes in gravitation and plasma physics.
Especially, the Schr\"odinger-Poisson system plays an important role
in the study of semi-conductor theory; see \cite{Je, MRS, Sel},
and then the function $\rho(x)$ is referred as \textit{impurities} 
or a \textit{doping profile}.
The doping profile comes from 
the difference of the number densities of positively charged donor ions
and negatively charged acceptor ions,
and the most typical examples are characteristic functions, step functions 
or Gaussian functions.
Equation \ef{eq:1.1} also appears as a stationary problem
for the Maxwell-Schr\"odinger system.
We refer to \cite{BF, CW, CW2} for the physical background
and the stability result of standing waves for the Maxwell-Schr\"odinger system.
In this case, the constant $e$ describes the strength of the interaction
between a particle and an external electromagnetic field.

The nonlinear Schr\"odinger-Poisson system with $\rho \equiv 0$:
\begin{equation} \label{eq:1.3}
\begin{cases}
& -\Delta u+\omega u+e \phi u=|u|^{p-1} u \\
& -\Delta \phi =\frac{e}{2} |u|^2
\end{cases} 
\quad \hbox{in} \ \R^3
\end{equation}
has been studied widely in the last two decades.
Especially, the existence of $L^2$-constraint minimizers 
depending on $p$ and the size of the mass,
the existence of ground state solutions of \ef{eq:1.3}
and their stability have been investigated in detail; 
see \cite{AP, BJL, BS1, BS2, CDSS, CW, HLW, JL, K2, R, SaS, SS, ZZ}
and references therein.
On the other hand, the nonlinear Schr\"odinger-Poisson system with
a doping profile is less studied.
In \cite{DeLeo, DR}, the corresponding 1D problem has been considered.
Moreover, the linear Schr\"odinger-Poisson system 
(that is, the problem \ef{eq:1.1} without $|u|^{p-1}u$)
with a doping profile in $\R^3$ has been studied in \cite{Ben1, Ben2}.
As far as we know, there is no literature 
concerning with \ef{eq:1.1} and the existence of stable standing waves,
which is exactly the motivation of this paper.

To state our main results, let us give some notation. 
For $u \in H^1(\R^3,\C)$, 
the energy functional associated with \ef{eq:1.1} is given by
\begin{align}
I(u) &= \mathcal{E}(u) + \frac{\omega}{2} \intR |u|^2 \,dx, \notag \\
\mathcal{E}(u) &=\frac{1}{2} \intR |\nabla u |^2 \,dx 
-\frac{1}{p+1} \intR |u|^{p+1} \,d x + e^2A(u). \label{eq:1.4}
\end{align}
Here we denote the nonlocal term by $S(u)=S_1(u)+S_2$ with
\[
\begin{aligned}
S_1(u)(x)&:=(-\Delta)^{-1} \left( \frac{|u|^2}{2} \right) 
=\frac{1}{8 \pi |x|} *|u|^2, \\
S_2(x)&:= (-\Delta)^{-1} \left( \frac{- \rho}{2} \right) 
= -\frac{1}{8 \pi |x|} * \rho(x),
\end{aligned}
\]
and the functional corresponding to the nonlocal term by
\[
A(u) 
:= \frac{1}{4} \intR S(u) \big( |u|^2-\rho(x) \big) \,dx
= \frac{1}{32\pi} \intR \intR
\frac{\big( |u(x)|^2-\rho(x) \big) \big( |u(y)|^2-\rho(y) \big)}{|x-y|} \,dx\,dy.
\]
For $\mu>0$, let us consider the minimization problem:
\begin{equation} \label{eq:1.5}
\mathcal{C}(\mu)= \inf_{ u \in B(\mu) } \mathcal{E}(u),
\end{equation}
where $B(\mu) = \{ u \in H^1(\R^3,\C) \ ; \ \| u \|_{L^2(\R^3)}^2=\mu \}$.
We also define the set of minimizers by 
\[
\mathcal{M}(\mu) := \{ e^{i\theta}u(x) \ ; \ 
\theta \in [0,2\pi), u \in B(\mu), \ 
\mathcal{E}(u) = \mathcal{C}(\mu) \}.
\]
In this setting, the constant $\omega$ in \ef{eq:1.1} 
appears as a Lagrange multiplier.

Let us define the energy associated with \ef{eq:1.3}:
\[
E_{\infty}(u):=
\frac{1}{2} \| \nabla u \|_2 ^2
-\frac{1}{p+1} \intR |u|^{p+1} \,dx
+\frac{e^2}{4} \intR S_1(u) |u|^2 \,dx.
\]
Indeed if we assume $\rho(x) \to 0$ as $|x| \to \infty$,
\ef{eq:1.3} can be seen as a problem at infinity. 
We define the minimum energy associated with \ef{eq:1.3} by
\begin{equation*} 
0 \ge c_{e, \infty}(\mu) = c_{\infty}(\mu) := \inf_{u \in B(\mu)} E_{\infty}(u).
\end{equation*}
The existence of minimizers for $c_{e,\infty}(\mu)$ has been studied widely
and is summarized as follows.

\begin{enumerate}
\item[\rm(i)] In the case $2<p< \frac{7}{3}$, 
$c_{e,\infty}(\mu)$ is attained if $c_{e,\infty}(\mu)<0$.
Moreover $c_{e,\infty}(\mu)<0$ when 
$\mu$ is large for fixed $e$ or $e$ is small for fixed $\mu$.

\item[\rm(ii)] In the case $p=2$, 
$c_{e,\infty}(\mu)$ is attained if and only if $c_{e,\infty}(\mu)<0$.
Moreover $c_{e,\infty}(\mu)<0$ when $e$ is small for fixed $\mu$.

\item[\rm(iii)] In the case $1<p<2$, 
$c_{e,\infty}(\mu)<0$ for all $\mu$ and $e$.
Moreover $c_{e,\infty}(\mu)$ is attained 
when $\mu$ is small for fixed $e$ or $e$ is small for fixed $\mu$.

\end{enumerate}
For the proof, we refer to \cite{BS1, BS2, CDSS, CW, HLW, JL, K2, SS}. 
Especially when $2<p<\frac{7}{3}$, for fixed $e>0$,
there exists $\mu^*=\mu^*(e)>0$ such that 
\begin{equation} \label{eq:1.6}
\mu^* = \inf \{ \mu > 0 \mid c_{e, \infty}(\mu) < 0 \}.
\end{equation}
More precise information of $\mu^*$ has been obtained in e.g. \cite{CW, JL, SS}. 
It is also worth pointing out that as $e$ decreases, 
$\mu^*$ increases as well. 

For the doping profile $\rho$, we assume that
\begin{equation} \label{eq:1.7}
\rho(x) \in L^{\frac{6}{5}}(\R^3) \cap L^q_{loc}(\R^3) \ \text{for some} \ q>3 
\quad \mbox{and} \quad x \cdot \nabla \rho(x) \in L^{\frac{6}{5}}(\R^3), 
\end{equation}
\begin{equation} \label{eq:1.8}
\rho(x) \ge 0, \ \not\equiv 0 \quad \text{for} \ x \in \R^3,
\end{equation} 
\begin{equation} \label{eq:1.9}
\text{there exist $\alpha>2$ and $C>0$ such that} \ 
\rho(x) \le \frac{C}{1+|x|^{\alpha}} \quad \text{for} \ x \in \R^3.
\end{equation}
\begin{equation} \label{ASS}
\inf_{u \in H^1(\R^3,\C), \| u \|_{\LT}^2=1} 
\IT \left( |\nabla u|^2 + 2e^2 S_2(x) |u|^2 \right) dx = 0.
\end{equation}

The assumption \ef{eq:1.9} enables us to show that $S_2 \in L^{\infty}(\R^3)$
and decays to $0$ at infinity, 
which makes us to treat $S_2(x)$ as a trapping potential.
Typical examples are the Gaussian function $\rho(x) =\ep e^{-\alpha |x|^2}$
and $\rho(x) =\frac{\ep}{(1+|x|)^{\alpha}}$ for $\alpha> \frac{5}{2}$. 
The assumption \ef{ASS} means that a trapping effect caused by the doping profile $e^2 S_2(x)$
is too weak so that the operator $-\Delta + 2e^2S_2(x)$ does not have an eigenvalue,
which may happen if $e>0$ is sufficiently small.
\ef{ASS} will be used to establish the strict sub-additivity of $\mathcal{C}(\mu)$.

Our first main result is the following.

\begin{theorem}[Existence of a minimizer]  \label{thm:1.1}
Assume \ef{eq:1.7}-\ef{ASS} and let $e>0$ be fixed.
Suppose that $2<p<\frac{7}{3}$ and let $\mu > 2 \cdot 2^{\frac{1}{2p-4}} \mu^*$ be given.
Then there exists $\rho_0= \rho_0(e, \mu)>0$ such that 
if $\|\rho\|_{L^{\frac{6}{5}}(\R^3)}
+\|x \cdot \nabla \rho\|_{L^{\frac{6}{5}}(\R^3)} \le \rho_0$,
the minimization problem \ef{eq:1.5} admits a minimizer $u_{\mu}$.

Moreover the associated Lagrange multiplier $\omega=\omega(\mu)$ is positive.

\end{theorem}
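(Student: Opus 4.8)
The plan is to solve \ef{eq:1.5} by a concentration–compactness argument, the whole difficulty being packed into the strict sub-additivity of $\mu \mapsto \mathcal{C}(\mu)$. I would begin with the algebraic identity obtained by expanding $A(u)$ and using the self-adjointness of the Riesz potential (namely $\IT S_1(u)\rho\,dx = -\IT S_2|u|^2\,dx$), which gives $\mathcal{E}(u) = E_\infty(u) + \tfrac{e^2}{2}\IT S_2|u|^2\,dx + \tfrac{e^2}{4}\IT(-S_2)\rho\,dx$, where the last term is a fixed positive constant (the ``self-energy'' of the doping profile). A second consequence of the expansion is $A(u) = \tfrac18\|(-\Delta)^{-1/2}(|u|^2-\rho)\|_{L^2(\R^3)}^2 \ge 0$, well defined precisely because $\rho \in L^{6/5}(\R^3)$ by \ef{eq:1.7}. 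Dropping $e^2A(u)\ge 0$ and invoking Gagliardo–Nirenberg with $2<p<\tfrac73$ (so that the exponent $\tfrac{3(p-1)}{2}$ on $\|\nabla u\|_2$ is $<2$) shows that $\mathcal{E}$ is bounded below and coercive on $B(\mu)$; hence every minimizing sequence $\{u_n\}$ is bounded in $H^1(\R^3,\C)$.

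I would then apply the concentration–compactness principle to $\{|u_n|^2\}$. Vanishing is ruled out by showing $\mathcal{C}(\mu)$ lies strictly below the baseline level $\tfrac{e^2}{4}\IT(-S_2)\rho\,dx$ (the value $\mathcal{E}(u_n)$ converges to under vanishing, since then $\|u_n\|_{p+1}\to 0$, $\IT S_1(u_n)|u_n|^2\,dx\to 0$, and $\IT S_2|u_n|^2\,dx\to 0$); equivalently $\inf_{B(\mu)}\{E_\infty(u)+\tfrac{e^2}{2}\IT S_2|u|^2\,dx\}<0$, which follows from $c_\infty(\mu)<0$ (guaranteed by $\mu>\mu^*$, recall \ef{eq:1.6}) by placing a near-optimizer of $c_\infty(\mu)$ far from $\mathrm{supp}\,\rho$ so that its $S_2$-contribution is negligible. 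Dichotomy is the genuine issue: because $\rho$ breaks translation invariance, a splitting sequence sheds a mass $\mu-\alpha$ to infinity, where it feels only $E_\infty$, so excluding dichotomy is equivalent to the strict sub-additivity $\mathcal{C}(\mu)<\mathcal{C}(\alpha)+c_\infty(\mu-\alpha)$ for every $0<\alpha<\mu$.

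I expect this strict sub-additivity to be the main obstacle. Here I would follow the scaling argument of \cite{ZZou}: a two-parameter rescaling $u\mapsto t^{a}u(t^{b}\cdot)$, tuned to adjust the $L^2$-mass, is used to compare $c_\infty$ at different masses and to extract a strict super-additivity relation for $c_\infty$; tracking the scaling exponents in the regime $2<p<\tfrac73$ is what forces the quantitative threshold $\mu>2\cdot 2^{\frac{1}{2p-4}}\mu^*$ (the factor $2$ reflecting the splitting of the mass and the requirement that each surviving piece still see a negative infinity-level). The attractiveness $S_2\le 0$ then strictly lowers the localized energy below $c_\infty$, giving the needed strictness, while the spectral assumption \ef{ASS} — i.e.\ $-\Delta+2e^2S_2\ge 0$ with no bound state — is exactly what prevents the linear potential well from producing a spurious low-mass concentration, so that the correct comparison problem at infinity remains $c_\infty$. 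The smallness $\|\rho\|_{L^{6/5}(\R^3)}+\|x\cdot\nabla\rho\|_{L^{6/5}(\R^3)}\le\rho_0$ is what keeps all $\rho$-dependent remainder terms in these estimates under control.

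With vanishing and dichotomy excluded, the compactness alternative yields $u_n\to u_\mu$ strongly in $L^2(\R^3)$, and then in $H^1(\R^3,\C)$ by the coercive lower bound, so $u_\mu\in B(\mu)$ attains $\mathcal{C}(\mu)$ and solves the Euler–Lagrange equation $-\Delta u_\mu+\omega u_\mu+e^2S(u_\mu)u_\mu=|u_\mu|^{p-1}u_\mu$ with $\omega$ the Lagrange multiplier. Finally, to obtain $\omega>0$ I would combine two identities: the Nehari identity $\|\nabla u_\mu\|_2^2+\omega\mu+e^2\IT S(u_\mu)|u_\mu|^2\,dx=\|u_\mu\|_{p+1}^{p+1}$ (testing the equation against $u_\mu$), and the Pohozaev identity coming from $\tfrac{d}{dt}\mathcal{E}(t^{3/2}u_\mu(t\cdot))\big|_{t=1}=0$, where the hypothesis $x\cdot\nabla\rho\in L^{6/5}(\R^3)$ in \ef{eq:1.7} makes the doping contributions integrable. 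Eliminating $\|u_\mu\|_{p+1}^{p+1}$ between the two and using $p>2$ together with the smallness of $\rho$ then yields $\omega\mu>0$, hence $\omega>0$.
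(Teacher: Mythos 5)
Your overall architecture — concentration--compactness with all the difficulty concentrated in strict sub-additivity, a Zhong--Zou scaling to prove it, and Nehari plus Pohozaev identities for the sign of $\omega$ — is the same as the paper's, and your coercivity step (via $A(u)\ge 0$) and exclusion of vanishing are fine. However, the core of your plan, the sub-additivity, has a genuine gap. The two-parameter scaling controlled by the smallness of $\|\rho\|_{\frac{6}{5}}+\|x\cdot\nabla\rho\|_{\frac{6}{5}}$ (the paper's Lemma \ref{lem:3.4}) only produces $c(\lambda s)<\lambda c(s)$ for retained masses $s\in\big[\tfrac{\mu}{2\cdot 2^{1/(2p-4)}},\mu\big]$: the argument is driven by the uniform negative bound $c(s)\le c\big(\tfrac{\mu}{2\cdot 2^{1/(2p-4)}}\big)<\tfrac12 c_{\infty}\big(\tfrac{\mu}{2\cdot 2^{1/(2p-4)}}\big)<0$, which is precisely where the threshold $\mu>2\cdot 2^{\frac{1}{2p-4}}\mu^*$ comes from, and which is unavailable when $s$ is small (for $s<\mu^*$ one has $c_\infty(s)=0$, and $c(s)$ may even vanish). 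Hence the dichotomy scenario in which only a \emph{small} mass $\mu'$ stays near the doping profile while the large mass escapes is not excluded by this scaling. Your appeal to \ef{ASS} as "preventing a spurious low-mass concentration" names the right enemy but supplies no mechanism. The paper needs a second, genuinely different lemma (Lemma \ref{lem:3-ex}): under \ef{ASS}, any $u$ with $E(u)\le 0$ satisfies $-2e^2A_2(u)\le\frac{1}{p+1}\|u\|_{p+1}^{p+1}$ (inequality \ef{eq:ex-1}), and then the cruder scaling $\lambda^2u(\lambda x)$ gives $c(\lambda s)<\lambda c(s)$ for all $\lambda\ge 2^{\frac{1}{2p-4}}$ whenever $c(s)<0$, with no smallness of $\rho$ and no lower bound on $s$; the residual case $c(\mu')=0$ is settled by the strict monotonicity of $c_\infty$ above $\mu^*$. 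Without this second argument (or a substitute), your exclusion of dichotomy is incomplete exactly in the regime that motivated the hypothesis \ef{ASS}.

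The final step also fails as written. Eliminating only $\|u_\mu\|_{p+1}^{p+1}$ between the Nehari identity and your constrained Pohozaev identity leaves the repulsive self-interaction $A_1(u_\mu)\ge 0$ with a strictly negative coefficient (it equals $\frac{2(p+1)}{3(p-1)}-4<0$ for $2<p<\frac73$), and $A_1$ is neither small nor controlled by $\rho$; so "$p>2$ plus smallness of $\rho$" cannot conclude $\omega\mu>0$. The paper instead eliminates \emph{both} $\|u\|_{p+1}^{p+1}$ and $A_1$, arriving at Lemma \ref{lem:2.3},
\begin{equation*}
(5p-7)E(u)=2(p-2)\|\nabla u\|_2^2-\frac{(3p-5)\omega}{2}\mu+8e^2A_2(u)-(3-p)e^2A_3(u),
\end{equation*}
and then feeds in the minimality information $E(u_\mu)=c(\mu)\le c_\infty(\mu)<0$: it is this strictly negative energy level, a quantity independent of $\rho$, that dominates after the $A_2$, $A_3$ terms are absorbed by Young's inequality into $2(p-2)\|\nabla u\|_2^2$. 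The use of the negative level $c_\infty(\mu)<0$ is absent from your sketch, and without it the identities alone do not determine the sign of $\omega$.
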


The assumption \ef{eq:1.7} rules out the case 
$\rho$ is a characteristic function supported on a bounded smooth domain.
Even in this case, we are still able to obtain the existence of minimizers
under a smallness condition on some geometric quantity related to the domain;
See Section 6.

The positivity of the Lagrange multiplier $\omega(\mu)$ 
will be useful to establish the relation between $L^2$-constraint minimizers
and ground state solutions,
which we leave for a future work.
We also refer to \cite{CW3, DST} for this direction.

The next result states the orbital stability of standing waves 
corresponding to minimizers.

\begin{theorem}[Orbital stability of standing wave] \label{thm:1.2}
Under the assumption of Theorem \ref{thm:1.1}, 
the minimizer set $\mathcal{M}(\mu)$ 
is orbitally stable in the following sense:
For every $\varepsilon>0$, 
there exists $\delta(\varepsilon)>0$ such that
if an initial value $\psi_{0}$ satisfies 
$\dis \inf_{u \in \mathcal{M}(\mu)} \| \psi_{0} - u \|_{H^1(\R^3)} < \delta$, 
then the corresponding solution $(\psi, \phi)$ of \ef{eq:1.2} satisfies
\begin{align*}
\sup_{t>0} \inf_{y \in \R^3} \inf_{ u \in \mathcal{M}(\mu) }
\left\{ \big\| \psi(t,\cdot) - u (\cdot +y) \big\|_{H^1(\R^3)}
+\left\| \phi(t,\cdot) - \frac{e}{2} (-\Delta)^{-1} 
( |u(\cdot +y)|^2 - \rho ) \right\|_{D^{1,2}(\R^3)}
\right\} < \varepsilon.
\end{align*}
\end{theorem}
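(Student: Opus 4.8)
The plan is to follow the classical variational route to orbital stability of Cazenave--Lions, reducing the statement to the compactness of minimizing sequences for $\mathcal{C}(\mu)$ together with the conservation laws of the flow. The first ingredient I would record is the Cauchy theory for \ef{eq:1.2}: since $p$ is energy-subcritical in dimension three and the power nonlinearity together with the Poisson coupling and the fixed doping term define a locally Lipschitz vector field on $H^1(\R^3,\C)$, the initial value problem \ef{eq:1.2} is locally well-posed in $H^1(\R^3,\C)$, and both the mass $\|\psi(t)\|_{\LT}^2$ and the energy $\mathcal{E}(\psi(t))$ are conserved along the flow. The conserved mass and energy control $\|\psi(t)\|_{\HT}$ a priori: the self-interaction part $\frac14\intR S_1(\psi)|\psi|^2$ of the nonlocal term is nonnegative, the doping cross-terms are of lower order by the Hardy--Littlewood--Sobolev inequality, and the power nonlinearity is absorbed into the kinetic energy through a Gagliardo--Nirenberg estimate because $p<\frac{7}{3}$. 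Hence the local solution is global and $\sup_{t>0}$ is meaningful.

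The heart of the matter is the compactness property furnished by the proof of Theorem \ref{thm:1.1}: under the hypotheses there, every minimizing sequence $(v_n)\subset B(\mu)$ for $\mathcal{C}(\mu)$ is relatively compact in $H^1(\R^3,\C)$ up to phase and translation, i.e. there exist $\theta_n\in[0,2\pi)$, $y_n\in\R^3$ and $u_\infty\in\mathcal{M}(\mu)$ such that, along a subsequence, $e^{i\theta_n}v_n(\cdot+y_n)\to u_\infty$ in $H^1(\R^3,\C)$. This is exactly the output of the concentration--compactness dichotomy combined with the strict sub-additivity of $\mu\mapsto\mathcal{C}(\mu)$ and the strict negativity $\mathcal{C}(\mu)<0$ that rule out dichotomy and vanishing; I would simply quote it. I would also record that $\mu\mapsto\mathcal{C}(\mu)$ is continuous, which is needed to treat initial data whose mass is only close to $\mu$.

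With these ingredients the stability is proved by contradiction. Suppose the conclusion fails for some $\varepsilon_0>0$; then there are initial data $\psi_0^n$ with $\inf_{u\in\mathcal{M}(\mu)}\|\psi_0^n-u\|_{\HT}\to0$, times $t_n>0$, and global solutions $(\psi_n,\phi_n)$ with
\[
\inf_{y\in\R^3}\inf_{u\in\mathcal{M}(\mu)}\Big\{\|\psi_n(t_n,\cdot)-u(\cdot+y)\|_{\HT}+\big\|\phi_n(t_n,\cdot)-\tfrac{e}{2}(-\Delta)^{-1}(|u(\cdot+y)|^2-\rho)\big\|_{D^{1,2}(\R^3)}\Big\}\ge\varepsilon_0.
\]
Set $w_n:=\psi_n(t_n,\cdot)$. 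Since $\psi_0^n$ converges in $H^1$ to $\mathcal{M}(\mu)\subset B(\mu)$, conservation of mass and energy give $\|w_n\|_{\LT}^2=\|\psi_0^n\|_{\LT}^2\to\mu$ and $\mathcal{E}(w_n)=\mathcal{E}(\psi_0^n)\to\mathcal{C}(\mu)$. Rescaling $w_n$ by $(\mu/\|w_n\|_{\LT}^2)^{1/2}\to1$ produces a sequence on $B(\mu)$ whose energy still tends to $\mathcal{C}(\mu)$ (using the $H^1$-boundedness of $(w_n)$ and continuity of $\mathcal{C}(\cdot)$), hence a genuine minimizing sequence for $\mathcal{C}(\mu)$. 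The compactness property then yields $\theta_n$, $y_n$ and $u_\infty\in\mathcal{M}(\mu)$ with $\|w_n-e^{-i\theta_n}u_\infty(\cdot-y_n)\|_{\HT}\to0$. Choosing $u=e^{-i\theta_n}u_\infty\in\mathcal{M}(\mu)$ and $y=-y_n$ in the infimum makes its $\psi$-part tend to $0$, contradicting the lower bound $\varepsilon_0$.

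It remains to handle the $\phi$-component for the same $u$ and $y$. Since $\phi_n(t_n,\cdot)=\frac{e}{2}(-\Delta)^{-1}(|w_n|^2-\rho)$ and $|u(\cdot+y)|^2=|u_\infty(\cdot-y_n)|^2$, the doping term $\rho$ cancels in the difference, leaving $\frac{e}{2}(-\Delta)^{-1}(|w_n|^2-|u_\infty(\cdot-y_n)|^2)$, whose $D^{1,2}(\R^3)$-norm is bounded by $C\big\||w_n|^2-|u_\infty(\cdot-y_n)|^2\big\|_{L^{6/5}(\R^3)}$ via the Hardy--Littlewood--Sobolev inequality; this tends to $0$ because $w_n\to e^{-i\theta_n}u_\infty(\cdot-y_n)$ in $H^1\hookrightarrow L^{12/5}$. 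Thus the full expression tends to $0$, completing the contradiction. The genuinely substantive input in this scheme is the compactness of minimizing sequences, which is already supplied by Theorem \ref{thm:1.1}; accordingly, the main obstacle I anticipate is not the stability argument itself but the global Cauchy theory for \ef{eq:1.2} in $H^1$ with the doping term, and in particular the a priori $H^1$ bound in the regime $\mathcal{C}(\mu)<0$.
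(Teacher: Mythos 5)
Your proposal is correct and follows essentially the same route as the paper: the Cazenave--Lions contradiction argument, using mass/energy conservation from the Cauchy theory (the paper's Section~4, via Strichartz estimates) to turn $\psi_j(t_j,\cdot)$ into an almost-minimizing sequence for $\mathcal{C}(\mu)$, and then invoking the compactness of such sequences (Lemma~\ref{lem:3.6} and Remark~\ref{rem:3.7}). The only cosmetic differences are that the paper disposes of the $\phi$-component at the outset (reducing the claim to the $\psi$-part, exactly by the Hardy--Littlewood--Sobolev/Sobolev bound you use at the end) and that the mass renormalization you perform explicitly is carried out inside the paper's Lemma~\ref{lem:3.6}.
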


Here $D^{1,2}(\R^3, \R)=\dot{H}(\R^3, \R)$ 
denotes the completion of $C_0^{\infty}(\R^3, \R)$ with respect to 
the norm: $\| u\|_{D^{1,2}(\R^3)}^2=\IT |\nabla u|^2 \,dx$.
As for the global well-posedness of the Cauchy problem for \ef{eq:1.2}, 
see Section 4 below.

\smallskip
Here we briefly explain our strategy and its difficulty.
The existence of $L^2$-constraint minimizer can be shown
by applying the concentration compactness principle. 
A key of the proof is to establish the \textit{strict sub-additivity}:
\begin{equation} \label{eq:1.10}
\mathcal{C}(\mu)< \mathcal{C}\left(\mu^{\prime}\right)
+c_{\infty} \left(\mu-\mu^{\prime}\right)
\quad \hbox{for all} \quad 0 < \mu^{\prime}<\mu.
\end{equation}
In the case $2<p< \frac{7}{3}$ and $\rho \equiv 0$,
\ef{eq:1.10} can be readily obtained by adapting a suitable scaling.
However this scaling does not work straightforwardly 
if a doping profile is present, because of the loss of spatial homogeneity. 
In order to overcome this difficulty, 
we perform a scaling argument inspired by \cite{ZZou}.
By imposing the smallness of $\rho$ and $x \cdot \nabla \rho$,
it is possible to prove \ef{eq:1.10} if $2<p< \frac{7}{3}$.

\begin{remark} \label{rem:1.3}
{\rm(i)} \ It is natural to expect that whenever $c_{e,\infty}(\mu)$ is attained
for given $\mu$ and $e$, 
then \ef{eq:1.5} admits a minimizer if 
$\| \rho \|_{L^{\frac{6}{5}}(\R^3)} 
+ \| x \cdot \nabla \rho \|_{L^{\frac{6}{5}}(\R^3)}$ is sufficiently small.
However if we fix $e>0$ and take small $\mu>0$, 
$c_{e,\infty}(\mu)=0$ and $c_{e, \infty}(\mu)$ does not have a minimizer.
To obtain the existence of a minimizer
by imposing the smallness of $\rho$, 
in the process of the proof of \ef{eq:1.10},
we need to construct a test function whose energy is negative uniformly in $\rho$
even for small $\mu>0$.
For the moment, we don't know whether such a good test function exists
for small $\mu>0$,
even though by \ef{eq:1.8}, the presence of $\rho$ decreases the energy level 
of the truncated energy $E$ defined in \ef{eq:2.4} below.
Furthermore, our proof of \ef{eq:1.10} fails when $1< p \le 2$;
See Remark \ref{rem:3.5} below.

{\rm(ii)} We need to establish that
\begin{equation} \label{eq:1.12}
\mathcal{C}( \lambda \mu) < \lambda \mathcal{C}(\mu) \quad \text{fora all} \ \mu >0
\end{equation}
to obtain \ef{eq:1.10}.
When $\rho \equiv 0$ and $2<p< \frac{7}{3}$, 
\ef{eq:1.12} can be easily shown by a scaling argument.
However since the presence of $\rho(x)$ loses this good scaling property,
a careful analysis is required.
We can find that \ef{eq:1.12} holds for any $\lambda >1$ if $\mu$ is large; see Lemma \ref{lem:3.4}.
On the other hand, it is not straightforward to prove (1.12) when $\mu>0$ is small.
The assumptions \ef{ASS} and $\mu > 2 \cdot 2^{\frac{1}{2p-4}} \mu^*$ are crucially used in Lemma \ref{lem:3-ex}
to prove \ef{eq:1.12} for small $\mu>0$.
Once \ef{eq:1.12} is established for small $\mu>0$, 
we are able to obtain \ef{eq:1.10} no matter how $\mu'>0$ is small.

Furthermore if $p=2$, \ef{eq:1.3} is scaling invariant and hence \ef{eq:1.12} cannot be expected.
In fact, the closer $p$ is to 2, the larger the mass required for \ef{eq:1.12} to hold. 

We expect that our argument can be applied to other non-autonomous $L^2$-constraint minimization problems.
\end{remark} 

When $\rho$ is a characteristic function, 
further consideration is required
because $\rho$ cannot be weakly differentiable.
In this case, a key of the proof is the \textit{sharp boundary trace inequality}
which was developed in \cite{A}.
Then by imposing a smallness condition of some geometric quantity
related to the support of $\rho$, 
we are able to obtain the existence of stable standing waves.

\smallskip
This paper is organized as follows.
In Section 2, we introduce several properties of the energy functional
and some lemmas which will be used in this paper.
We establish the existence of a minimizer and prove Theorem \ref{thm:1.1}
in Section 3.
Section 4 is devoted to the solvability of the Cauchy problem, while
the stability of standing waves will be investigated in Section 5.
In Section 6, we consider the case $\rho$ is a characteristic function
and present the existence of standing waves for this case.
Finally in Section 7, we finish this paper   
by providing a concluding remark and one open question. 

Hereafter in this paper, 
unless otherwise specified, we write $\| u\|_{L^{p}(\R^3)}=\| u\|_p$.

\section{Variational formulation and preliminaries}

The aim of this section is to prepare several properties of the energy functional
and present intermediate lemmas which will be used later on.

\subsection{Reduction to a single equation} \ 

First we observe that the energy functional defined in \ef{eq:1.4} 
actually corresponds to the system \ef{eq:1.1}.
Let us consider the functional of two variables,
which is associated with \ef{eq:1.1}:
\[
J(u, \phi)
:=\frac{1}{2} \intR \left\{ |\nabla u|^2 + \omega |u|^2
+e \phi \left( |u|^2-\rho(x) \right)
-|\nabla \phi|^2 \right\} dx -\frac{1}{p+1} \intR |u|^{p+1} \,dx,
\]
for $(u,\phi) \in H^1(\R^3,\C) \times D^{1,2}(\R^3,\R)$.
A direct computation shows that the identity
$\frac{\partial J}{\partial u}=0$ is equivalent to the first equation of \ef{eq:1.1}.
Moreover one finds that
\[
\frac{\partial J}{\partial \phi} \psi
=\intR \left\{ -\nabla \phi \cdot \nabla \psi
+\frac{e}{2} \left( |u|^2- \rho(x) \right) \psi \right\} dx
\quad \hbox{for all} \ \psi \in C_0^{\infty}(\R^3,\R),
\]
from which we deduce that
\begin{equation} \label{eq:2.1}
-\Delta \phi =\frac{e}{2} \left(|u|^2-\rho(x) \right).
\end{equation}
Since $|u|^2-\rho \in L^{\frac{6}{5}}(\R^3)=(L^6(\R^3))^*$, 
by arguing similarly as in \cite{BF}, 
the Poisson equation \ef{eq:2.1} has a unique solution:
\[
\phi = e S(u)
= \frac{e}{2}(-\Delta)^{-1} \left( |u|^2-\rho(x) \right)
\in D^{1,2}(\R^3,\R).
\]
Moreover multiplying \ef{eq:2.1} by $\phi$, we have
\[
\intR |\nabla \phi|^2 \,dx =\frac{e}{2} \intR \phi \left( |u|^2 - \rho(x) \right) dx.
\]
This implies that
\[
\begin{aligned}
J(u,eS(u))&= 
\frac{1}{2} \intR (|\nabla u|^2+\omega |u|^2) \,dx 
-\frac{1}{p+1} \intR |u|^{p+1} \,dx
+\frac{e^2}{4} \intR S(u) \left( |u|^2-\rho(x) \right) dx \\
&= I(u).
\end{aligned}
\]

\subsection{Decomposition of the energy} \ 

In this subsection, we rewrite the energy functional $\mathcal{E}$
in a more convenient way.
First we decompose $\mathcal{E}$ in the following way:
\[
\begin{aligned}
\mathcal{E}(u)
&= \frac{1}{2} \intR |\nabla u|^2 \,dx -\frac{1}{p+1} \intR |u|^{p+1} \,dx, \\
&\quad +\frac{e^2}{4} \intR S_1(u) |u|^2 \,dx 
+\frac{e^2}{4} \intR S_2 |u|^2 \,dx 
-\frac{e^2}{4} \intR S_1(u) \rho(x) \,dx 
-\frac{e^2}{4} \intR S_2 \rho(x) \,dx 
\end{aligned}
\]
and define four nonlocal terms:
\begin{align} 
A_1(u)&=\frac{1}{4} \intR S_1(u) |u|^2 \,dx, \notag \\
A_2(u) &= -\frac{1}{4} \intR S_1(u) \rho(x) \,dx, \label{eq:2.2} \\
A_{2'}(u) &=\frac{1}{4} \intR S_2 |u|^2 \,dx, \notag \\
A_0 &=-\frac{1}{4} \intR S_2 \rho(x) \,dx. \notag
\end{align}
Note that $A_0$ is independent of $u$.
One can also see that
\[
A_2(u)= -\frac{1}{32\pi} \intR \intR \frac{|u(y)|^2 \rho(x)}{|x-y|} \,dx \,dy
= A_{2'}(u).
\]
Then we are able to write $\mathcal{E}$ in the following form:
\[
\mathcal{E}(u)= \frac{1}{2} \intR |\nabla u|^2 \,dx
-\frac{1}{p+1} \intR |u|^{p+1} \,d x 
+e^2 A_1(u) + 2e^2 A_2(u) + e^2 A_0.
\]
Recalling that 
\[
S_1(u)(x)=(-\Delta)^{-1} \left(\frac{|u|^2}{2}\right) \geq 0, 
\]
we find that  
\begin{equation} \label{eq:2.3}
A_1(u) \ge 0 \quad \hbox{for all} \ u \in H^1(\R^3,\C).
\end{equation}

Now it is convenient to put 
\begin{align} \label{eq:2.4}
E(u) &:=\mathcal{E}(u)-e^2 A_0 \notag \\
&= \frac{1}{2} \| \nabla u \|_2^2 - \frac{1}{p+1} \| u \|_{p+1}^{p+1}
+ e^2 A_1(u) + 2e^2 A_2(u).
\end{align}
Since $A_0$ is independent of $u$, 
we have only to consider the minimization problem for $E$.

\subsection{Estimates of nonlocal terms} \ 

This subsection is devoted to present
estimates for the nonlocal terms of the functional $E$. 
For later use, let us define
\[
A_3(u) = \frac{1}{2} \intR S_1(u) x \cdot \nabla \rho (x) \,dx 
\]
which is well-defined for $u \in H^1(\R^3,\C)$.
Then we have the following.

\begin{lemma} \label{lem:2.1}

For any $u \in H^1(\R^3,\C)$, 
$S_1$, $A_1$, $A_2$ and $A_3$ satisfy the estimates: 
\begin{align}
\| S_1(u) \|_{6} 
&\le C\| \nabla S_1(u) \|_{2} 
\le C \| u\|_{{\frac{12}{5}}}^2
\le C \| u \|_{2}^{\frac{3}{2}} \| \nabla u \|_{2}^{\frac{1}{2}}, \notag \\
0 \le A_1(u) &\leq C \| S_1(u) \|_6 \| u \|_{\frac{12}{5}}^2
\le C \| u \|_{2}^3 \| \nabla u \|_{2}, \notag \\
|A_2(u)| &\le \frac{1}{4} \|S_1(u) \|_{6} \| \rho \|_{\frac{6}{5}} 
\le  C \| \rho \|_{\frac{6}{5}} \| u \|_{\frac{12}{5}}^2 
\le C \| \rho \|_{\frac{6}{5}} 
\| u \|_{2}^{\frac{3}{2}} \| \nabla u \|_{2} ^{\frac{1}{2}}, \notag \\
|A_3(u)| &\le \frac{1}{2} \| S_1(u) \|_{6} 
\| x \cdot \nabla \rho \|_{\frac{6}{5}}
\le C \| x \cdot \nabla \rho \|_{\frac{6}{5}}
\| u \|_{2}^{\frac{3}{2}} \| \nabla u \|_{2}^{\frac{1}{2}}. \notag
\end{align}

\end{lemma}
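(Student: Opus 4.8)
The plan is to derive all four estimates from three standard tools: the Sobolev inequality $\|f\|_6 \le C\|\nabla f\|_2$ on $D^{1,2}(\R^3)$, Hölder's inequality with the conjugate pair $(6,\tfrac{6}{5})$, and the Gagliardo--Nirenberg interpolation inequality in $\R^3$. The first task is to establish the chain of inequalities for $S_1(u)$ itself, since the bounds for $A_1$, $A_2$, $A_3$ will all reduce to it.

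For the $S_1$ chain, the leftmost inequality $\|S_1(u)\|_6 \le C\|\nabla S_1(u)\|_2$ is just the Sobolev embedding $D^{1,2}(\R^3)\hookrightarrow L^6(\R^3)$. For the middle inequality I would exploit the equation $-\Delta S_1(u)=\tfrac12|u|^2$: testing it against $S_1(u)$ and integrating by parts gives
\[
\|\nabla S_1(u)\|_2^2 = \frac{1}{2} \intR S_1(u)|u|^2 \,dx
\le \frac{1}{2}\|S_1(u)\|_6 \, \big\| |u|^2 \big\|_{\frac{6}{5}}
= \frac{1}{2}\|S_1(u)\|_6 \, \|u\|_{\frac{12}{5}}^2,
\]
where I used Hölder with $\tfrac16+\tfrac56=1$ and $\big\||u|^2\big\|_{6/5}=\|u\|_{12/5}^2$. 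Combining this with the Sobolev bound $\|S_1(u)\|_6\le C\|\nabla S_1(u)\|_2$ and cancelling one factor of $\|\nabla S_1(u)\|_2$ yields $\|\nabla S_1(u)\|_2 \le C\|u\|_{12/5}^2$. The rightmost inequality $\|u\|_{12/5}^2 \le C\|u\|_2^{3/2}\|\nabla u\|_2^{1/2}$ is then Gagliardo--Nirenberg: in dimension three one has $\|u\|_{12/5}\le C\|u\|_2^{3/4}\|\nabla u\|_2^{1/4}$ (interpolation exponent $\theta=\tfrac14$, since $\tfrac{5}{12}=\tfrac12-\tfrac{\theta}{3}$), and squaring gives the claim.

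With the $S_1$ chain in hand, the remaining three estimates follow by a single application of Hölder with the same $(6,\tfrac65)$ pairing. For $A_1$, positivity $A_1(u)\ge 0$ is immediate from $S_1(u)\ge 0$ and $|u|^2\ge 0$; Hölder gives $A_1(u)\le \tfrac14\|S_1(u)\|_6\|u\|_{12/5}^2$, and feeding in the $S_1$ chain twice—once to estimate $\|S_1(u)\|_6\le C\|u\|_{12/5}^2$ and once for $\|u\|_{12/5}^2$ itself—produces the final bound $C\|u\|_2^3\|\nabla u\|_2$. For $A_2$ and $A_3$ I would write $|A_2(u)|\le\tfrac14\|S_1(u)\|_6\|\rho\|_{6/5}$ and $|A_3(u)|\le\tfrac12\|S_1(u)\|_6\|x\cdot\nabla\rho\|_{6/5}$, using $\rho\ge0$ for the former, and then invoke the $S_1$ chain once more; here the hypotheses $\rho,\,x\cdot\nabla\rho\in L^{6/5}(\R^3)$ from \ef{eq:1.7} guarantee that the $L^{6/5}$ norms appearing are finite, so each functional is well-defined.

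I do not expect a genuine obstacle here. The only point requiring a little care is the self-improving step for $\|\nabla S_1(u)\|_2$, where one must apply the Sobolev inequality to $S_1(u)$ and divide out the common factor $\|\nabla S_1(u)\|_2$ rather than estimating $\|S_1(u)\|_6$ in isolation; everything else is routine bookkeeping in chaining Hölder, Sobolev, and Gagliardo--Nirenberg.
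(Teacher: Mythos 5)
Your proposal is correct and takes essentially the same route as the paper: the paper's proof simply cites \cite{R} for the chain of inequalities on $S_1(u)$ (which is established there by exactly the testing-and-cancellation argument you spell out for $-\Delta S_1(u)=\tfrac12|u|^2$) and then notes that the bounds for $A_1$, $A_2$, $A_3$ follow from the H\"older and Sobolev (Gagliardo--Nirenberg) inequalities, precisely as in your argument. There are no gaps; you have merely made explicit the details the paper outsources to the reference.
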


For the proof of the inequality on $S_1(u)$, we refer to \cite{R}.
The other estimates can be obtained by the H\"odler inequality
and the Sobolev inequality.

\begin{lemma} \label{lem:2.2}
Assume that $u_n \rightharpoonup u$ in $H^1(\R^3)$. Then it follows that
\begin{align*}
\lim_{n \to \infty} \left\{ A_1(u_n-u) -A_1(u_n)+A_1(u) \right\} =0, \\
\lim_{n \to \infty} \left\{ A_2(u_n-u) -A_2(u_n)+A_2(u) \right\} =0.
\end{align*}
Moreover if $u_n \to u$ in $L^{\frac{12}{5}}(\R^3)$, we also have
\[
\lim_{n \to \infty} A_1(u_n) = A_1(u) \quad \hbox{and} \quad
\lim_{n \to \infty} A_2(u_n) = A_2(u).
\]
\end{lemma}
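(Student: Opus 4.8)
The plan is to recast both nonlocal quantities through the symmetric Coulomb bilinear form
\[
D(f,g) := \intR \intR \frac{f(x)\,g(y)}{|x-y|}\,dx\,dy,
\]
so that $A_1(u)=\frac{1}{32\pi}D(|u|^2,|u|^2)$ and $A_2(u)=-\frac{1}{32\pi}D(|u|^2,\rho)$, and to exploit the Hardy--Littlewood--Sobolev inequality in the form $|D(f,g)|\le C\|f\|_{\frac{6}{5}}\|g\|_{\frac{6}{5}}$. Writing $v_n:=u_n-u$, the hypothesis $u_n\rightharpoonup u$ in $\HT$ gives $v_n\rightharpoonup 0$ in $\HT$, hence $v_n$ is bounded in $L^{\frac{12}{5}}(\R^3)$ and, by Rellich's theorem, $v_n\to 0$ strongly in $L^{\frac{12}{5}}(B_R)$ for every $R>0$. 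The starting point is the elementary pointwise identity $|u_n|^2=|v_n|^2+|u|^2+w_n$ with $w_n:=2\,\RE(v_n\bar u)$: inserting it into $D$ and using bilinearity, the $A_2$-difference collapses to $A_2(v_n)-A_2(u_n)+A_2(u)=-\frac{1}{32\pi}D(w_n,\rho)$, while the $A_1$-difference reduces (up to the factor $\frac{1}{32\pi}$) to the cross terms $2D(|v_n|^2,|u|^2)+2D(|v_n|^2,w_n)+2D(|u|^2,w_n)+D(w_n,w_n)$.

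Everything then rests on two convergences. First I would prove $\|w_n\|_{\frac{6}{5}}\to 0$: since $|w_n|\le 2|v_n|\,|u|$, a split into $B_R$ and its complement gives $\|w_n\|_{L^{\frac{6}{5}}(B_R)}\le 2\|v_n\|_{L^{\frac{12}{5}}(B_R)}\|u\|_{\frac{12}{5}}\to 0$ by local strong convergence, and $\|w_n\|_{L^{\frac{6}{5}}(B_R^c)}\le 2\|v_n\|_{\frac{12}{5}}\|u\|_{L^{\frac{12}{5}}(B_R^c)}$, the latter being small for large $R$ uniformly in $n$ because $u\in L^{\frac{12}{5}}(\R^3)$. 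Once this is in hand, HLS immediately yields $D(w_n,\rho)\to 0$ (as $\rho\in L^{\frac{6}{5}}$), together with $D(|v_n|^2,w_n)\to 0$, $D(|u|^2,w_n)\to 0$ and $D(w_n,w_n)\to 0$, the remaining factors being bounded in $L^{\frac{6}{5}}$. This already settles the $A_2$-statement and all but one term of the $A_1$-statement.

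The remaining, and genuinely essential, term is $D(|v_n|^2,|u|^2)\to 0$; this is the only place where a bare HLS bound is insufficient, since it yields merely boundedness, and where one must use that no mass escapes to infinity. Rewriting $D(|v_n|^2,|u|^2)=8\pi\intR |v_n|^2 S_1(u)\,dx$ with $S_1(u)\in L^6(\R^3)$, I would again split into $B_R$ and $B_R^c$: on $B_R$ one pairs $\||v_n|^2\|_{L^{\frac{6}{5}}(B_R)}=\|v_n\|_{L^{\frac{12}{5}}(B_R)}^2\to 0$ against $\|S_1(u)\|_6$, while on $B_R^c$ one bounds $\||v_n|^2\|_{\frac{6}{5}}\|S_1(u)\|_{L^6(B_R^c)}$, the tail of the fixed $L^6$-function $S_1(u)$ being small. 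Letting $n\to\infty$ and then $R\to\infty$ gives the claim; this tail argument is the main obstacle, everything else being bookkeeping.

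Finally, for the continuity statement I would use bilinearity once more, writing $A_1(u_n)-A_1(u)=\frac{1}{32\pi}\big(D(|u_n|^2-|u|^2,|u_n|^2)+D(|u|^2,|u_n|^2-|u|^2)\big)$ and $A_2(u_n)-A_2(u)=-\frac{1}{32\pi}D(|u_n|^2-|u|^2,\rho)$, and observing that $\||u_n|^2-|u|^2\|_{\frac{6}{5}}\le \|u_n-u\|_{\frac{12}{5}}\big(\|u_n\|_{\frac{12}{5}}+\|u\|_{\frac{12}{5}}\big)\to 0$ whenever $u_n\to u$ in $L^{\frac{12}{5}}(\R^3)$; HLS then closes the argument, since the other arguments of $D$ stay bounded in $L^{\frac{6}{5}}$.
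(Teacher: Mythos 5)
Your proof is correct, but it is worth pointing out that the paper itself does not carry out this argument: for the $A_1$ splitting it simply cites \cite[Lemma 2.2]{ZZ}, and for $A_2$ it remarks that the same argument works because $\rho \in L^{\frac{6}{5}}(\R^3)=(L^6(\R^3))^*$. What you have written is a complete, self-contained version of that outsourced proof, and it follows the same underlying mechanism as the cited lemma: expand $|u_n|^2=|v_n|^2+|u|^2+w_n$ with $w_n=2\RE(v_n\bar u)$ inside the Coulomb bilinear form $D$, kill every term containing $w_n$ via Hardy--Littlewood--Sobolev once $\|w_n\|_{\frac{6}{5}}\to 0$ (which you obtain from Rellich on balls plus the tail of the fixed function $u\in L^{\frac{12}{5}}(\R^3)$), and then handle the one genuinely delicate term $D(|v_n|^2,|u|^2)$ --- where HLS alone gives only boundedness --- by pairing $|v_n|^2$ against the fixed function $S_1(u)\in L^6(\R^3)$ and splitting into $B_R$ (local compactness) and $B_R^c$ (smallness of the $L^6$ tail). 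This last step is exactly where weak $H^1$ convergence enters, and you isolate it correctly; an equivalent way to phrase it is that $|v_n|^2\rightharpoonup 0$ weakly in $L^{\frac{6}{5}}(\R^3)$ while $S_1(u)$ lies in the dual space $L^6(\R^3)$, which is precisely the duality the paper invokes for the $A_2$ case. Two harmless slips: by my computation the $A_2$ difference collapses to $+\frac{1}{32\pi}D(w_n,\rho)$ rather than $-\frac{1}{32\pi}D(w_n,\rho)$, and similarly the $A_1$ cross terms enter with the opposite sign to the one you wrote; since you only need these quantities to vanish, the signs are immaterial. You also correctly use only strong $L^{\frac{12}{5}}$ convergence (plus the resulting boundedness) in the final continuity statement. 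In short, your write-up buys independence from the reference \cite{ZZ}, at the cost of the brevity the paper achieves by citation.
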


\begin{proof}
The proof for $A_1$ can be found in \cite[Lemma 2.2]{ZZ}.
Since $\rho \in L^{\frac{6}{5}}(\R^3)=(L^6(\R^3))^*$, 
the property for $A_2$ can be established in a similar way.
\end{proof}

\subsection{Scaling properties} \

In this subsection, we collect scaling properties of the nonlocal terms $A_1$ and $A_2$. 
For $a$, $b \in \R$ and $\lambda >0$, 
let us adapt the scaling $u_{\lambda} (x) := \lambda^a u\left( \lambda^b x\right)$. 
We first recall that 
\[
S_1(u)(x)  =(-\Delta)^{-1} \left(\frac{|u(x)|^2}{2} \right) 
=\frac{1}{8 \pi} \intR \frac{|u(y)|^2}{|x-y|} \,dy.
\]
Putting $y=\lambda^{-b} z$, we have 
\[
\begin{aligned}
S_1(u_{\lambda})(x)
&=\frac{1}{8 \pi } \intR \frac{|u_{\lambda}(y)|^2}{| x-y|} \,d y 
=\frac{\lambda^{2a}}{8 \pi } 
\intR \frac{|u(\lambda^b y)|^2}{|x- y|} \,dy \\ 
&= \frac{\lambda^{2a+b}}{8 \pi} \intR 
\frac{|u(\lambda^b y)|^2}{| \lambda^b x- \lambda^b y|} \,dy \\
&= \frac{\lambda^{2a-2b}}{8\pi} \intR 
\frac{|u(z)|^2}{|\lambda^b x -z|} \,dz. 
\end{aligned}
\]
Thus one finds that
\begin{align}
S_1(u_{\lambda})(x) &= \lambda^{2 a -2 b} S_1(u) ( \lambda^{b} x), \nonumber \\
A_1(u_\lambda) &=\lambda^{4 a-5 b} A_1(u),  \label{eq:2.5} \\
A_2( u_\lambda) 
& =-\frac{1}{4} \intR S_1(u_{\lambda}) \rho(x) \,d x 
=- \frac{\lambda^{2 a-2 b}}{4} \intR S_1(u)(\lambda^{b} x) 
\rho( x) \,dx \nonumber \\
&=- \frac{\lambda^{2 a-5 b}}{4} \intR S_1(u) \rho\left(\lambda^{-b} x\right) \,d x.
\label{eq:2.6}
\end{align}
By the H\"older inequality, it follows that
\begin{equation} \label{eq:2.7}
|A_2(u_{\lambda})| \le \frac{\lambda^{2a-5b}}{4} 
\| S_1(u)\|_6 \| \rho( \lambda^{-b} \cdot ) \|_{\frac{6}{5}}
\le C \lambda^{2a-\frac{5}{2}b} \| \rho \|_{\frac{6}{5}}
\| u \|_2^{\frac{3}{2}} \| \nabla u \|_2^{\frac{1}{2}}.
\end{equation}

\subsection{Nehari and Pohozaev identities} \ 

This subsection is devoted to establish the
Nehari identity and the Pohozaev identity associated with \ef{eq:1.1}.
First we observe that
\[
\begin{aligned}
I^{\prime}(u) \varphi
&=\intR \nabla u \cdot \nabla \bar{\varphi} \,dx
+ \omega \intR u \bar{\varphi} \,dx -\int |u|^{p-2}u \bar{\varphi} \,dx \\
&\quad +\frac{e^2}{4} \intR S^{\prime}(u) \varphi \left( |u|^2-\rho(x) \right) dx 
+\frac{e^2}{2} \intR S(u) u \bar{\varphi} \,dx,
\end{aligned}
\]
for any $\varphi \in H^1(\R^3, \C)$.
The definition $S(u)=S_1(u)+S_2$ shows that $S^{\prime}(u)= S_1'(u)$,
and moreover
\[
S_1^{\prime}(u) \varphi=(-\Delta)^{-1} *(u \bar{\varphi}).
\]
This yields that 
\[
S^{\prime}(u) u=S_1^{\prime}(u) u =(-\Delta)^{-1} *  |u|^2 =2 S_1(u)
= 2S(u) - 2S_2
\]
and hence 
\[
\begin{aligned}
&\frac{e^2}{4} \intR S'(u) u  \left( |u|^2- \rho(x) \right) dx
+\frac{e^2}{2} \intR S(u) |u|^2 \,dx \\
& =\frac{e^2}{2} \intR (S(u)-S_2) \left( |u|^2-\rho(x) \right) dx
+\frac{e^2}{2} \intR S(u) |u|^2 \,dx \\
& = e^2 \intR S(u) \left( |u|^2-\rho(x)\right) dx
-\frac{e^2}{2} \intR S_2 \left( |u|^2- \rho(x) \right) dx 
+\frac{e^2}{2} \int_R S(u) \rho(x) \,dx \\
& =e^2 \intR S(u) \left( |u|^2-\rho(x) \right) dx
+\frac{e^2}{2} \intR \left\{ S_1(u) \rho(x) -S_2 |u|^2+2 S_2 \rho(x) \right\} dx \\
& =e^2 \intR S(u) \left( |u|^2-\rho(x) \right) dx
+ e^2 \intR S_2 \rho(x) \,dx.
\end{aligned}
\]
Here we used the fact $\intR S_1(u) \rho \,dx = - \intR S_2 |u|^2 \,dx$.
Thus we find that the Nehari identity corresponding to \ef{eq:1.1} is given by
\begin{align} \label{eq:2.8}
0 &=\intR \left\{ |\nabla u|^2 +\omega |u|^2 -|u|^{p+1}
+e^2 S(u)\left(|u|^2-\rho(x)\right)
 +e^2 S_2 \rho(x) \right\} dx \notag \\
&= \| \nabla u \|_2^2 + \omega \| u \|_2^2 - \| u \|_{p+1}^{p+1} 
+4 e^2 A_1(u) + 4 e^2 A_2(u).
\end{align}

Next, we show that the Pohozaev identity associated with \ef{eq:1.1} is 
described as 
\begin{align} \label{eq:2.9}
0 &=\int_{\mathbb{R}^3}
\left\{\frac{1}{2}|\nabla u|^2+\frac{3\omega}{2} |u|^2
-\frac{3}{p+1} |u|^{p+1} 
+\frac{5 e^2}{4} S(u)\left(|u|^2-\rho(x) \right)
-\frac{e^2}{2} S(u) x \cdot \nabla \rho(x) 
\right\} dx \notag \\
&= \frac{1}{2} \| \nabla u \|_2^2 + \frac{3\omega}{2} \| u \|_2^2
- \frac{3}{p+1} \| u \|_{p+1}^{p+1} 
+5e^2 A_1(u) + 10 e^2 A_2(u) - e^2 A_3(u).
\end{align}
First we derive \ef{eq:2.9} by a formal calculation.
Let us consider $u_\lambda(x) =u\left(\frac{x}{\lambda}\right)$,
that is, take $a=0$ and $b=-1$. 
Then from \ef{eq:2.5} and \ef{eq:2.6}, one has
\[
\begin{aligned}
I(u_\lambda) &=
\frac{1}{2} \|\nabla u_{\lambda} \|_{2}^2
+\frac{\omega}{2} \| u_{\lambda} \|_{2}^2
-\frac{1}{p+1} \|u_\lambda \|_{p+1}^{p+1}
+e^2 A_1(u_{\lambda})+2 e^2 A_2( u_{\lambda})
+e^2 A_0 \\
&= \frac{\lambda}{2}\| \nabla u\|_{2}^2
+\frac{\lambda^3\omega }{2} \| u\|_{2}^2
-\frac{\lambda^3}{p+1}\|u\|_{p+1}^{p+1}
+e^2 \lambda^5 A_1(u)
-\frac{\lambda^5 e^2}{2} \int_{\R^3} S_1(u) \rho\left(\lambda x\right) \,dx
+e^2 A_0.
\end{aligned}
\]
Now we suppose that $u$ is a solution of \ef{eq:1.1}.
Since $A=A_1+2 A_2+A_0$, it follows that
\begin{align} \label{eq:2.10}
0&=\frac{d}{d \lambda} I(u_\lambda) \Big|_{\lambda=1} \\
&= \frac{1}{2}\|\nabla u \|_2^2
+\frac{3 \omega}{2}\| u\|_2^2
-\frac{3}{p+1}\|u\|_{p+1}^{p+1}
 +5 e^2 A_1(u) 
+ 10 e^2 A_2(u)
-\frac{e^2}{2} \int_{\mathbb{R}^3} S_1(u) x \cdot \nabla \rho(x) \,d x \notag \\
&=\frac{1}{2} \| \nabla u \|_2^2 
+\frac{3 \omega}{2}\|u\|_2^2
-\frac{3}{p+1}\| u\|\|_{p+1}^{p+1} 
+5 e^2 A(u) -5 e^2 A_0 \notag \\
&\quad -\frac{e^2}{2} \intR S(u) x \cdot \nabla \rho(x) \,d x
+\frac{e^2}{2} \intR S_2 x \cdot \nabla \rho(x) \,d x. \notag
\end{align}
We put 
\[
\begin{aligned}
R
&= -5 e^2 A_0 +\frac{e^2}{2} \intR S_2(x) x \cdot \nabla \rho(x) \,d x \\
&= \frac{5 e^2}{4} \intR S_2(x) \rho(x) \,d x
+\frac{e^2}{2} \intR S_2(x) x \cdot \nabla \rho (x) \,dx.
\end{aligned}
\]
Recalling that 
\[
-\Delta S_2=\frac{-\rho(x)}{2} \quad \hbox{and} \quad 
\intR \left| \nabla S_2 \right|^2 dx =- \frac{1}{2} \intR S_2 \rho(x) \,d x,
\]
one finds that
\[
\begin{aligned}
\intR S_2 x \cdot \nabla \rho \,d x 
&= -\intR \nabla S_2 \cdot x \rho \,d x -3 \intR S_2 \rho \,d x \\
&= -2 \intR \nabla S_2 \cdot x \Delta S_2 \,d x -3 \intR S_2 \rho \,d x \\
&= 2 \intR \nabla\left(\nabla S_2 \cdot x\right) \cdot \nabla S_2 \,d x
-3 \intR S_2 \rho \,d x \\
&= 2 \intR \left|\nabla S_2\right|^2 \,d x
+2 \intR x \cdot \nabla\left(\frac{1}{2}\left|\nabla S_2\right|^2\right) \,d x
-3 \intR S_2 \rho \,d x \\
&= 2 \intR \left|\nabla S_2\right|^2 \,d x
-3 \intR\left|\nabla S_2\right|^2 \,d x -3 \intR S_2 \rho \,d x \\
&= -\intR \left|\nabla S_2\right|^2 \,d x-3 \intR S_2 \rho \,d x 
= -\frac{5}{2} \intR S_2 \rho d x.
\end{aligned}
\]
This means that $R=0$.
Thus from \ef{eq:2.10}, we obtain \ef{eq:2.9}.

A rigorous proof can be done by establishing the $C^{1,\alpha}$-regularity of 
any weak solution of \ef{eq:1.1} for some $\alpha \in (0,1)$. 
Note that since $\rho \in L^q_{loc}(\R^3)$ for some $q>3$, 
it follows by the elliptic regularity theory that $S_2 \in W^{2,q}_{loc}(\R^3)
\hookrightarrow C^{1,\alpha}_{loc}(\R^3)$. 
The smoothness of $u$ can be shown similarly by applying the elliptic regularity theory.
Then multiplying $x \cdot \nabla \bar{u}$ and $e x \cdot \nabla S(u)$ by \ef{eq:1.1} 
respectively, 
integrating over $B_R(0)$ and passing to a limit $R \to \infty$,
we are able to prove \ef{eq:2.9} as in \cite{BL, Ca}.

\begin{lemma} \label{lem:2.3}
Any nontrivial solution $u$ of \ef{eq:1.1} satisfies the following identity.
\begin{align*}
(5 p-7) E(u) &= 
2(p-2) \|\nabla u\|_2^2 -\frac{(3 p-5) \omega}{2} \| u\|_2^2 
+ 8 e^2 A_2(u) - (3-p) e^2 A_3(u).
\end{align*}

\end{lemma}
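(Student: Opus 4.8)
The statement is a purely linear-algebraic consequence of three relations already at our disposal: the definition \ef{eq:2.4} of $E$, the Nehari identity \ef{eq:2.8}, and the Pohozaev identity \ef{eq:2.9}. Each of these is a linear expression in the same six scalars $\|\nabla u\|_2^2$, $\|u\|_2^2$, $\|u\|_{p+1}^{p+1}$, $A_1(u)$, $A_2(u)$ and $A_3(u)$. Since the target identity involves only $\|\nabla u\|_2^2$, $\|u\|_2^2$, $A_2(u)$ and $A_3(u)$, the plan is to eliminate the two quantities $\|u\|_{p+1}^{p+1}$ and $A_1(u)$; and we have exactly the two vanishing relations \ef{eq:2.8} and \ef{eq:2.9} available to do so.

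Concretely, I would form the combination
\[
(5p-7)E(u) - \alpha \cdot (\text{Nehari}) - \beta \cdot (\text{Pohozaev}),
\]
where $(\text{Nehari})$ and $(\text{Pohozaev})$ denote the right-hand sides of \ef{eq:2.8} and \ef{eq:2.9}, both equal to $0$ for any solution $u$. Requiring that the coefficient of $\|u\|_{p+1}^{p+1}$ and the coefficient of $e^2 A_1(u)$ simultaneously vanish produces the two linear equations $4\alpha + 5\beta = 5p-7$ and $(p+1)\alpha + 3\beta = 5p-7$. Solving this $2\times 2$ system gives $\alpha = 2$ and $\beta = p-3$.

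It then remains to read off the surviving coefficients with these values. The coefficient of $\|\nabla u\|_2^2$ becomes $\tfrac{5p-7}{2} - 2 - \tfrac{p-3}{2} = 2(p-2)$; that of $\omega\|u\|_2^2$ becomes $-2 - \tfrac{3(p-3)}{2} = -\tfrac{3p-5}{2}$; that of $e^2 A_2(u)$ becomes $2(5p-7) - 8 - 10(p-3) = 8$; and that of $e^2 A_3(u)$ is simply $\beta = p-3 = -(3-p)$, since $A_3$ appears only in \ef{eq:2.9} with coefficient $-e^2$. Because the Nehari and Pohozaev brackets each vanish, this yields precisely the asserted identity.

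There is no genuine obstacle in this argument; the only point demanding care is faithful bookkeeping of the numerical coefficients and the $e^2$ factors across \ef{eq:2.4}, \ef{eq:2.8} and \ef{eq:2.9}, together with the identification $A_2 = A_{2'}$ recorded after \ef{eq:2.2}. I note finally that the derivation never divides by $5p-7$, so the identity holds for every $p$, even though in our range $2 < p < \tfrac{7}{3}$ one has $5p-7 > 0$.
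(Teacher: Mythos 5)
Your proposal is correct and is essentially the paper's own argument: the paper likewise eliminates $\|u\|_{p+1}^{p+1}$ and $e^2A_1(u)$ from $(5p-7)E(u)$ using the Nehari identity \ef{eq:2.8} and the Pohozaev identity \ef{eq:2.9} (it solves those two identities for these quantities and substitutes, which is the same linear elimination you perform with the multipliers $\alpha=2$, $\beta=p-3$), and all of your surviving coefficients match. The only blemish is cosmetic: the coefficient of $\|u\|_{p+1}^{p+1}$ actually yields $(p+1)\alpha+3\beta=5p-7$ and that of $e^2A_1(u)$ yields $4\alpha+5\beta=5p-7$, i.e.\ the pairing is swapped relative to your wording, but the system and its solution are unchanged.
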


\begin{proof}
From \ef{eq:2.8} and \ef{eq:2.9}, we find that
\[
\begin{aligned}
\frac{5 p-7}{p+1}\| u \|_{p+1}^{p+1}
&=3\|\nabla u\|_2^2 - \omega \| u\|_2^2 
-20 e^2 A_2(u) + 4e^2 A_3(u), \\
(5 p-7) e^2 A_1(u)
&=\frac{5-p}{2}\|\nabla u \|_2^2 -\frac{3(p-1) \omega}{2}\| u \|_2^2 
-2(5p-1)e^2 A_2(u) + (p+1)e^2 A_3(u).
\end{aligned}
\]
Thus one deduces that
\begin{align*}
(5 p-7) E(u)
&=\frac{5 p-7}{2}\|\nabla u\|_2^2 
-\frac{5 p-7}{p+1}\| u\|_{p+1}^{p+1} 
+(5p-7)e^2 A_1(u) + 2(5p-7) e^2 A_2(u) \\
&=2(p-2)\|\nabla u \|_2^2 -\frac{(3 p-5) \omega}{2} \| u \|_2^2
+8 e^2 A_2(u) - (3-p) e^2 A_3(u).
\end{align*}
This ends the proof.
\end{proof}

\subsection{Decay of $S_2(x)$ and $A_2$} \ 

In this subsection, we prove that the nonlocal term 
$S_2(x)= (-\Delta)^{-1}\left( \frac{- \rho}{2} \right)$
decays at infinity under the assumption \ef{eq:1.9}.

\begin{lemma}  \label{lem:2.4}
Assume \ef{eq:1.9}. 
Then it  holds that 
\begin{equation} \label{eq:2.11}
\lim_{|x| \to \infty} S_2(x) =0,
\end{equation}

\end{lemma}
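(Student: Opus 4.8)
The plan is to work directly with the integral representation
\[
S_2(x) = -\frac{1}{8\pi}\int_{\R^3}\frac{\rho(y)}{|x-y|}\,dy,
\]
and to show that it tends to $0$ as $R := |x| \to \infty$ by splitting $\R^3$ into three regions adapted to the two sources of trouble: the singularity of the Newtonian kernel at $y=x$, and the mass of $\rho$ (which need not be integrable) near the origin and in the far field. Concretely, I would decompose $\R^3 = E_1 \cup E_2 \cup E_3$ with $E_1 = \{|y-x| < R/2\}$, $E_2 = \{|y-x| \ge R/2\} \cap \{|y| \le 2R\}$, and $E_3 = \{|y| > 2R\}$ (note these are disjoint and cover $\R^3$, since $|y-x|<R/2$ forces $|y|<3R/2<2R$), estimating each piece separately and using \ef{eq:1.9} in the form $\rho(y) \le C|y|^{-\alpha}$ for large $|y|$.

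On $E_1$ every point satisfies $|y| \ge |x| - |x-y| > R/2$, so $\rho$ is already small there, $\rho(y) \le C(R/2)^{-\alpha}$; pulling this bound out of the integral and integrating the locally integrable kernel over the ball $\{|z|<R/2\}$ gives a bound of order $R^{-\alpha}\cdot R^{2}=R^{2-\alpha}$, which vanishes since $\alpha>2$. On $E_2$ the kernel is bounded by $2/R$, and the remaining integral $\int_{|y|\le 2R}\rho\,dy$ is controlled either by H\"older's inequality against the characteristic function of $B_{2R}$ (using $\rho \in L^{\frac{6}{5}}(\R^3)$ from \ef{eq:1.7}, which yields $\lesssim R^{1/2}$ and hence a contribution $\lesssim R^{-1/2}$) or directly from the volume growth of the pointwise decay bound; in all cases the $E_2$ term tends to $0$.

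The delicate piece is $E_3$, the far field, which is the main obstacle precisely when the decay of $\rho$ is slow, i.e. $2<\alpha\le 3$, since then $\rho \notin L^1(\R^3)$ and one cannot bound the tail by the total mass. The resolution is to exploit the extra decay supplied by the kernel: on $E_3$ we have $|y|>2R\ge 2|x|$, hence $|x-y|\ge |y|/2$, so
\[
\frac{\rho(y)}{|x-y|}\le \frac{2C}{|y|^{\alpha+1}},
\]
and since $\alpha+1>3$ this is integrable over $\{|y|>2R\}$ with integral of order $R^{2-\alpha}\to 0$. Combining the three estimates gives $S_2(x)\to 0$ as $|x|\to\infty$, proving \ef{eq:2.11}. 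I would also remark that the very same decomposition, carried out at a fixed (large but finite) scale $R$, shows that $S_2$ is bounded, recovering the property $S_2 \in L^{\infty}(\R^3)$ announced after \ef{eq:1.9}.
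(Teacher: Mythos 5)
Your proof is correct: the three sets $E_1,E_2,E_3$ do cover $\R^3$, and each of the three estimates is valid, giving $S_2(x)\to 0$ as $|x|\to\infty$ under \ef{eq:1.9} alone (provided you use your second, ``volume growth'' option on $E_2$). The paper follows the same skeleton --- a three-region splitting separating the kernel singularity, the bounded region, and the far field --- but implements it differently. The paper first fixes an auxiliary exponent $q$ with $\max\{1,\tfrac{3}{\alpha}\}<q<\tfrac{3}{2}$, so that \ef{eq:1.9} gives $\rho\in L^q(\R^3)$ and $q'>3$, and then uses H\"older twice: on $\{|y|\le\tfrac12|x|\}$ (kernel bounded by $2/|x|$, mass controlled by $\|\rho\|_{L^q}$ times a volume factor) and on $\{|y|\ge\tfrac12|x|,\ |x-y|\ge\tfrac12|x|\}$ (where $q'>3$ makes $|x-y|^{-q'}$ integrable at infinity); only the middle region $\{|y|\ge\tfrac12|x|,\ |x-y|\le\tfrac12|x|\}$ is handled pointwise, exactly as your $E_1$. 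Your treatment of the far field is more elementary: the observation $|x-y|\ge|y|/2$ on $\{|y|>2R\}$ upgrades the integrand to $\lesssim|y|^{-\alpha-1}$, which is integrable since $\alpha+1>3$, so no H\"older and no auxiliary exponent are needed there. Two remarks. First, your primary route on $E_2$ invokes $\rho\in L^{6/5}(\R^3)$ from \ef{eq:1.7}, which the lemma does not formally assume (and which \ef{eq:1.9} implies only when $\alpha>\tfrac52$); your fallback via the pointwise bound, giving $R^{-1}\max\{R^{3-\alpha},\log R,1\}\to 0$, is therefore not cosmetic but what keeps the proof self-contained under \ef{eq:1.9}. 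Second, the trade-off between the two arguments: the paper's H\"older version yields decay that is uniform over families of $\rho$ bounded in $L^q$, while yours produces explicit power rates ($R^{2-\alpha}$ on $E_1$ and $E_3$) directly in terms of the constant $C$ in \ef{eq:1.9}; both are perfectly adequate for \ef{eq:2.11}, and your closing remark that the same estimates give $S_2\in L^\infty(\R^3)$ is consistent with how the paper uses the lemma afterwards.
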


\begin{proof}
For a fixed $x \in \R^3$, it follows by the definition of $S_2$ that
\[
|S_2(x)| \le \frac{1}{8 \pi} \intR \frac{| \rho(y)|}{|x-y|} \,dy
= \frac{1}{8 \pi} \int_{|y| \le \frac{1}{2}|x|} \frac{| \rho(y)|}{|x-y|} \,dy
+\frac{1}{8 \pi} \int_{|y| \ge \frac{1}{2}|x|} \frac{| \rho(y)|}{|x-y|} \,dy.
\]
By the assumption \ef{eq:1.9} and the fact $\alpha>2$,
one can take $q$ so that $\rho \in L^q(\R^3)$ and
$\max \{ 1, \frac{3}{\alpha} \} < q < \frac{3}{2}$.
This also implies that the H\"older conjugate $q'$ satisfies $q'>3$.
Then observing that
\[
|y| \le \frac{1}{2} |x| \quad \Rightarrow \quad 
|x-y| \ge |x| - |y| \ge \frac{1}{2}|x|
\]
and by the H\"older inequality, we have
\begin{align*}
\int_{|y| \le \frac{1}{2}|x|} \frac{| \rho(y)|}{|x-y|} \,dy
&\le \frac{2}{|x|} \int_{|y| \le \frac{1}{2} |x|} | \rho(y)| \,dy \\
&\le \frac{2}{|x|} 
\left( \int_{|y| \le \frac{1}{2} |x|} \,dy \right)^{\frac{1}{q'}}
\left( \int_{|y| \le \frac{1}{2} |x|} |\rho(y)|^q \,dy \right)^{\frac{1}{q}} \\
&\le \frac{C}{|x|^{1-\frac{3}{q'}}} \| \rho \|_{L^q(\R^3)} \to 0 
\quad \text{as} \ |x| \to \infty.
\end{align*}
Next we decompose
\[
\int_{|y| \ge \frac{1}{2} |x|} \frac{|\rho(y)|}{|x-y|} \,dy
= \int_{|y| \ge \frac{1}{2} |x|, |x-y| \le \frac{1}{2}|x|} \frac{|\rho(y)|}{|x-y|} \,dy
+ \int_{|y| \ge \frac{1}{2} |x|, |x-y| \ge \frac{1}{2}|x|} \frac{|\rho(y)|}{|x-y|} \,dy.
\]
Then from \ef{eq:1.9}, one finds that
\begin{align*}
\int_{|y| \ge \frac{1}{2} |x|, |x-y| \le \frac{1}{2}|x|} \frac{|\rho(y)|}{|x-y|} \,dy
& \le \int_{|y| \le \frac{1}{2}|x|, |x-y| \le \frac{1}{2}|x|}
\frac{1}{|x-y|} \cdot \frac{C}{1+|y|^{\alpha}} \,dy \\
&\le \frac{C}{|x|^{\alpha}} 
\int_{|x-y| \le \frac{1}{2}|x|} \frac{1}{|x-y|} \,dy
\le \frac{C}{|x|^{\alpha-2}} \to 0 \quad \text{as} \ |x| \to \infty.
\end{align*}
Moreover by the H\"older inequality, we also have
\begin{align*}
\int_{|y| \ge \frac{1}{2} |x|, |x-y| \ge \frac{1}{2}|x|} \frac{|\rho(y)|}{|x-y|} \,dy
&\le \left( \int_{|y| \ge \frac{1}{2}|x|} |\rho(y)|^q \,dy \right)^{\frac{1}{q}}
\left( \int_{|x-y| \ge \frac{1}{2}|x|} \frac{1}{|x-y|^{q'}} \,dy \right)^{\frac{1}{q'}} \\
&\le \frac{C}{|x|^{1-\frac{3}{q'}}} \| \rho \|_{L^q(\R^3)} 
\to 0 \quad \text{as} \ |x| \to \infty.
\end{align*}
Thus we obtain \ef{eq:2.11}.
\end{proof}

Now we recall that 
$A_2(u)= \frac{1}{4} \intR S_2(x) |u|^2 \,dx$.
By Lemma \ref{lem:2.4}
and the elliptic regularity theory,
it follows that $S_2 \in L^{\infty}(\R^3)$ and
$S_2(x) \to 0$ as $|x| \to \infty$.
Since $|u|^2 \in L^1(\R^3)$, 
we are able to apply the Lebesgue dominated convergence theorem to obtain
\begin{equation} \label{eq:2.12}
A_2 \big( u ( \cdot - k) \big) \to 0 \quad \text{as} \ |k| \to \infty
\ \text{for any} \ u \in H^1(\R^3, \C).
\end{equation}

\section{Existence of a minimizer}

In this section, we aim to prove that the minimization problem \ef{eq:1.1}
admits a solution, provided that the minimum energy for $\rho \equiv 0$ is negative
and $\| \rho\|_{\frac{6}{5}}+ \| x \cdot \nabla \rho \|_{\frac{6}{5}}$
is small. First we begin with the following.

\begin{lemma} \label{lem:3.1}
Suppose that $1<p< \frac{7}{3}$. 
Then for any $\mu>0$, 
$E$ is bounded from below on $B(\mu)$.
\end{lemma}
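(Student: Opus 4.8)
The plan is to isolate the two terms in $E$ that could drive the functional to $-\infty$, namely the focusing nonlinearity $-\frac{1}{p+1}\|u\|_{p+1}^{p+1}$ and the cross term $2e^2 A_2(u)$, and to show that on $B(\mu)$ each of them is dominated by the kinetic energy $\frac12\|\nabla u\|_2^2$. The term $e^2 A_1(u)$ requires no attention: by \eqref{eq:2.3} we have $A_1(u)\ge 0$, so it may simply be dropped when bounding $E$ from below. Throughout I would use that $\|u\|_2^2=\mu$ is fixed on $B(\mu)$, so that every factor of $\|u\|_2$ becomes a harmless constant depending on $\mu$.

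First I would control the nonlinearity by the Gagliardo--Nirenberg inequality in $\R^3$, which gives
\[
\|u\|_{p+1}^{p+1}\le C\,\|u\|_2^{\frac{5-p}{2}}\,\|\nabla u\|_2^{\frac{3(p-1)}{2}}
= C\,\mu^{\frac{5-p}{4}}\,\|\nabla u\|_2^{\frac{3(p-1)}{2}}
\quad\text{on } B(\mu),
\]
the key point being that the exponent $\frac{3(p-1)}{2}$ is strictly less than $2$ precisely when $p<\frac{7}{3}$. For the cross term I would invoke Lemma \ref{lem:2.1}, which yields
\[
2e^2 A_2(u)\ge -\tfrac12 e^2\,C\,\|\rho\|_{\frac65}\,\|u\|_2^{\frac32}\|\nabla u\|_2^{\frac12}
= -C_2\,\mu^{\frac34}\,\|\nabla u\|_2^{\frac12},
\]
again a strictly sub-quadratic power of $\|\nabla u\|_2$. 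Writing $t:=\|\nabla u\|_2\ge 0$ and combining these estimates with $e^2 A_1(u)\ge 0$ produces the one-variable lower bound
\[
E(u)\ \ge\ \tfrac12 t^2-C_1\mu^{\frac{5-p}{4}}t^{\frac{3(p-1)}{2}}-C_2\mu^{\frac34}t^{\frac12}\ =:\ g(t),
\]
with $C_1,C_2$ depending only on $e,p$ and $\|\rho\|_{6/5}$.

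It then suffices to observe that, since $1<p<\frac{7}{3}$ forces both exponents $\frac{3(p-1)}{2}\in(0,2)$ and $\frac12$ to be strictly smaller than $2$, the continuous function $g$ satisfies $g(t)\to+\infty$ as $t\to\infty$ and is therefore bounded below on $[0,\infty)$. Hence $\inf_{B(\mu)}E\ge\inf_{t\ge0}g(t)>-\infty$, which is the claim. The only substantive point in the argument is the exponent comparison: boundedness from below hinges entirely on $\frac{3(p-1)}{2}<2$, i.e. on the mass-subcritical condition $p<\frac{7}{3}$, and strict inequality in fact also yields coercivity of $E$ in $\|\nabla u\|_2$ as a by-product. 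I do not expect a genuine obstacle here, because the nonlocal terms are benign—$A_1$ carries a favorable sign and $A_2$ contributes only a $t^{1/2}$ growth—so the proof reduces to the classical Gagliardo--Nirenberg estimate for the pure nonlinear Schr\"odinger energy.
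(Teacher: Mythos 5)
Your proposal is correct and follows essentially the same route as the paper: drop $e^2A_1(u)\ge 0$, bound the nonlinearity via Gagliardo--Nirenberg and the cross term via Lemma \ref{lem:2.1}, and conclude from the fact that both resulting powers of $\|\nabla u\|_2$ are strictly sub-quadratic (using $p<\tfrac73$). The only cosmetic difference is that the paper absorbs these terms with Young's inequality to get the explicit bound $E(u)\ge \tfrac14\|\nabla u\|_2^2 - C\mu^{\frac{5-p}{7-3p}} - Ce^{8/3}\mu\|\rho\|_{6/5}^{4/3}$, while you minimize a one-variable function $g(t)$; both yield the coercivity in $\|\nabla u\|_2$ that is reused later (e.g.\ in Lemma \ref{lem:3.6}).
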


\begin{proof}
We use the fact that $A_1 \ge 0$ and $\frac{3}{2}(p-1)<2$.
The Gagliardo-Nirenberg inequality,  
the Young inequality and Lemma \ref{lem:2.1} yield that
\[
\begin{aligned}
E(u)
&=\frac{1}{2} \intR |\nabla u|^2 \,dx -\frac{1}{p+1} \intR |u|^{p+1} \,dx
+e^2 A_1(u)+2e^2 A_2(u) \\
& \geq \frac{1}{2}\|\nabla u\|_2^2 
- C \| u \|_2^{\frac{5-p}{2}} \| \nabla u \|_2^{\frac{3}{2}(p-1)} 
- C e^2 \|\rho\|_{\frac{6}{5}} \| u \|_2^{\frac{3}{2}} \| \nabla u \|_2^{\frac{1}{2}} \\
& \geq \frac{1}{2}\|\nabla u\|_2^2 
-\frac{3(p-1) \ep}{4}\|\nabla u\|_2^2
-\frac{(7-3p) C}{4 \ep^{\frac{3(p-1)}{7-3p}}} \| u \|_2^{\frac{2(5-p)}{7-3 p}}
-\frac{\varepsilon}{4}\|\nabla u\|_2^2 
-\frac{4 C e^{\frac{8}{3}}}{3 \ep^{\frac{1}{3}}} 
\|\rho\|_{\frac{6}{5}}^{\frac{4}{3}} \| u\|_2^2 \\ 
&\geq \frac{1}{4}\|\nabla u\|_2^2
-C \mu^{\frac{5-p}{7-3p}}- C e^{\frac{8}{3}} \mu \| \rho \|_{\frac{6}{5}}^{\frac{4}{3}}
\ge -C \mu^{\frac{5-p}{7-3p}}- C e^{\frac{8}{3}} \mu \| \rho \|_{\frac{6}{5}}^{\frac{4}{3}},
\end{aligned}
\]
from which we conclude.
\end{proof}

Next we define
\begin{equation} \label{eq:3.1}
c(\mu)= \inf_{\| u \|_2^2 = \mu} E(u).
\end{equation}
Instead of $C(\mu)$ defined in \ef{eq:1.5}, 
it suffices to show that $c(\mu)$ is attained because of \ef{eq:2.4}.

\begin{lemma} \label{lem:3.2}
Suppose that $1<p<\frac{7}{3}$. 
Then $c(\mu) \leq 0$ for all $\mu >0$.
\end{lemma}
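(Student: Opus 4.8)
The plan is to exhibit, for each $\mu>0$, an admissible family of functions along which $E$ tends to $0$; the natural device is the $L^2$-invariant dilation that \emph{spreads} the mass out, since as the profile flattens one expects every term of $E$ to disappear. Concretely, I would fix an arbitrary $u \in B(\mu)$ (say a smooth, compactly supported function of prescribed mass) and set $u_\lambda(x) := \lambda^{3/2} u(\lambda x)$, which is the scaling of \ef{eq:2.5}--\ef{eq:2.6} with $a=\frac{3}{2}$, $b=1$. A change of variables gives $\|u_\lambda\|_2^2 = \|u\|_2^2 = \mu$, so $u_\lambda \in B(\mu)$ for every $\lambda>0$ and the family is admissible for \ef{eq:3.1}.

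Next I would record how each term of $E(u_\lambda)$ scales and let $\lambda \to 0^+$. One has $\|\nabla u_\lambda\|_2^2 = \lambda^2 \|\nabla u\|_2^2$ and $\|u_\lambda\|_{p+1}^{p+1} = \lambda^{\frac{3(p-1)}{2}} \|u\|_{p+1}^{p+1}$, while \ef{eq:2.5} gives $A_1(u_\lambda) = \lambda A_1(u)$ and \ef{eq:2.7} gives $|A_2(u_\lambda)| \le C \lambda^{1/2} \|\rho\|_{\frac{6}{5}} \|u\|_2^{3/2}\|\nabla u\|_2^{1/2}$. Since $1<p<\frac{7}{3}$ forces $p-1>0$, all four exponents $2$, $\frac{3(p-1)}{2}$, $1$, $\frac{1}{2}$ are strictly positive, so every term tends to $0$ as $\lambda \to 0^+$. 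Hence $E(u_\lambda) \to 0$, and as $c(\mu) \le E(u_\lambda)$ for each $\lambda$, passing to the limit yields $c(\mu) \le 0$, which is the assertion.

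The argument is essentially a bookkeeping of exponents, so I do not anticipate a genuine obstacle; the one point deserving care is the doping term $A_2$, whose scaling is \emph{not} homogeneous because $\rho$ does not dilate, so in place of an exact power I would invoke the H\"older-type estimate \ef{eq:2.7}, which nonetheless decays like $\lambda^{1/2}$. It is also worth stressing which direction of the dilation is used: here $\lambda \to 0^+$ spreads the mass and drives the energy to $0$, in contrast to the concentrating choice $a=0$, $b=-1$ used for the Pohozaev identity \ef{eq:2.9}. Finally, the hypothesis $p>1$ enters precisely through the positivity of the exponent $\frac{3(p-1)}{2}$ of the $L^{p+1}$ term, without which the spreading limit would fail to push the energy down to $0$.
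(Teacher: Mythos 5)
Your proposal is correct and coincides with the paper's own proof: the same mass-preserving dilation $u_\lambda(x)=\lambda^{3/2}u(\lambda x)$ (i.e. $a=\tfrac32$, $b=1$) is used, with the homogeneous scalings of the gradient, $L^{p+1}$ and $A_1$ terms combined with the estimate \ef{eq:2.7} giving $|A_2(u_\lambda)|\le C\lambda^{1/2}$, and the limit $\lambda\to 0^+$ yields $c(\mu)\le 0$. The exponent bookkeeping, including the treatment of the non-homogeneous doping term via \ef{eq:2.7}, matches the paper exactly.
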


\begin{proof}
Let us consider $u_{\lambda}(x)=\lambda^{\frac{3}{2}} u(\lambda x)$ 
for $\| u \|_2^2 = \mu$.
Note that $\| u_{\lambda} \|_2^2=\mu$ for any $\lambda >0$.
Using \ef{eq:2.5} and \ef{eq:2.7}, we have
\[
\begin{aligned}
E(u_{\lambda})
&= \frac{\lambda^2}{2}  \| \nabla u \|_2^2 
-\frac{\lambda^{\frac{3(p-1)}{2}}}{p+1} \| u \|_{p+1}^{p+1}
+e^2 \lambda A_1(u)+2e^2 A_2(u_{\lambda}) \\
&\leq \frac{\lambda^2}{2} \| \nabla u\|_2^2
-\frac{\lambda^{\frac{3(p-1)}{2}}}{p+1} \|u\|_{p+1}^{p+1}
+e^2 \lambda A_1(u) 
+ C e^2 \lambda^{\frac{1}{2}} \| \rho\|_{\frac{6}{5}} \| u \|_2^{\frac{3}{2}}
\| \nabla u \|_2^{\frac{1}{2}} 
\rightarrow 0 \text { as } \lambda \rightarrow 0+. 
\end{aligned}
\]
This implies that $c(\mu) \le 0$, as claimed.
\end{proof}

Next in order to prove the strict sub-additivity, 
we first mention that the condition \ef{eq:1.8} on $\rho$
and the definition of $A_2$ in \ef{eq:2.2} imply that $A_2(u) < 0$ 
for $u \not\equiv 0$ and hence
\[
\begin{aligned}
E(u) &= \frac{1}{2} \| \nabla u \|_2^2 -\frac{1}{p+1} \| u \|_{p+1}^{p+1}
+e^2 A_1(u) + 2 e^2 A_2(u) \\
&< \frac{1}{2} \| \nabla u \|_2^2 - \frac{1}{p+1} \| u \|_{p+1}^{p+1}
+e^2 A_1(u ) = E_{\infty}(u)
\quad \text{for any} \ u \in H^1(\R^3,\C) \setminus \{ 0 \}.
\end{aligned}
\]
Thus it holds that  
\begin{equation} \label{eq:3.2}
c(\mu) \le c_{\infty}(\mu) \quad \text{for all} \ \mu >0.
\end{equation}
Moreover 
since $c_{\infty}(\mu)$ is attained and negative 
if $2<p<\frac{7}{3}$ and $\mu > \mu^*$,
it follows that
\begin{equation} \label{eq:3.3}
c(\mu) \leq c_{\infty}(\mu) < \frac{1}{2} c_{\infty}(\mu)<0 \quad \hbox{if} \ \
2< p<\frac{7}{3} \ \hbox{and} \ \mu > \mu^*.
\end{equation} 

Using \ef{eq:3.2}, we are able to prove the following lemma.

\begin{lemma} \label{lem:3.3}
Suppose that $1<p<\frac{7}{3}$ and let $\mu >0$ be given.

\begin{enumerate}

\item[\rm(i)] For all $0< \mu' \le \mu$, 
it holds that $c(\mu) \le c(\mu')$.

\item[\rm(ii)] $c(\mu)$ is continuous with respect to $\mu>0$.

\item[\rm(iii)] It follows that $c( \lambda \mu) \le \lambda c_{\infty}(\mu)$
for all $\lambda >1$.

\end{enumerate}

\end{lemma}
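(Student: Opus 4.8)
**The plan is to prove the three parts in order, using the scaling structure developed in Subsection 2.4 together with the comparison inequality \ef{eq:3.2}.**

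For part (i), the monotonicity $c(\mu) \le c(\mu')$ for $0 < \mu' \le \mu$, I would proceed by a rescaling that adjusts the mass. Given a near-minimizer $u$ for $c(\mu')$, I would like to produce a competitor of mass $\mu$ with energy no larger. The natural scaling $u \mapsto \lambda^{3/2} u(\lambda x)$ preserves the $L^2$-norm, so instead I would use a pure dilation in amplitude, say $v(x) = t\, u(x)$ with $t^2 = \mu/\mu' \ge 1$, which scales the mass correctly. The difficulty is that under such scaling $A_1$ and the $\|u\|_{p+1}^{p+1}$ term grow, so the energy need not decrease. A cleaner route exploits \ef{eq:3.2}: since $c_\infty$ is known to be monotone non-increasing (being a minimization over a larger effective configuration space as mass grows, in the regime $p<7/3$), and $c(\mu)\le c_\infty(\mu)$, one wants to transfer monotonicity. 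I expect the intended argument is to take a minimizing sequence $(u_n)$ for $c(\mu')$ and translate a rescaled copy far away: set $w_n = u_n + \lambda^{3/2}\varphi(\lambda(\cdot - k_n))$ with $|k_n|\to\infty$ to add the extra mass $\mu-\mu'$ ``at infinity,'' where by \ef{eq:2.12} the term $A_2$ of the far-away bump vanishes and the cross terms decouple, giving in the limit $c(\mu) \le c(\mu') + c_\infty(\mu-\mu')$, and then invoking $c_\infty \le 0$ to conclude $c(\mu)\le c(\mu')$.

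For part (iii), the inequality $c(\lambda\mu) \le \lambda c_\infty(\mu)$ for $\lambda>1$, I would argue directly by constructing a test function of mass $\lambda\mu$ out of $\lceil$roughly$\rceil$ $\lambda$ widely separated translated copies of a near-minimizer for $c_\infty(\mu)$. Concretely, for integer $\lambda$ the standard splitting trick gives, using the translation-decoupling from \ef{eq:2.12} and the fact that $A_2$ of each far-separated bump tends to $0$, that placing $\lambda$ translated copies yields energy approaching $\lambda c_\infty(\mu)$; for general real $\lambda>1$ one interpolates by allowing copies of unequal mass and using the continuity established in part (ii). The key point is that the doping term $A_2$, being negative, only helps the inequality, while the nonlocal self-interaction $A_1$ decouples across widely separated supports.

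For part (ii), continuity of $c(\mu)$, I would combine the bounds already available. A minimizing sequence for $c(\mu)$ has uniformly bounded $H^1$-norm by the coercivity estimate in the proof of Lemma \ref{lem:3.1}; rescaling the mass from $\mu$ to $\mu'$ by the amplitude factor $t=\sqrt{\mu'/\mu}$ and using the explicit growth rates of each term (the gradient and $A_1$ terms scale like $t^2$, the $L^{p+1}$ term like $t^{p+1}$, and $A_2$ like $t^2$ via Lemma \ref{lem:2.1}), one obtains $|c(\mu)-c(\mu')| \le \eta(|\mu-\mu'|)$ with a modulus $\eta$ controlled uniformly on compact subsets of $(0,\infty)$ using the uniform $H^1$-bound on minimizing sequences. \textbf{The main obstacle I anticipate is part (i):} unlike the pure Schr\"odinger-Poisson case $\rho\equiv 0$, the loss of translation invariance caused by $S_2(x)$ means the naive ``add mass at infinity'' decoupling must be justified carefully, relying precisely on the decay $A_2(u(\cdot-k))\to 0$ from \ef{eq:2.12} to ensure the far-translated bump does not feel the doping profile; the cross terms between the fixed part and the receding bump must be shown to vanish, which is exactly where the estimates in Lemma \ref{lem:2.1} and the decay of $S_2$ in Lemma \ref{lem:2.4} enter.
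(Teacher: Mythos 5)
Your parts (i) and (ii) are essentially the paper's own argument. For (i) the paper does exactly what you sketch: it takes near-minimizers $u$ for $c(\mu')$ and $v$ for $c_\infty(\mu-\mu')$ in $C_0^\infty$, translates $v$ far away so the supports are disjoint, shows the cross term in $A_1$ decays like $|k|^{-1}$, and uses the splitting $A_2(u+v_k)=A_2(u)+A_2(v_k)$; the only cosmetic difference is that the paper does not need \ef{eq:2.12} at this point, since $A_2(v_k)\le 0$ by the sign condition \ef{eq:1.8}, so the receding bump can simply be charged to $E_\infty$. For (ii), your amplitude-rescaling argument with the uniform $H^1$ bound is the same mechanism the paper uses (note only that $A_1(tu)=t^4A_1(u)$, not $t^2$, which is harmless).

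Part (iii), however, has a genuine gap. Splitting into widely separated copies only yields subadditivity, which gives $c(m\mu)\le m\,c_\infty(\mu)$ for \emph{integer} $m$; for non-integer $\lambda=m+\theta$, $\theta\in(0,1)$, your construction gives at best $c(\lambda\mu)\le m\,c_\infty(\mu)+c_\infty(\theta\mu)$, and the inequality $c_\infty(\theta\mu)\le\theta\,c_\infty(\mu)$ that you would need to conclude is false in general: for $2<p<\frac{7}{3}$ one has $c_\infty(s)=0$ for $s\le\mu^*$ by \ef{eq:1.6}, so if $\theta\mu\le\mu^*$ and $c_\infty(\mu)<0$ then $c_\infty(\theta\mu)=0>\theta\,c_\infty(\mu)$. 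The deficit $\theta\,|c_\infty(\mu)|$ is of fixed size, so the continuity from part (ii) cannot repair it; moreover continuity of $c$ says nothing about $c_\infty$. In short, subadditivity plus monotonicity is strictly weaker than the subhomogeneity $c_\infty(\lambda\mu)\le\lambda\,c_\infty(\mu)$, which is proved in the literature by a genuine scaling argument (of the type $u_\lambda(x)=\lambda^a u(\lambda^b x)$ with $2a-3b=1$), not by splitting. The paper's proof of (iii) is precisely to quote this known scaling-based inequality for $c_\infty$ from \cite{CW, JL} and combine it with the comparison $c\le c_\infty$ in \ef{eq:3.2}. This matters downstream: Lemma \ref{lem:3.4} (ii) applies (iii) with ratios such as $\frac{\mu'}{\mu-\mu'}$, which are generically non-integer, so the integer-only version your argument actually establishes would not suffice.
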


\begin{proof}
(i) Let $0< \mu' < \mu$ be given.
For any $\ep>0$, there exist $u$, $v \in C_0^{\infty}(\R^3)$ such that
\[
\| u \|_2^2 = \mu', \quad \| v\|_2^2 = \mu-\mu', 
\]
\begin{equation} \label{eq:3.4}
E(u) \le c\left(\mu^{\prime} \right) +\frac{\varepsilon}{2} 
\quad \hbox{and} \quad 
E_{\infty}(v) \le c_{\infty} \left(\mu-\mu^{\prime}\right) +\frac{\varepsilon}{2}.
\end{equation}
For $k \in \R^3$, we put $v_k(x) := v(x-k)$.
Then for sufficiently large $|k|$, it follows that 
\begin{equation} \label{eq:3.5}
\operatorname{supp} u \cap \operatorname{supp} v_k = \emptyset
\quad \text{and} \quad \| u + v_k \|_2^2 = \mu.
\end{equation}

Now from \ef{eq:3.5}, one finds that 
\begin{align} \label{eq:3.6}
S_1(u+v_k) 
& =\frac{1}{8 \pi} \int_{\mathbb{R}^3} \frac{|u(y)+v_k(y)|^2}{|x-y|} \,d y 
=\frac{1}{8 \pi} \int_{\mathbb{R}^3} \frac{|u(y)|^2}{|x-y|} \,d y
+\frac{1}{8 \pi} \int_{\mathbb{R}^3} \frac{|v_k(y)|^2}{|x-y|} \,d y \notag \\
&=S_1(u)+S_1(v_k).
\end{align}
Then it holds that
\[
\begin{aligned}
A_1(u+v_k) 
& =\frac{1}{4} \int_{\mathbb{R}^3} 
S_1(u+v_k) |u+v_k|^2 \,d x 
 =\frac{1}{4} \int_{\mathbb{R}^3} \big( S_1(u)+S_1(v_k) \big)
\left( | u |^2+ | v_k |^2\right) \,d x \\
&=\frac{1}{4} \int_{\mathbb{R}^3} S_1(u) | u |^2 \,d x
+\frac{1}{4} \int_{\mathbb{R}^3} S_1(v_k) | v_k |^2 \,d x
+\frac{1}{2} \intR S_1(u)| v_k |^2 \,dx \\
&= A_1(u)+A_1(v_k) + R(k),
\end{aligned}
\]
where 
\[
\begin{aligned}
R(k) &:= \frac{1}{2} \intR S_1(u)| v_k |^2 \,dx
= \frac{1}{16 \pi} \int_{\mathbb{R}^3} \intR 
\frac{|u(y)|^2 |v_k(x)|^2}{|x-y|} \,d x\,d y \\
&=\frac{1}{16 \pi} \int_{{\rm supp}\,u} \int_{{\rm supp}\,v+k } 
\frac{|u(y)|^2 |v(x-k)|^2}{|x-y|} \,d x\,d y.
\end{aligned}
\]
If $x \in {\rm supp}\,v+k$ and $y \in {\rm supp}\,u$, it follows that 
$\dis |x-y| \ge |k|- \sup_{t \in {\rm supp}\, v, s \in {\rm supp}\, u} |t-s|$,
provided that $|k|$ is sufficiently large. 
Then one has
\[
R(k) \le
\frac{1}{16\pi \{ |k|- \sup_{t \in {\rm supp}\, v, s \in {\rm supp}\, u} |t-s| \} }
\| u \|_{2}^2 \| v \|_{2}^2 
\rightarrow 0 \text { as } |k| \rightarrow \infty.
\]
Moreover from \ef{eq:1.8} and \ef{eq:3.6}, we get
\begin{align} \label{eq:3.7}
A_2(u+v_k) &= - \frac{1}{4} \intR S_1(u+v_k) \rho(x) \,dx 
= - \frac{1}{4} \intR S_1(u) \rho(x) \,dx - \frac{1}{4} \intR S_1(v_k) \rho(x) \,dx \notag \\
&= A_2(u) + A_2(v_k) \le A_2(u).
\end{align}
Since $\| u + v_k \|_2^2 = \mu$, we have from \ef{eq:3.4} and \ef{eq:3.7} that
\[
c(\mu) \le E(u+v_k) = E(u) +E_{\infty}(v_k)+R(k) 
\le c\left(\mu^{\prime}\right) +c_{\infty}\left(\mu-\mu^{\prime}\right) 
+\varepsilon+R(k).
\]
Here we used the fact that $E_{\infty}(u)$ is translation invariant.
Thus one has
\[
c(\mu) \le \limsup_{|k| \to \infty} 
\left\{ c\left(\mu^{\prime}\right) 
+ c_{\infty}\left(\mu-\mu^{\prime}\right) +\ep+R(k)\right\} 
= c \left(\mu^{\prime}\right) +c_{\infty}\left(\mu-\mu^{\prime}\right)+\varepsilon.
\]
Passing to a limit $\ep \to 0+$ and using the fact that $c_{\infty}(\mu- \mu') \le 0$, 
we obtain
\[
c(\mu) \le c\left(\mu^{\prime}\right) +c_{\infty}\left(\mu -\mu^{\prime}\right) 
\le c\left( \mu^{\prime} \right).
\]

(ii) Let $\mu>0$ be fixed. 
First we infer that 
\[
\lim_{h \rightarrow 0+} c(\mu-h)=c(\mu).
\]
If $c(\mu)=0$, (i) implies that 
\[
0 = c(\mu) \le c( \mu -h) \le 0
\]
and hence the claim follows trivially.
Thus we may assume that $c(\mu)<0$.

For any $u \in B(\mu)$ and $h \in (0, \mu)$, we put
$u_h(x) := \sqrt{1- \frac{h}{\mu}} u(x)$.
Then it holds that 
\[
\| u_h \|_2^2 = \left( 1- \frac{h}{\mu} \right) \| u \|_2^2
= \mu- h \quad \text{and} \quad u_h \to u \ \text{in} \ H^1(\R^3) \textnormal{ as } h \to 0.
\]
Thus by Lemma \ref{lem:2.2}, one has
\[
\limsup_{h \to 0+} c(\mu -h) \le \lim_{h \to 0+} E(u_h) = E(u).
\]
Since $u \in B(\mu)$ is arbitrary, using (i) again, we find that
\[
c(\mu) \le \liminf_{h \to 0+} c( \mu -h) 
\le \limsup_{h \to 0+} c( \mu -h) \le c(\mu).
\]

Next we claim that
\[
\lim_{h \to 0+} c(\mu +h)= c(\mu).
\]
From (i), we know that $c(\mu+h) \le c(\mu)$ for any $h>0$.
Moreover it suffices to consider the case $h= \frac{1}{n}$ for $n \in \N$.
Let us choose $u_n \in B \left( \mu + \frac{1}{n} \right)$ so that
$E(u_n) \le c \left( \mu + \frac{1}{n} \right) + \frac{1}{n}$.
Then by the proof of Lemma \ref{lem:3.1}, 
$\{ u_n \}$ is bounded in $H^1(\R^3,\C)$.
Putting $v_n(x) := \sqrt{\frac{n \mu}{n \mu +1}} u_n(x)$, 
we have $v_n \in B(\mu)$ and
\[
\| v_n - u_n \|_{H^1(\R^3)}
= \left( 1- \sqrt{\frac{n\mu }{n\mu +1}} \, \right) \| u_n \|_{H^1(\R^3)} \rightarrow 0
\quad \text{as} \ n \to \infty.
\]
This implies that $E(v_n) = E(u_n) +o(1)$ and hence
\[
c(\mu) \le \liminf_{n \rightarrow \infty} E(v_n)
= \liminf_{n \to \infty} E(u_n) \le \liminf_{n \rightarrow \infty} 
c \left( \mu +\frac{1}{n}\right).
\]
Thus we obtain
\[
c(\mu) \le \liminf_{h \to 0+} c(\mu+h) \le \limsup_{h \to 0+} c( \mu+h) \le c(\mu).
\]

(iii) It is known that $c_{\infty}(\lambda \mu) \le \lambda c_{\infty}(\mu)$; 
See e.g. \cite{CW, JL}.
Thus from \ef{eq:3.2}, we obtain
\[
c( \lambda \mu) \le c_{\infty}(\lambda \mu) \le  \lambda c_{\infty}(\mu), 
\quad \text{for any $\mu>0$ and $\lambda >1$},
\]
which completes the proof.
\end{proof} 

Under these preparations, we are able to prove the strict sub-additivity
when $2<p< \frac{7}{3}$. 
The next lemma states that a smaller mass cannot escape to infinity
if the total mass is large.

\begin{lemma} \label{lem:3.4}
Suppose that $2 < p < \frac{7}{3}$ and let $\mu > 2\cdot 2^{\frac{1}{2p-4}} \mu^*$ be given. 
There exists $\rho_0= \rho_0(e, \mu)>0$ such that 
if $\|\rho\|_{\frac{6}{5}}+\|x \cdot \nabla \rho\|_{\frac{6}{5}} \le \rho_0$,
the following properties hold.

\begin{enumerate}
\item[\rm(i)] $c( \lambda s) < \lambda c(s)$ 
for all $\lambda >1$ and $\frac{\mu}{2\cdot 2^{\frac{1}{2p-4}}} \le s \le \mu$. 

\smallskip
\item[\rm(ii)] 
$c(\mu) < c\left(\mu^{\prime}\right) + c_{\infty} \left(\mu-\mu^{\prime}\right)$
for all $\frac{\mu}{2\cdot 2^{\frac{1}{2p-4}}} \le \mu' < \mu$
and $c(\mu) < c\left(\mu - \mu^{\prime}\right) + c_{\infty} (\mu^{\prime})$
for all $0< \mu' < \left( 1- \frac{1}{2\cdot 2^{\frac{1}{2p-4}}} \right) \mu $.

\end{enumerate}

\end{lemma}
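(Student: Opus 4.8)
\emph{Strategy.} Write $\kappa:=2\cdot 2^{\frac{1}{2p-4}}$ for the threshold constant. The heart of the matter is the strict super-homogeneity (i); once it is available, (ii) follows by elementary manipulations together with $c(\cdot)\le c_\infty(\cdot)$ from \ef{eq:3.2} and the homogeneity of $c_\infty$ recorded in Lemma \ref{lem:3.3}(iii). The hypothesis $\mu>\kappa\mu^*$ serves one purpose: it guarantees that every mass $s$ with $\frac{\mu}{\kappa}\le s\le\mu$ satisfies $s>\mu^*$, so that $c_\infty(s)$ is attained and $c_\infty(s)<0$; this attainment is what provides a good test function.

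\emph{Plan for (i).} The plan is a two-parameter scaling argument in the spirit of \cite{ZZou}. Fix $\lambda>1$ and $s\in[\frac{\mu}{\kappa},\mu]$, and given $\varepsilon>0$ choose $u\in B(s)$ with $E(u)\le c(s)+\varepsilon<0$. I would first normalise $u$ by replacing it with the mass-preserving dilation $\tau^{3/2}u(\tau\cdot)$ that minimises $\tau\mapsto E(\tau^{3/2}u(\tau\cdot))$; this infimum is attained at some finite $\tau_*>0$ because $\frac{3(p-1)}{2}<2$ forces the gradient term to dominate as $\tau\to\infty$ while the whole expression tends to $0$ as $\tau\to0+$. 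Stationarity at $\tau_*$ yields a virial identity tying $\|\nabla u\|_2^2$, $\|u\|_{p+1}^{p+1}$ and $A_1(u)$ together, up to a correction of the form $-4e^2A_2(u)+e^2A_3(u)$ which, by Lemma \ref{lem:2.1}, is $O(\|\rho\|_{\frac65}+\|x\cdot\nabla\rho\|_{\frac65})$. I would then test $c(\lambda s)$ against the competitor $v:=\sqrt{\lambda}\,\tau_\lambda^{3/2}u(\tau_\lambda\cdot)\in B(\lambda s)$: the amplitude factor $\sqrt\lambda$ raises the mass to $\lambda s$, while the dilation $\tau_\lambda$ is tuned to offset the repulsive contribution. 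Expanding $E(v)-\lambda E(u)$ by means of \ef{eq:2.5}, \ef{eq:2.6} and \ef{eq:2.7}, and inserting the virial identity together with the strict inequality $E(u)<0$, the part with $\rho\equiv0$ is made strictly negative through the choice of $\tau_\lambda$, while the remaining terms are controlled by $C(\|\rho\|_{\frac65}+\|x\cdot\nabla\rho\|_{\frac65})$ uniformly in $u$. Choosing $\rho_0$ small then gives $E(v)<\lambda E(u)\le\lambda c(s)+\lambda\varepsilon$, and letting $\varepsilon\to0+$ produces $c(\lambda s)<\lambda c(s)$.

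\emph{Main obstacle.} The difficulty is concentrated in the nonlocal repulsive term $A_1$: under the amplitude scaling it grows like $\lambda^2$, whereas every other term should scale like $\lambda$, so a pure amplitude scaling raises the energy and fails. Introducing the spatial dilation $\tau_\lambda$ to neutralise this forces a sharp one-variable inequality in $\lambda$ whose validity hinges on $2<p<\frac73$ and on the negativity of $E(u)$; verifying this inequality, and simultaneously keeping $s>\mu^*$, is exactly where the explicit threshold $\kappa$ and the smallness of $\|\rho\|_{\frac65}+\|x\cdot\nabla\rho\|_{\frac65}$ enter.

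\emph{Plan for (ii).} I would first upgrade (i) to the strict sub-additivity of $c$ itself: if $\mu',\mu-\mu'\ge\frac{\mu}{\kappa}$ and, say, $\mu'\le\mu-\mu'$, then applying (i) to the larger mass $\mu-\mu'$ with ratio $\frac{\mu}{\mu-\mu'}>1$ gives $c(\mu)<c(\mu-\mu')+\frac{\mu'}{\mu-\mu'}c(\mu-\mu')$, while applying (i) to the smaller mass $\mu'$ with ratio $\frac{\mu-\mu'}{\mu'}\ge1$ gives $\frac{\mu'}{\mu-\mu'}c(\mu-\mu')<c(\mu')$ (when $\mu'=\mu-\mu'$ this reduces to (i) with $\lambda=2$). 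Hence $c(\mu)<c(\mu')+c(\mu-\mu')$, and weakening one term by $c(\cdot)\le c_\infty(\cdot)$ yields both inequalities of (ii) on the range where both pieces exceed $\frac{\mu}{\kappa}$ (note that $\kappa>2$, so for a splitting the smaller piece being $\ge\frac{\mu}{\kappa}$ is exactly the stated restriction). For the extreme range of the first inequality, namely $\mu'>(1-\frac1\kappa)\mu$ (so the escaping piece $\mu-\mu'<\frac{\mu}{\kappa}$ and (i) no longer applies to it, while $\mu'>\frac{\mu}{2}$), I would instead apply (i) only to $\mu'\ge\frac{\mu}{\kappa}$ with ratio $\frac{\mu}{\mu'}$, obtaining $c(\mu)<c(\mu')+\frac{\mu-\mu'}{\mu'}c(\mu')$, and then bound $\frac{\mu-\mu'}{\mu'}c(\mu')\le c_\infty(\mu-\mu')$ using $c(\mu')\le c_\infty(\mu')$ from \ef{eq:3.2} together with the homogeneity $c_\infty(\theta\mu')\ge\theta c_\infty(\mu')$ valid for $\theta=\frac{\mu-\mu'}{\mu'}\le1$ (the form of the estimate $c_\infty(\lambda\nu)\le\lambda c_\infty(\nu)$, $\lambda>1$, invoked in Lemma \ref{lem:3.3}(iii)). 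The second inequality of (ii) follows by the symmetric bookkeeping.
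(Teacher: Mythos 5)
Your scaling family is in fact the paper's own: the competitor $v=\sqrt{\lambda}\,\tau^{3/2}u(\tau\cdot)$ with $\tau=\lambda^{b}$ is exactly the paper's $\lambda^{a}u(\lambda^{b}x)$ with $2a-3b=1$, so part (i) is attempted by the same route (the scaling argument of \cite{ZZou}), not a different one. Your part (ii) is correct and complete modulo (i): the symmetric splitting when both pieces exceed $\mu/(2\cdot 2^{\frac{1}{2p-4}})$, the special case $\mu'=\mu/2$, and the one-sided argument on the extreme ranges using $c\le c_\infty$ from \ef{eq:3.2} together with $\theta\, c_\infty(\nu)\le c_\infty(\theta\nu)$ for $\theta\le 1$, reproduce the paper's own case analysis up to notation. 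The preliminary normalization by an optimal mass-preserving dilation is also legitimate (the infimum over $\tau$ is negative, hence attained in the interior), and the stated virial correction $-4e^2A_2(u)+e^2A_3(u)$ is the right one.

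The genuine gap is in part (i), and it is exactly the heart of the lemma. The sentence ``the part with $\rho\equiv0$ is made strictly negative through the choice of $\tau_\lambda$'' is never substantiated: you neither exhibit $\tau_\lambda$ nor verify the resulting one-variable inequality, and you yourself flag this as the main obstacle without resolving it. This verification is the proof; the paper does it by taking $\tau_\lambda=\lambda^{b}$ with $1\le b<\frac{p-1}{7-3p}$ (a choice possible precisely because $p>2$; $b\ge1$ is what allows the $A_1$-term to be discarded via $A_1\ge0$) and by showing $f'(\lambda)<0$ for $f(\lambda)=\lambda^{-(1+2b)}E\big((u_\ep)_\lambda\big)-E(u_\ep)$. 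Two quantitative points you gloss over are essential there. First, the negativity you invoke must be uniform: what is needed is $E(u_\ep)\le\frac14\,c_\infty\big(\mu/(2\cdot 2^{\frac{1}{2p-4}})\big)<0$ for every $s\in[\mu/(2\cdot 2^{\frac{1}{2p-4}}),\mu]$ (via Lemma \ref{lem:3.3}(i) and \ef{eq:3.3}); the mere strict inequality $E(u)<0$ carries no $\lambda$- and $s$-independent margin, while the $\rho$-errors grow polynomially in $\lambda$, so ``controlled by $C(\|\rho\|_{\frac65}+\|x\cdot\nabla\rho\|_{\frac65})$ uniformly in $u$'' does not suffice to conclude for all $\lambda>1$ with a single $\rho_0=\rho_0(e,\mu)$. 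Second, your closing limit loses strictness: from $E(v)<\lambda E(u)\le\lambda c(s)+\lambda\ep$ for every $\ep>0$ you can only conclude $c(\lambda s)\le\lambda c(s)$. You need either an $\ep$-independent negative gap $E(v)-\lambda E(u)\le-\delta(\lambda)<0$, or the paper's device: prove the super-homogeneous bound $c(\lambda s)\le\lambda^{1+2b}c(s)$ and then use $c(s)<0$ together with $\lambda^{1+2b}>\lambda$ to recover the strict inequality.
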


\begin{proof} 
(i) First by Lemma \ref{lem:3.3} (i), \ef{eq:1.6}, 
\ef{eq:3.3} and from the fact $\frac{\mu}{2\cdot 2^{\frac{1}{2p-4}}} > \mu^*$, we have
\[
c(s) \le c\left( \frac{\mu}{2\cdot 2^{\frac{1}{2p-4}}} \right) 
< \frac{1}{2} c_{\infty}\left( \frac{\mu}{2\cdot 2^{\frac{1}{2p-4}}} \right)< 0,
\]
provided that $\| \rho \|_{\frac{6}{5}}$ is small.
For any $\ep \in \left( 0, - \frac{1}{4} c_{\infty} \Big( \frac{\mu}{2\cdot 2^{\frac{1}{2p-4}}} \Big) \right)$, 
there exists $u_{\ep} \in B(s)$ such that
\begin{equation} \label{eq:3.8}
E(u_{\ep}) \le c(s) + \ep \le \frac{1}{4} c_{\infty}\left( \frac{\mu}{2\cdot 2^{\frac{1}{2p-4}}} \right) < 0.
\end{equation}
Next for $a$, $b \in \R$ satisfying $2a-3b=1$, 
let us consider a scaling $(u_{\ep})_{\lambda}
:= \lambda^a u_{\ep} \left(\lambda^b x\right)$ so that the following property holds.
\begin{equation} \label{eq:3.9}
\left\| (u_{\ep})_{\lambda} \right\|_2^2
=\lambda^{2 a-3 b}\| u_{\ep} \|_2^2= \lambda s.
\end{equation}
Then from \ef{eq:2.5} and \ef{eq:2.6}, it follows that
\[
\begin{aligned}
E \big( (u_{\ep})_{\lambda} \big)
&= \frac{\lambda^{1+2b}}{2} \|\nabla u_{\ep} \|_2^2
-\frac{\lambda^{\frac{p+1+3(p-1) b}{2}}}{p+1} \| u_{\ep} \|_{p+1}^{p+1} \\
&\quad +e^2 \lambda^{2+b} A_1(u_{\ep} )
-\frac{\lambda^{1-2b}}{2} e^2 \intR S_1(u_{\ep}) \rho (\lambda^{-b} x) \,dx.
\end{aligned}
\]
We define a function $f(\lambda):[1,\infty) \to \R$ by
\[
\begin{aligned}
f(\lambda) &:= \frac{1}{\lambda^{1+2b}} E \big( (u_{\ep})_{\lambda} \big)- E(u_{\ep}) \\
&= \frac{1}{p+1} 
\left( 1- \lambda^{\frac{p-1+(3p-7)b}{2}} \right) \| u_{\ep} \|_{p+1}^{p+1}
-e^2 (1-\lambda^{1-b}) A_1(u_{\ep} ) \\
&\quad 
-2e^2 A_2(u_{\ep})
-\frac{\lambda^{-4b} }{2} e^2 \intR S_1(u_{\ep}) \rho (\lambda^{-b} x) \,d x.
\end{aligned}
\]
Note that $f(1)=0$.

We claim that there exists $\rho_0= \rho_0(e, \mu)>0$ such that 
if $\|\rho\|_{\frac{6}{5}}+\|x \cdot \nabla \rho\|_{\frac{6}{5}} \le \rho_0$,
\begin{equation} \label{eq:3.10}
f^{\prime}(\lambda) < 0 \quad \text{for all} \ \lambda >1 
\ \text{and} \ s \in \left[ \frac{\mu}{2\cdot 2^{\frac{1}{2p-4}}}, \mu \right].
\end{equation}
For this purpose, one computes
\[
\begin{aligned}
f'(\lambda)
&=-\frac{p-1+(3 p-7) b}{2(p+1)} \lambda^{\frac{p-3+(3p-7) b}{2}} 
\| u_{\ep} \|_{p+1}^{p+1} 
+(1-b) \lambda^{-b} e^2 A_1(u_{\ep}) \\
&\quad +2 b \lambda^{-1-4b} e^2 \intR S_1(u_{\ep}) \rho (\lambda^{-b} x) \,d x
+\frac{b}{2} \lambda^{-1-5 b} e^2 
\intR S_1(u_{\ep}) x \cdot \nabla \rho (\lambda^{-b}x ) \,dx.
\end{aligned}
\]
By the definition of $E$, we also have
\[
-\frac{1}{p+1}\| u_{\ep} \|_{p+1}^{p+1}
=E(u_{\ep} )-\frac{1}{2} \| \nabla u_{\ep} \|_2^2-e^2 A_1(u_{\ep})-2 e^2 A_2(u_{\ep}),
\]
and hence
\begin{align*} 
f^{\prime}(\lambda)
&= \frac{p-1+(3p-7)b}{2} \lambda^{\frac{p-3+(3p-7)b}{2}} E(u_{\ep}) 
- \frac{p-1+(3p-7)b}{4} \lambda^{\frac{p-3+(3p-7)b}{2}} \| \nabla u_{\ep} \|_2^2 \\
&\quad - e^2 \left( \frac{p-1+(3p-7)b}{2} \lambda^{\frac{p-3+(3p-7)b}{2}}
- (1-b) \lambda^{-b} \right) A_1(u_{\ep}) \\
&\quad - e^2 \big( p-1 +(3p-7)b \big) \lambda^{\frac{p-3+(3p-7)b}{2}} A_2(u_{\ep}) \\
&\quad +2 b \lambda^{-1-4b} e^2 \intR S_1(u_{\ep}) \rho (\lambda^{-b} x) \,d x
+\frac{b}{2} \lambda^{-1-5 b} e^2 
\intR S_1(u_{\ep}) x \cdot \nabla \rho (\lambda^{-b}x ) \,dx.
\end{align*}
Now since $2<p<\frac{7}{3}$, we can take $b>0$ so that 
\begin{equation} \label{eq:3.11}
1 \le b < \frac{p-1}{7-3 p}.
\end{equation}
Then one has 
\[
\frac{p-1+(3p-7)b}{2} \lambda^{\frac{p-3+(3p-7)b}{2}}
- (1-b) \lambda^{-b} \ge 0 \quad \text{for} \ \lambda > 1.
\]
Thus from \ef{eq:2.3}, one finds that
\[
\begin{aligned}
f^{\prime}(\lambda)
&\le \frac{p-1+(3p-7)b}{2} \lambda^{\frac{p-3+(3p-7)b}{2}} E(u_{\ep}) 
- \frac{p-1+(3p-7)b}{4} \lambda^{\frac{p-3+(3p-7)b}{2}} \| \nabla u_{\ep} \|_2^2 \\
&\quad - e^2 \big( p-1 +(3p-7)b \big) \lambda^{\frac{p-3+(3p-7)b}{2}} A_2(u_{\ep}) \\
&\quad +2 b \lambda^{-1-4b} e^2 \intR S_1(u_{\ep}) \rho (\lambda^{-b} x) \,d x
+\frac{b}{2} \lambda^{-1-5 b} e^2 
\intR S_1(u_{\ep}) x \cdot \nabla \rho (\lambda^{-b}x ) \,dx.
\end{aligned}
\]
Moreover by Lemma \ref{lem:2.1}, we obtain
\[
\begin{aligned}
f^{\prime}(\lambda) 
&\le \frac{p-1+(3p-7)b}{2} \lambda^{\frac{p-3+(3p-7)b}{2}}
\left( E(u_{\ep}) - \frac{1}{2} \| \nabla u_{\ep} \|_2^2
+ C_1 e^2 s^{\frac{3}{4}} \| \rho \|_{\frac{6}{5}} \| \nabla u_{\ep} \|_2^{\frac{1}{2}} \right) \\
&\quad + C_1 \lambda^{-1-4b} e^2 
\left( \| \rho(\lambda^{-b}x) \|_{\frac{6}{5}}
+ \| (\lambda^{-b}x) \cdot \nabla \rho (\lambda^{-b}x) \|_{\frac{6}{5}} \right)
\| S_1(u_{\ep}) \|_6 \\
&\le \frac{p-1+(3p-7)b}{2} \lambda^{\frac{p-3+(3p-7)b}{2}}
\left( E(u_{\ep}) - \frac{1}{2} \| \nabla u_{\ep} \|_2^2
+ C_1 e^2 s^{\frac{3}{4}} \| \rho \|_{\frac{6}{5}} \| \nabla u_{\ep} \|_2^{\frac{1}{2}} \right) \\
&\quad + C_1 \lambda^{-1-\frac{3b}{2}} e^2 s^{\frac{3}{4}} 
\left( \| \rho \|_{\frac{6}{5}} + \| x  \cdot \nabla \rho \|_{\frac{6}{5}} \right)
\| \nabla u_{\ep} \|_2^{\frac{1}{2}},
\end{aligned}
\]
where $C_1$ is a positive constant independent of $e$, $s$, $\rho$,
$\lambda$ and $u_{\ep}$.
By the Young inequality, the fact $s \le \mu$ and from \ef{eq:3.8}, it follows that
\[
\begin{aligned}
E(u_{\ep}) - \frac{1}{2} \| \nabla u_{\ep} \|_2^2
+ C_1 e^2 s^{\frac{3}{4}} \| \rho \|_{\frac{6}{5}} \| \nabla u_{\ep} \|_2^{\frac{1}{2}}
&\le \frac{1}{4} c_{\infty} \left( \frac{\mu}{2\cdot 2^{\frac{1}{2p-4}}} \right)
- \frac{1}{4} \| \nabla u_{\ep} \|_2^2 
+ C_2 e^{\frac{8}{3}} s \| \rho \|_{\frac{6}{5}}^{\frac{4}{3}} \\
&< \frac{1}{4} c_{\infty} \left( \frac{\mu}{2\cdot 2^{\frac{1}{2p-4}}} \right)
+ C_2 e^{\frac{8}{3}} \mu \| \rho \|_{\frac{6}{5}}^{\frac{4}{3}}
< 0
\end{aligned}
\]
for some $C_2>0$, provided that $\| \rho \|_{\frac{6}{5}}$ is small .
Since $\lambda >1$, we deduce that
\[
\begin{aligned}
\frac{f'(\lambda)}{\lambda^{-1-\frac{3b}{2}}}
&\le \frac{p-1+(3p-7)b}{2} 
\lambda^{\frac{p-1+(3p-4)b}{2}} 
\left( \frac{1}{4} c_{\infty} \left( \frac{\mu}{2\cdot 2^{\frac{1}{2p-4}}} \right)
- \frac{1}{4} \| \nabla u_{\ep} \|_2^2 
+ C_2 e^{\frac{8}{3}} \mu \| \rho \|_{\frac{6}{5}}^{\frac{4}{3}} \right) \\
&\quad + C_1 e^2 \mu^{\frac{3}{4}} 
\left( \| \rho \|_{\frac{6}{5}} + \| x  \cdot \nabla \rho \|_{\frac{6}{5}} \right)
\| \nabla u_{\ep} \|_2^{\frac{1}{2}} \\
&\le \frac{p-1+(3p-7)b}{2} 
\left( \frac{1}{4} c_{\infty} \left( \frac{\mu}{2\cdot 2^{\frac{1}{2p-4}}} \right)
- \frac{1}{4} \| \nabla u_{\ep} \|_2^2 
+ C_2 e^{\frac{8}{3}} \mu \| \rho \|_{\frac{6}{5}}^{\frac{4}{3}} \right) \\
&\quad + C_1 e^2 \mu^{\frac{3}{4}} 
\left( \| \rho \|_{\frac{6}{5}} + \| x  \cdot \nabla \rho \|_{\frac{6}{5}} \right)
\| \nabla u_{\ep} \|_2^{\frac{1}{2}} \\
&\le \frac{p-1+(3p-7)b}{8} c_{\infty} \left( \frac{\mu}{2\cdot 2^{\frac{1}{2p-4}}} \right)
+ C_3 e^{\frac{8}{3}} \mu 
\left( \| \rho \|_{\frac{6}{5}} + \| x  \cdot \nabla \rho \|_{\frac{6}{5}} \right)^{\frac{4}{3}}
\end{aligned}
\]
for some $C_3>0$. 
Thus there exists $\rho_0>0$ which depends only on $e$ and $\mu$ such that
\[
\|\rho\|_{\frac{6}{5}}+\|x \cdot \nabla \rho\|_{\frac{6}{5}} \le \rho_0
\ \Rightarrow \ f^{\prime}(\lambda) < 0
\quad \text{for all} \ \lambda >1 \ \text{and} \ s \in \left[ \frac{\mu}{2\cdot 2^{\frac{1}{2p-4}}}, \mu \right].
\]
This ends the proof of \ef{eq:3.10}. 

Now from \ef{eq:3.10} and $f(1)=0$, 
it follows that $f( \lambda) < 0$ for all $\lambda >1$.
Recalling that 
$E\big( (u_{\ep})_\lambda \big)-\lambda^{1+2b} E(u_{\ep})=\lambda^{1+2b} f(\lambda)$
and from \ef{eq:3.9}, one finds that 
\[
c(\lambda s) \leq E\left((u_{\ep})_{\lambda} \right)
<\lambda^{1+2 b} E(u_{\ep}) \leq \lambda^{1+2 b}(c(s)+\varepsilon).
\]
Taking a limit $\varepsilon \rightarrow 0$, we get
\[
c(\lambda s) \leq \lambda^{1+2 b} c(s).
\]
Since $\lambda>1$, $b>0$ and from \ef{eq:3.3}, it holds that
\[
c(\lambda s) < \lambda c(s) \quad \hbox{for all } \ \lambda >1
\ \text{and} \ s \in \left[ \frac{\mu}{2\cdot 2^{\frac{1}{2p-4}}}, \mu \right].
\]

(ii) First we suppose that $\frac{\mu}{2} < \mu' < \mu$.
Then it follows that 
\[
0<\mu-\mu'<\mu', \ \frac{\mu}{\mu'} >1, \ \frac{\mu'}{\mu-\mu'} > 1 \quad \text{and} \quad 
\mu' > \frac{\mu}{2} > \frac{\mu}{2 \cdot 2^{\frac{1}{2p-4}}}.
\]
Applying (i) with $s=\mu'$ and $\lambda = \frac{\mu}{\mu'}$, one has
\[
c \left( \frac{\mu}{\mu'} \mu' \right) < \frac{\mu}{\mu'} c( \mu').
\]
Thus using Lemma \ref{lem:3.3} (iii), we obtain
\[
\begin{aligned}
c(\mu) &= c \left( \frac{\mu}{\mu'} \mu' \right) 
< \frac{\mu}{\mu'} c(\mu') 
= c(\mu') + \frac{\mu-\mu'}{\mu'} c \left( \frac{\mu'}{\mu-\mu'}(\mu-\mu') \right) \\
&\le c(\mu') + \frac{\mu-\mu'}{\mu'} \cdot \frac{\mu'}{\mu-\mu'}
c_{\infty} (\mu-\mu')
= c(\mu') + c_{\infty}(\mu-\mu'),
\end{aligned}
\]
from which we conclude.
In the case $\frac{\mu}{2}= \mu'$, we find by $\mu' = \mu-\mu'$ and \ef{eq:3.2} that
\[
c(\mu) < 2 c(\mu') = c(\mu') + c(\mu-\mu') 
\le c(\mu') + c_{\infty}(\mu-\mu').
\]
Next we assume that $\frac{\mu}{2\cdot 2^{\frac{1}{2p-4}}} \le \mu' < \frac{\mu}{2}$.
Then it holds that 
\[
0< \mu' < \mu- \mu', \ \frac{\mu}{\mu-\mu'}>1, \ \frac{\mu-\mu'}{\mu'}>1 \quad \text{and} \quad
\mu-\mu' > \frac{\mu}{2} > \frac{\mu}{2\cdot 2^{\frac{1}{2p-4}}}.  
\]
First we use (i) with $s= \mu-\mu'$ and $\lambda = \frac{\mu}{\mu-\mu'}$ to obtain
\[
c(\mu) = c \left( \frac{\mu}{\mu-\mu'} (\mu-\mu') \right) 
< \frac{\mu}{\mu-\mu'} c(\mu-\mu') 
= \frac{\mu'}{\mu-\mu'} 
c \left( \frac{\mu-\mu'}{\mu'}\mu' \right) + c(\mu-\mu').
\]
Next applying (i) with $s=\mu'$ and $\lambda= \frac{\mu-\mu'}{\mu'}$, one gets
\[
c(\mu) < c(\mu') + c(\mu-\mu').
\]
Finally by \ef{eq:3.2}, we deduce that
\[
c(\mu) < c(\mu') +c_{\infty}(\mu-\mu').
\]

When $0< \mu' \le \left( 1- \frac{1}{2 \cdot 2^{\frac{1}{2p-4}}} \right) \mu$, it follows that
$ \frac{\mu}{2\cdot 2^{\frac{1}{2p-4}}} < \mu - \mu' < \mu$.
In the case $\frac{\mu}{2} \le \mu - \mu' < \mu$, we have
\[
\frac{\mu}{\mu-\mu'}>1, \ \frac{\mu-\mu'}{\mu'} \ge 1 \quad \text{and} \quad
\mu-\mu' \ge \frac{\mu}{2 \cdot 2^{\frac{1}{2p-4}}}.
\]
Thus from (i), Lemma \ref{lem:3.3} (iii) and \ef{eq:3.2}, one gets
\[
\begin{aligned}
c(\mu) &= c \left( \frac{\mu}{\mu-\mu'} (\mu-\mu') \right) 
< \frac{\mu}{\mu-\mu'} c(\mu-\mu') 
= c(\mu-\mu') + \frac{\mu'}{\mu-\mu'} 
c \left( \frac{\mu-\mu'}{\mu'}\mu' \right) \\
&\le c(\mu-\mu') + c_{\infty}(\mu').
\end{aligned}
\]
If $\frac{\mu}{2\cdot 2^{\frac{1}{2p-4}}} \le \mu- \mu' < \frac{\mu}{2}$, it holds that
\[
\frac{\mu}{\mu'}>1, \ \frac{\mu'}{\mu-\mu'}>1 \quad \text{and} \quad
\mu' \ge \frac{\mu}{2 \cdot 2^{\frac{1}{2p-4}}}.
\]
Thus using (i) twice and \ef{eq:3.2}, we deduce that
\[
\begin{aligned}
c(\mu) &= c \left( \frac{\mu}{\mu'} \mu' \right) 
< \frac{\mu}{\mu'} c(\mu') 
= \frac{\mu-\mu'}{\mu'} c \left( \frac{\mu'}{\mu-\mu'}(\mu-\mu') \right) + c(\mu') \\
&< c(\mu-\mu') + c_{\infty} (\mu'),
\end{aligned}
\]
which completes the proof. 
\end{proof} 

\begin{remark} \label{rem:3.5}
In the case $1<p \le 2$, 
it holds that $\frac{p-1}{7-3p} \le 1$ 
and hence we cannot choose $b>0$ so that \ef{eq:3.11} holds. 
Note that the condition $b \ge 1$ was used to remove
$A_1(u)$ which is independent of $\rho$.
We also mention that the choice $b=1$ corresponds to the scaling
$u_{\lambda}(x)= \lambda^2 u( \lambda x)$. 
\end{remark}

Next we show that a larger mass cannot escape to infinity if the total mass is large.

\begin{lemma} \label{lem:3-ex}
Suppose that $2< p< \frac{7}{3}$ and let $\mu > 2\cdot 2^{\frac{1}{2p-4}} \mu^*$ be given.
We also assume \ef{ASS}.

\begin{enumerate}
\item[\rm(i)] If $0<s< \mu$ and $c(s)<0$, then $c(\lambda s) < \lambda c(s)$ for all $\lambda \ge 2^{\frac{1}{2p-4}}$.

\item[\rm(ii)] If $0< \mu' < \frac{\mu}{2 \cdot 2^{\frac{1}{2p-4}}}$ and $c(\mu')<0$, then
$c(\mu) < c(\mu') + c_{\infty}(\mu- \mu')$.
If $\Big( 1- \frac{1}{2\cdot 2^{\frac{1}{2p-4}}} \Big) \mu < \mu' < \mu$ and $c(\mu-\mu')<0$, then
$c(\mu) < c(\mu - \mu') + c_{\infty}(\mu')$.

\end{enumerate}

\end{lemma}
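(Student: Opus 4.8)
The plan is to establish (i) by a single explicit scaling and then to derive (ii) from (i) combined with Lemma~\ref{lem:3.4}~(i); the two assertions in (ii) are the same statement up to the relabelling $\mu' \leftrightarrow \mu-\mu'$, so only one needs to be treated.

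For (i), fix $s \in (0,\mu)$ with $c(s)<0$ and, for small $\varepsilon>0$, choose $u \in B(s)$ with $E(u) \le c(s)+\varepsilon < 0$. I would use the borderline scaling flagged in Remark~\ref{rem:3.5}, namely $u_\lambda(x):=\lambda^2 u(\lambda x)$ (that is $a=2$, $b=1$ in Section~2.4). Its two virtues are that $\|u_\lambda\|_2^2 = \lambda s$ (since $2a-3b=1$) and that with $b=1$ the $A_1$-term is homogeneous of the \emph{same} degree $\lambda^3 = \lambda^{1+2b}$ as the overall prefactor I pull out. Setting $f(\lambda):=\lambda^{-3}E(u_\lambda)-E(u)$, the kinetic term and the $A_1$-contribution cancel, and \ef{eq:2.5}--\ef{eq:2.6} give
\[
f(\lambda) = \frac{1-\lambda^{2p-4}}{p+1}\|u\|_{p+1}^{p+1} - 2e^2 A_2(u) - \frac{e^2\lambda^{-4}}{2}\intR S_1(u)\,\rho(\lambda^{-1}x)\,dx, \qquad f(1)=0.
\]
The last term is $\le 0$ (as $S_1(u),\rho\ge 0$) and may be discarded; and for $\lambda \ge 2^{\frac{1}{2p-4}}$ one has $\lambda^{2p-4}\ge 2$, so the first term is $\le -\frac{1}{p+1}\|u\|_{p+1}^{p+1}$. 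It then suffices to check $\frac{1}{p+1}\|u\|_{p+1}^{p+1} > 2e^2|A_2(u)|$. This is exactly where \ef{ASS} enters: by homogeneity it is equivalent to $\|\nabla u\|_2^2 + 8e^2 A_2(u)\ge 0$ for all $u$, i.e. $2e^2|A_2(u)| \le \frac14\|\nabla u\|_2^2$; together with $E(u)<0$ and $A_1(u)\ge 0$ this yields $\frac{1}{p+1}\|u\|_{p+1}^{p+1} > \frac12\|\nabla u\|_2^2 - 2e^2|A_2(u)| \ge \frac14\|\nabla u\|_2^2 \ge 2e^2|A_2(u)|$. Hence $f(\lambda)<0$, so $E(u_\lambda)<\lambda^3 E(u) \le \lambda^3(c(s)+\varepsilon)$; letting $\varepsilon\to 0$ gives $c(\lambda s)\le \lambda^3 c(s)$, and since $\lambda^3>\lambda$ and $c(s)<0$ this is $<\lambda c(s)$.

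For (ii), let $\nu$ denote the small mass (either $\mu'$ or $\mu-\mu'$), so $c(\nu)<0$ and $0<\nu<\mu/(2\cdot 2^{\frac{1}{2p-4}})$, and let $\mu-\nu$ be the large mass. The hypotheses force $\mu-\nu > \mu/2 \ge \mu/(2\cdot 2^{\frac{1}{2p-4}})$, so Lemma~\ref{lem:3.4}~(i) applies to $\mu-\nu$, and also $\frac{\mu-\nu}{\nu}=\frac{\mu}{\nu}-1 > 2\cdot 2^{\frac{1}{2p-4}}-1 \ge 2^{\frac{1}{2p-4}}$. Applying Lemma~\ref{lem:3.4}~(i) with $s=\mu-\nu$, $\lambda=\frac{\mu}{\mu-\nu}>1$, then splitting $\frac{\mu}{\mu-\nu}=1+\frac{\nu}{\mu-\nu}$ and applying part (i) with $s=\nu$, $\lambda=\frac{\mu-\nu}{\nu}\ge 2^{\frac{1}{2p-4}}$, I obtain
\[
c(\mu) < \frac{\mu}{\mu-\nu}c(\mu-\nu) = c(\mu-\nu) + \frac{\nu}{\mu-\nu}\,c\!\left(\frac{\mu-\nu}{\nu}\nu\right) < c(\mu-\nu) + c(\nu).
\]
Finally \ef{eq:3.2} gives $c(\mu-\nu)\le c_\infty(\mu-\nu)$, hence $c(\mu)<c(\nu)+c_\infty(\mu-\nu)$, which is precisely both claimed inequalities.

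The main obstacle is (i), and specifically the sign of the $\rho$-dependent part of $f$. Because $s$ may now be arbitrarily small, the device used in Lemma~\ref{lem:3.4}~(i)---exploiting $s\le\mu$ to absorb the $\rho$-terms into $-\frac14\|\nabla u_\varepsilon\|_2^2$---is unavailable, so the new hypothesis \ef{ASS} is indispensable: it is exactly what makes the well term $2e^2|A_2(u)|$ controllable by a quarter of the kinetic energy uniformly in $u$, and hence by $\frac{1}{p+1}\|u\|_{p+1}^{p+1}$ once $E(u)<0$. The threshold $2^{\frac{1}{2p-4}}$ is forced by the requirement $\lambda^{2p-4}\ge 2$, i.e. that the negative $L^{p+1}$-coefficient $1-\lambda^{2p-4}$ be at most $-1$; this is the only place the constant appears in (i), and it propagates to the mass threshold $\mu/(2\cdot 2^{\frac{1}{2p-4}})$ in (ii) through the requirement $\frac{\mu-\nu}{\nu}\ge 2^{\frac{1}{2p-4}}$. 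Once (i) is available, (ii) is a purely algebraic two-step scaling, mirroring Lemma~\ref{lem:3.4}~(ii), the only novelty being that the large piece is scaled up by Lemma~\ref{lem:3.4}~(i) while the small piece is scaled up by part (i).
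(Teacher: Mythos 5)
Your proof is correct and follows essentially the same route as the paper's: the same scaling $u_\lambda(x)=\lambda^2 u(\lambda x)$ with the kinetic and $A_1$ terms cancelling in $f(\lambda)=\lambda^{-3}E(u_\lambda)-E(u)$, the same use of \ef{ASS} (rewritten as $\|\nabla u\|_2^2+8e^2A_2(u)\ge 0$) to deduce $-2e^2A_2(u)\le\frac{1}{p+1}\|u\|_{p+1}^{p+1}$ from $E(u)\le 0$, the same threshold $\lambda^{2p-4}\ge 2$, and the same two-step scaling (Lemma \ref{lem:3.4}~(i) on the large mass, part~(i) on the small mass, then \ef{eq:3.2}) for part~(ii), which the paper simply writes out for both symmetric cases rather than relabelling.
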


\begin{proof}
(i) First we claim that if $E(u) \le 0$, it holds that
\begin{equation} \label{eq:ex-1}
-2e^2 A_2(u) \le \frac{1}{p+1} \| u \|_{p+1}^{p+1}.
\end{equation}
Indeed from \ef{ASS}, \ef{eq:2.3} and \ef{eq:2.4}, we have
\begin{align*}
0 \ge E(u) &= \frac{1}{2} \| \nabla u \|_2^2 - \frac{1}{p+1} \| u \|_{p+1}^{p+1} + e^2 A_1(u) + 2e^2 A_2(u) \\
&\ge \frac{1}{2} \IT \left( | \nabla u|^2 + 2e^2 S_2(x) |u|^2 \right) dx - \frac{1}{p+1} \| u \|_{p+1}^{p+1} - 2e^2 A_2(u) \\
&\ge - \frac{1}{p+1} \| u \|_{p+1}^{p+1} - 2e^2 A_2(u).
\end{align*}

Now for any $\ep>0$, we take $u_{\ep} \in B(s)$ so that 
\[
E(u_{\ep}) \le c(s) + \ep \le 0
\]
and put $(u_{\ep})_{\lambda}(x) = \lambda^2 u_{\ep} (\lambda x)$.
As in the proof of Lemma \ref{lem:3.4}, one finds that $\| (u_{\ep} )_{\lambda} \|_2^2 = \lambda s$ and
\begin{align*}
f(\lambda) &:= \frac{1}{\lambda^3} E \big( (u_{\ep})_{\lambda} \big) - E(u_{\ep}) \\
&= \frac{1}{p+1} \left( 1- \lambda^{2p-4} \right) \| u_{\ep} \|_{p+1}^{p+1}
-2e^2 A_2(u_{\ep}) - \frac{\lambda^{-4}}{2} e^2 
\IT S_1(u_{\ep}) \rho( \lambda^{-1} x) \,dx.
\end{align*}
Since $E(u_{\ep}) \le 0$, we are able to use \ef{eq:ex-1} to obtain
\[
f(\lambda) \le \frac{1}{p+1} \left( 2 - \lambda^{2p-4} \right) \| u_{\ep} \|_{p+1}^{p+1}
\le 0 \quad \text{for all} \ \lambda \ge 2^{\frac{1}{2p-4}},
\]
which yields that
\[
c(\lambda s) \le \lambda^3 E(u_{\ep}) \le \lambda^3 \big( c(s) + \ep \big).
\]
Taking a limit $\ep \to 0$ and recalling the fact $c(s)<0$, we get
\[
c(\lambda s) \le \lambda^3 c(s) < \lambda c(s) \quad \text{for all} \ \lambda \ge 2^{\frac{1}{2p-4}}.
\]

(ii) Suppose that $0< \mu' < \frac{\mu}{2\cdot 2^{\frac{1}{2p-4}}}$ and $c(\mu')<0$.
Then it follows that $0< \mu' < \mu-\mu'$,  
\[
\mu > 2 \cdot 2^{\frac{1}{2p-4}} \mu' > \left( 1+ 2^{\frac{1}{2p-4}} \right) \mu', \ 
\frac{\mu-\mu'}{\mu'} > 2^{\frac{1}{2p-4}} \quad \text{and} \quad
\mu-\mu' > \left( 1- \frac{1}{2 \cdot 2^{\frac{1}{2p-4}}} \right) \mu > \frac{\mu}{2 \cdot 2^{\frac{1}{2p-4}}}.
\]
By using Lemma \ref{lem:3.4} (i) with $s= \mu-\mu'$ and $\lambda = \frac{\mu}{\mu-\mu'}>1$, one has
\[
c(\mu) = c \left( \frac{\mu}{\mu-\mu'} (\mu-\mu') \right) 
< \frac{\mu}{\mu-\mu'} c(\mu-\mu') 
= \frac{\mu'}{\mu-\mu'} 
c \left( \frac{\mu-\mu'}{\mu'}\mu' \right) + c(\mu-\mu').
\]
Next we apply (i) with $s=\mu'$ and $\lambda= \frac{\mu-\mu'}{\mu'}$ to obtain
\[
c(\mu) < c(\mu') + c(\mu- \mu').
\]
Finally from \ef{eq:3.2}, we deduce that
\[
c(\mu) < c(\mu') + c_{\infty} (\mu - \mu').
\]

When $\left( 1- \frac{1}{2 \cdot 2^{\frac{1}{2p-4}}} \right) \mu < \mu' < \mu$ and $c(\mu-\mu')<0$, 
it holds that 
\[
0<\mu-\mu'< \mu', \ \frac{\mu'}{\mu-\mu'} > 2^{\frac{1}{2p-4}} \quad \text{and} \quad
\mu' > \left( 1- \frac{1}{2\cdot 2^{\frac{1}{2p-4}}} \right) \mu > \frac{\mu}{2 \cdot 2^{\frac{1}{2p-4}}}.
\]
Applying Lemma \ref{lem:3.4} (i) with $s=\mu'$ and $\lambda = \frac{\mu}{\mu'}>1$, one finds that 
\[
c(\mu) = c \left( \frac{\mu}{\mu'} \mu' \right) 
< \frac{\mu}{\mu'} c(\mu') 
= \frac{\mu-\mu'}{\mu'} c \left( \frac{\mu'}{\mu-\mu'}(\mu-\mu') \right) + c(\mu').
\]
Next we use (i) with $s= \mu-\mu'$ and $\lambda = \frac{\mu'}{\mu-\mu'}$ to conclude that
\[
c(\mu) < c(\mu-\mu') + c(\mu') \le c(\mu-\mu') + c_{\infty}(\mu').
\]
This completes the proof.
\end{proof} 

The next lemma deals with the compactness of any minimizing sequence for 
\ef{eq:3.1}.

\begin{lemma} \label{lem:3.6}
Suppose that $2<p<\frac{7}{3}$.
For any $\mu > 2\cdot 2^{\frac{1}{2p-4}} \mu^*$, there exists 
$\rho_0= \rho_0(e, \mu)>0$ such that 
if $\|\rho\|_{\frac{6}{5}}+\|x \cdot \nabla \rho\|_{\frac{6}{5}} \le \rho_0$,
the following properties hold.

Let $\{ u_j \} \subset H^1(\R^3,\C)$ be a sequence satisfying 
$\| u_j \|_{2}^2 \to \mu$ and $E(u_j) \to c(\mu)$.
Then there exist a subsequence of $\{ u_j \}$ which is still denoted by the same,
a sequence $\{ y_j \} \subset \R^3$ and $u_{\mu} \in H^1(\R^3,\C)$ such that
$u_j(\cdot-y_j) \to u_{\mu}$ in $H^1(\R^3,\C)$ and $E(u_{\mu})=c(\mu)$.
\end{lemma}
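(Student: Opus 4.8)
The plan is to apply the concentration--compactness principle of Lions to the mass densities $|u_j|^2$, and to use the strict sub-additivity packaged in Lemmas \ref{lem:3.4} and \ref{lem:3-ex} to kill the non-compact alternatives. First I would note that $\{u_j\}$ is bounded in $H^1(\R^3,\C)$: from $\|u_j\|_2^2\to\mu$ together with the coercivity estimate derived in the proof of Lemma \ref{lem:3.1}, the convergence $E(u_j)\to c(\mu)$ controls $\|\nabla u_j\|_2$ from above. Passing to a subsequence, the densities of total mass $\|u_j\|_2^2\to\mu$ obey one of the three alternatives: vanishing, dichotomy, or compactness up to translation.

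\emph{Vanishing} is excluded immediately. If $\sup_{y\in\R^3}\int_{B_R(y)}|u_j|^2\,dx\to0$ for every $R>0$, then $u_j\to0$ in $L^q(\R^3)$ for all $2<q<6$; in particular $\|u_j\|_{p+1}\to0$ and, by the bound $|A_2(u_j)|\le C\|\rho\|_{\frac65}\|u_j\|_{\frac{12}{5}}^2$ of Lemma \ref{lem:2.1}, also $A_2(u_j)\to0$. Since $A_1\ge0$ by \ef{eq:2.3}, this forces $\liminf_j E(u_j)\ge0$, contradicting $c(\mu)<0$ (see \ef{eq:3.3}).

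\emph{Dichotomy} is the step I expect to be the main obstacle, because $E$ is not translation invariant and one must track which mass actually feels the doping potential $S_2$. Here there is a mass $\alpha\in(0,\mu)$ and, by the usual cut-off procedure, two sequences $u_j^{(1)},u_j^{(2)}$ bounded in $H^1$ with $\|u_j^{(1)}\|_2^2\to\alpha$, $\|u_j^{(2)}\|_2^2\to\mu-\alpha$, whose supports drift infinitely apart, so that $\liminf_j(\|\nabla u_j\|_2^2-\|\nabla u_j^{(1)}\|_2^2-\|\nabla u_j^{(2)}\|_2^2)\ge0$ and $\|u_j\|_{p+1}^{p+1}=\|u_j^{(1)}\|_{p+1}^{p+1}+\|u_j^{(2)}\|_{p+1}^{p+1}+o(1)$. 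The separation of supports annihilates the cross term in $A_1$ exactly as the term $R(k)$ in the proof of Lemma \ref{lem:3.3}(i), and any bump whose support escapes to infinity has vanishing $A_2$-contribution by \ef{eq:2.12}. Since at most one bump can remain in a bounded region, pairing the localized bump (if any) with $E$ and the escaping bump(s) with $E_\infty$ yields an inequality of the form $c(\mu)\ge c(\alpha)+c_\infty(\mu-\alpha)$, or its symmetric analogue, or $c(\mu)\ge c_\infty(\alpha)+c_\infty(\mu-\alpha)$ if both escape. The last case is impossible by the strict sub-additivity of $c_\infty$ together with \ef{eq:3.2}. In the remaining cases, when the localized piece carries mass at least $\tfrac{\mu}{2\cdot2^{1/(2p-4)}}$ the contradiction is Lemma \ref{lem:3.4}(ii); when it carries less mass but has negative $c$-value it is Lemma \ref{lem:3-ex}(ii); and when it carries less mass with zero $c$-value one instead uses $c(\mu)\le c_\infty(\mu)<c_\infty(\mu-\alpha)$, coming from \ef{eq:3.2} and the strict monotonicity of $c_\infty$. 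The threshold $\mu>2\cdot2^{1/(2p-4)}\mu^*$ is exactly what forces the escaping mass to exceed $\mu^*$, so that $c_\infty$ of it is strictly negative and these comparisons are available for every splitting $\alpha$.

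\emph{Compactness} is then the only surviving alternative: there exist $y_j\in\R^3$ such that $u_j(\cdot-y_j)$ is tight. I would first show $\{y_j\}$ is bounded. Otherwise, along $|y_j|\to\infty$, the tight weak limit $u_\mu$ of $u_j(\cdot-y_j)$ satisfies $\|u_\mu\|_2^2=\mu$, and since $E_\infty$ is translation invariant while $A_2(u_j)\to0$ by \ef{eq:2.12}, one gets $c(\mu)=\lim E(u_j)\ge E_\infty(u_\mu)\ge c_\infty(\mu)$; with \ef{eq:3.2} this gives $c(\mu)=c_\infty(\mu)$ and $E_\infty(u_\mu)=c_\infty(\mu)$, whence $E(u_\mu)=c_\infty(\mu)+2e^2A_2(u_\mu)<c_\infty(\mu)=c(\mu)$ because $A_2(u_\mu)<0$ for $u_\mu\not\equiv0$ by \ef{eq:1.8} — contradicting $u_\mu\in B(\mu)$. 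Hence $\{y_j\}$ is bounded, so $u_j$ itself is tight around the origin and we may take $y_j=0$. Then $u_j\rightharpoonup u_\mu$ in $H^1$ with $u_j\to u_\mu$ strongly in $L^q$ for $2\le q<6$; in particular $\|u_\mu\|_2^2=\mu$, $\|u_j\|_{p+1}^{p+1}\to\|u_\mu\|_{p+1}^{p+1}$, and $A_1(u_j)\to A_1(u_\mu)$, $A_2(u_j)\to A_2(u_\mu)$ by Lemma \ref{lem:2.2}. Weak lower semicontinuity of the gradient gives $E(u_\mu)\le\liminf E(u_j)=c(\mu)$, while $u_\mu\in B(\mu)$ forces $E(u_\mu)\ge c(\mu)$; hence $E(u_\mu)=c(\mu)$, the gradient norms converge, and $u_j\to u_\mu$ strongly in $H^1(\R^3,\C)$, as required.
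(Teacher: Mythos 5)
Your proposal is correct and follows the same overall strategy as the paper: Lions' trichotomy applied to $|u_j|^2$, vanishing excluded by $c(\mu)<0$, dichotomy excluded by the sub-additivity package of Lemmas \ref{lem:3.4}(ii) and \ref{lem:3-ex}(ii) together with the strict monotonicity of $c_\infty$ above $\mu^*$, and strong convergence in the compactness case. Two points of execution differ, and both are worth recording. First, in the dichotomy step you organize the cases by which bumps escape to infinity, which forces you to treat a third alternative (``both escape'') and to invoke the strict sub-additivity $c_\infty(\mu)<c_\infty(\alpha)+c_\infty(\mu-\alpha)$ of the autonomous problem; this fact is indeed available from the cited literature (it follows from $c_\infty(\lambda s)\le \lambda^3 c_\infty(s)$ under the scaling $\lambda^2u(\lambda x)$ when $c_\infty(s)<0$), but the paper avoids needing it altogether: it splits only on whether $\{y_j\}$ stays bounded, pairs the escaping bump $u_{j,1}$ with $E_\infty$ via \eqref{eq:3.15}, and pairs the \emph{other} bump $u_{j,2}$ with $E$ regardless of where its center $z_j$ sits, using only the trivial bound $E(v)\ge c(s)$ for $\|v\|_2^2=s$. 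Second, your compactness step is more careful than the paper's own write-up: the paper asserts $c(\mu)=\liminf_j E\big(u_j(\cdot-y_j)\big)$, which implicitly uses translation invariance of $E$ and hence implicitly requires $\{y_j\}$ bounded, whereas you prove this boundedness explicitly --- if $|y_j|\to\infty$ then $A_2(u_j)\to0$ forces $c(\mu)=c_\infty(\mu)$ with a minimizer $u_\mu$ of $c_\infty(\mu)$, and then $E(u_\mu)=c_\infty(\mu)+2e^2A_2(u_\mu)<c(\mu)$ contradicts $u_\mu\in B(\mu)$, the strictness coming from \eqref{eq:1.8}. So your argument fills a genuine (though repairable) gap in the published proof; the trade-off is that the paper's dichotomy bookkeeping is leaner, while yours needs one extra known fact about $c_\infty$.
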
 

Lemma \ref{lem:3.6} seems to be rather standard,
once we have established the strict sub-additivity.
However since \ef{eq:1.1} is non-autonomous, 
small and large masses play different roles,
causing that the proof of Lemma \ref{lem:3.6} is not straightforward.

\begin{proof}
First we observe by the proof of Lemma \ref{lem:3.1} 
that $\| u_j \|_{H^1}$ is bounded.
Moreover by replacing $u_j$ by $ \frac{ \sqrt{\mu}}{ \| u_j \|_{2}^2} u_j$,
we may assume that $\{ u_j \}$ is a minimizing sequence of $c(\mu)$.
It is also important to note that $c(\mu) < 0$ from \ef{eq:3.3}.

Now we apply the concentration compactness principle \cite[Lemma I.1, p. 115]{L1}
to the sequence $\rho_j(x)= | u_j(x)|^2$.
It is well-known that the behavior of the sequence $(\rho_j)_{j\in\N}$
is governed by the three possibilities:
Compactness, Vanishing and Dichotomy.
Our goal is to show that Compactness occurs.

If Vanishing occurs, there exists a subsequence of $\{ \rho_j \}$, 
still denoted by $\{ \rho_j \}$, such that
$$
\lim_{j \to \infty} \sup_{y\in\R^3} \int_{B_R(y)} \rho_j(x) \,dx=0
\quad \hbox{for all} \ R>0.$$
Here $B_R(y)$ describes a ball of radius $R$ with the center at $y \in \R^3$.
Then by \cite[Lemma I.1, P. 231]{L2}, it follows that 
$u_j \to 0$ in $L^q(\R^3)$ for any $q\in (2,6)$.
On the other hand since $\{ u_j \}$ is a minimizing sequence for $c(\mu)$, 
one has by Lemma \ref{lem:2.1} that
\begin{align*}
c(\mu)+o(1) &= E(u_j) 
= \frac{1}{2} \| \nabla u_j \|_{2}^2 -\frac{1}{p+1} \| u_j \|_{p+1}^{p+1}
+ e^2 A_1(u_j) +2e^2 A_2(u_j) \\
&\ge -\frac{1}{p+1} \| u_j \|_{p+1}^{p+1}
-C \| \rho \|_{\frac{6}{5}} \| u_j\|_{\frac{12}{5}}^2. 
\end{align*}
Passing a limit $j \to \infty$, we get $0>c(\mu) \ge 0$.
This is a contradiction, which rules out Vanishing.

Next we assume that Dichotomy occurs.
Then by a standard argument (see \cite[Section I.2]{L2} or 
\cite[Proposition 1.7.6, P. 23]{Ca}), 
there exist $\mu' \in (0,\mu)$, $R_1$, $R_2>0$, 
$\{ y_j \}$, $\{ z_j \} \subset \R^3$
and $\{ u_{j,1} \}$, $\{ u_{j,2} \} \subset 
H^1(\R^3,\C)$ so that
\[
\| u_{j,1} \|_{L^2}^2 \to \mu', \quad
\| u_{j,2} \|_{L^2}^2 \to \mu-\mu',
\]
\begin{equation} \label{eq:3.12}
{\rm supp}(u_{j,1}) \subset B_{R_1}(y_j), \ 
{\rm supp}(u_{j,2}) \subset B_{R_2}(z_j), \ 
|y_j-z_j| \to \infty, 
\end{equation}
\begin{equation} \label{eq:3.13}
\| u_j -u_{j,1} -u_{j,2} \|_{q} \to 0 \quad \hbox{for all} \ 2 \le q<6,
\end{equation}
\begin{equation} \label{eq:3.14}
\IT \big( | \nabla u_j|^2
-|\nabla u_{j,1}|^2 -| \nabla u_{j,2}|^2 \big) \,dx \ge o(1).
\end{equation}
Moreover replacing $u_{j,1}$, $u_{j,2}$ by $\frac{ \sqrt{\mu'}}{ \| u_{j,1} \|_{2}}u_{j,1}$, 
$\frac{ \sqrt{ \mu-\mu'}}{ \| u_{j,2} \|_{2}} u_{j,2}$ respectively, 
we may assume that $\| u_{j,1} \|_{2}^2 = \mu'$, $\| u_{j,2} \|_{2}^2=\mu-\mu'$
and \ef{eq:3.12}-\ef{eq:3.14} hold.
Now from \ef{eq:3.12}, one has
\begin{align*}
\IT \IT \frac{ | u_{j,1}(x) |^2 |u_{j,2}(y) |^2 }{|x-y|} \,dx \,dy
&= \int_{ {\rm supp}(u_{j,2})} \int_{ {\rm supp}(u_{j,1})} 
\frac{ |u_{j,1}(x)|^2 | u_{j,2}(y) |^2}{|x-y|} \,dx \,dy \\
& \le \frac{1}{|y_j-z_j| - R_1-R_2}
\| u_{j,1} \|_{2}^2 \| u_{j,2} \|_{2}^2 \to 0 
\quad \hbox{as} \ j \to \infty.
\end{align*}
Using \ef{eq:3.13} and arguing as in the proof of Lemma 2.2 in \cite{ZZ},
a direct computation yields that 
\begin{align*}
& A_1(u_j)-A_1(u_{j,1})-A_1(u_{j,2}) 
= \IT S_1(u_j)|u_j|^2-S_1(u_{j,1})|u_{j,1}|^2-S_1(u_{j,2})|u_{j,2}|^2 \,dx \\
&= \IT \Big\{ \big( S_1(u_j)|u_j|+S_1(u_{j,1})|u_{j,1}|+S_1(u_{j,2})|u_{j,2}| \big)
\big( |u_j|-|u_{j,1}|-|u_{j,2}| \big) \\
&\qquad \qquad 
+|u_j| \big( |u_{j,1}| +|u_{j,2}| \big) \big( S_1(u_j)-S_1(u_{j,1})-S_1(u_{j,2}) \big) \\
&\qquad \qquad + | u_{j,1} | |u_{j,2}| \big( S_1(u_{j,1})+S_1(u_{j,2}) \big)
+|u_j| \big( |u_{j,1}|S_1(u_{j,2}) +|u_{j,2}|S_1(u_{j,1}) \big) \Big\} \,dx \\
&\to 0 \quad (n\to \infty).
\end{align*}
Similarly, one finds that
\[
A_2(u_j)-A_2(u_{j,1})-A_2(u_{j,2}) \to 0.
\]
Moreover from \ef{eq:2.12} and \ef{eq:3.12}, it holds that
\begin{equation} \label{eq:3.15}
A_2(u_{j,2}) \to 0 \ \text{if $|y_j|$ is bounded} \quad \text{and} \quad
A_2(u_{j,1}) \to 0 \ \text{if} \ |y_j| \to \infty
\quad \text{as} \ j \to \infty. 
\end{equation} 

Now we suppose that $|y_j|$ is bounded.
Then we deduce from \ef{eq:3.13}, \ef{eq:3.14} and \ef{eq:3.15} that
\[
c(\mu) 
= E(u_j) +o(1) 
\ge E(u_{j,1}) + E_{\infty}(u_{j,2}) +o(1) 
\ge c(\mu') + c_{\infty}(\mu-\mu') +o(1).
\]
Taking $\dis \liminf_{j \to \infty}$ in both sides, one finds that 
\begin{equation} \label{eq:ex-2}
c(\mu) \ge c(\mu') + c_{\infty}(\mu-\mu').
\end{equation}
When $\frac{\mu}{2\cdot 2^{\frac{1}{2p-4}}} \le \mu' < \mu$, 
we have a contradiction by Lemma \ref{lem:3.4} (ii).
If $0< \mu' < \frac{\mu}{2\cdot 2^{\frac{1}{2p-4}}}$ and $c(\mu')=0$,
one has from \ef{eq:3.2} and \ef{eq:ex-2} that
\[
c_{\infty}(\mu) \ge c(\mu) \ge c_{\infty}(\mu-\mu').
\]
But since $\mu-\mu' > \mu^*$, it follows that $c_{\infty}(\mu-\mu') > c_{\infty}(\mu)$,
which leads to  a contradiction.
In the case $0< \mu' < \frac{\mu}{2\cdot 2^{\frac{1}{2p-4}}}$ and $c(\mu')<0$,
we are able to apply Lemma \ref{lem:3-ex} (ii) to arrive at a contradiction.

Next we assume that $|y_j| \to \infty$.
Then it follows from \ef{eq:3.13}, \ef{eq:3.14} and \ef{eq:3.15} that
\begin{equation} \label{eq:ex-3}
c(\mu) \ge c( \mu-\mu') + c_{\infty}(\mu').
\end{equation}
When $0< \mu' < \left( 1- \frac{1}{2\cdot 2^{\frac{1}{2p-4}}} \right) \mu$, we have a contradiction by Lemma \ref{lem:3.4} (ii).
If $\left( 1- \frac{1}{2\cdot 2^{\frac{1}{2p-4}}} \right) \mu \le \mu' < \mu$ and $c(\mu - \mu') =0$,
one finds that $c_{\infty}(\mu) \ge c(\mu) \ge c_{\infty} (\mu')$,
which is impossible because $c_{\infty}(\mu') > c_{\infty}(\mu)$.
In the case $\left( 1- \frac{1}{2\cdot 2^{\frac{1}{2p-4}}} \right) \mu \le \mu' < \mu$ and $c(\mu-\mu')<0$,
we can use Lemma \ref{lem:3-ex} (ii) to derive a contradiction.
Thus in all cases, we are able to reach a contradiction and hence Dichotomy does not occur. 

The only remaining possibility is Compactness, that is, 
there exists $\{ y_j \} \subset \R^3$ such that for all $\varepsilon>0$,
there exists $R_{\varepsilon}>0$ satisfying
\begin{equation} \label{eq:3.16}
\int_{ B_{R_{\varepsilon}} (y_j)} |u_j(x)|^2 \,dx \ge \mu - \varepsilon.
\end{equation}
Since $ \| u_j \|_{H^1}$ is bounded, there exists $u_{\mu} \in H^1(\R^3,\C)$ such that
up to a subsequence, \linebreak
$u_j(\cdot-y_j) \rightharpoonup u_{\mu}$ in $H^1(\R^3,\C)$.
Then from \ef{eq:3.16}, it follows that $ u_j(\cdot-y_j) \to u_{\mu}$ in $L^q(\R^3,\C)$ 
for any $2 \le q<6$.
By the weak lower semi-continuity of $ \| \nabla \cdot \|_{2}$
and by Lemma \ref{lem:2.2}, we get
\[
c(\mu) = \liminf_{j \to \infty} E \big( u_j( \cdot-y_j) \big)
\ge E(u_{\mu}) \ge c(\mu).
\]
This implies that $E(u_{\mu})=c(\mu)$ and 
$ \| \nabla u_j(\cdot-y_j) \|_{2} \to \| \nabla u_{\mu} \|_{2}$.
Thus we obtain $u_j(\cdot-y_j) \to u_{\mu}$ in $H^1(\R^3,\C)$ 
and hence the proof is complete.
\end{proof}

\begin{remark} \label{rem:3.7}
By the relation between $E$ and $\mathcal{E}$ in \ef{eq:2.4},
the relative compactness of minimizing sequences for \ef{eq:1.5}
also holds,
which will be applied to show the orbital stability later.  
\end{remark}

Now suppose that $u \in H^1(\R^3,\C)$ is a minimizer of \ef{eq:3.1}, 
that is, $E(u)=c(\mu)$ and $\|u\|_{2}^2=\mu$.
Up to a phase shift, we may assume that $u$ is real-valued.
Indeed by the well-known pointwise inequality $\big| \nabla |u| \big| \le | \nabla u|$,
one can see that $|u|$ is also a minimizer
and hence $u$ can be chosen to be real-valued.

Furthermore there exists a Lagrange multiplier $\omega=\omega(\mu) \in \R$ 
such that $u$ satisfies \ef{eq:1.1} with some constant $\omega(\mu)$.

\begin{lemma} \label{lem:3.8}
Suppose that $2 < p < \frac{7}{3}$ and 
let $\mu> 2\cdot 2^{\frac{1}{2p-4}} \mu^*$ be given. 
Then there exists 
$\rho_0= \rho_0(e, \mu)>0$ such that 
if $\|\rho\|_{\frac{6}{5}}+\|x \cdot \nabla \rho\|_{\frac{6}{5}} \le \rho_0$,
the Lagrange multiplier $\omega=\omega(\mu)$ is positive.
\end{lemma}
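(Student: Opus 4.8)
The plan is to read off the sign of $\omega$ directly from the identity in Lemma~\ref{lem:2.3}. Since the minimizer $u$ satisfies $E(u)=c(\mu)$ and $\|u\|_2^2=\mu$, and moreover $u$ solves \eqref{eq:1.1} with multiplier $\omega$, that identity rearranges to
\[
\frac{(3p-5)\mu}{2}\,\omega
= 2(p-2)\|\nabla u\|_2^2 + 8e^2 A_2(u) - (3-p)e^2 A_3(u) - (5p-7)c(\mu).
\]
For $2<p<\frac{7}{3}$ the coefficients $5p-7$, $3p-5$, $p-2$ and $3-p$ are all strictly positive, so it suffices to prove that the right-hand side is positive.

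First I would extract a positive reserve that is independent of $\rho$. By \eqref{eq:3.2}--\eqref{eq:3.3} we have $c(\mu)\le c_\infty(\mu)<0$, and $c_\infty(\mu)$ is defined with $\rho\equiv0$; hence $-(5p-7)c(\mu)\ge -(5p-7)c_\infty(\mu)=:\delta_0>0$, with $\delta_0$ depending only on $e$, $\mu$, $p$. Combined with $2(p-2)\|\nabla u\|_2^2\ge0$, this furnishes a fixed positive contribution $\delta_0$ to the right-hand side.

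Next I would show that the two $\rho$-dependent terms $8e^2A_2(u)$ and $-(3-p)e^2A_3(u)$ are negligible once $\rho_0$ is small. Lemma~\ref{lem:2.1} gives $|A_2(u)|\le C\|\rho\|_{\frac{6}{5}}\|u\|_2^{3/2}\|\nabla u\|_2^{1/2}$ and $|A_3(u)|\le C\|x\cdot\nabla\rho\|_{\frac{6}{5}}\|u\|_2^{3/2}\|\nabla u\|_2^{1/2}$. Here $\|u\|_2^2=\mu$ is fixed, and a gradient bound $\|\nabla u\|_2\le M(e,\mu)$ holds uniformly for $\|\rho\|_{\frac{6}{5}}\le\rho_0\le1$: applying the coercivity estimate from the proof of Lemma~\ref{lem:3.1} to $E(u)=c(\mu)<0$ yields $\tfrac14\|\nabla u\|_2^2\le C\mu^{\frac{5-p}{7-3p}}+Ce^{8/3}\mu\|\rho\|_{\frac{6}{5}}^{4/3}$, which is bounded independently of $\rho$. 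Consequently $8e^2|A_2(u)|+(3-p)e^2|A_3(u)|\le C(e,\mu)\big(\|\rho\|_{\frac{6}{5}}+\|x\cdot\nabla\rho\|_{\frac{6}{5}}\big)$.

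Finally, shrinking $\rho_0=\rho_0(e,\mu)$ (relative to the value supplied by Lemma~\ref{lem:3.6}) so that $C(e,\mu)\rho_0<\delta_0/2$, the right-hand side of the displayed identity is at least $\delta_0/2>0$, and dividing by $\tfrac{(3p-5)\mu}{2}>0$ gives $\omega>0$. I expect the only delicate point to be the uniformity in $\rho$: one must guarantee that the positive reserve $\delta_0$ (coming from $c_\infty(\mu)$) and the gradient bound $M$ do not degenerate as $\rho$ varies within $\|\rho\|_{\frac{6}{5}}+\|x\cdot\nabla\rho\|_{\frac{6}{5}}\le\rho_0$, which is precisely what allows a single choice of $\rho_0$ to defeat the perturbative terms.
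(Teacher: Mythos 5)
Your proof is correct and takes essentially the same approach as the paper: both start from the identity of Lemma \ref{lem:2.3} evaluated at a minimizer, use $c(\mu)\le c_{\infty}(\mu)<0$ (valid since $\mu>\mu^*$) as a $\rho$-independent positive reserve, bound $A_2(u)$ and $A_3(u)$ via Lemma \ref{lem:2.1}, and conclude by shrinking $\rho_0$. The only (minor) difference is how the factor $\|\nabla u\|_2^{1/2}$ is handled: you derive a uniform gradient bound from the coercivity estimate in the proof of Lemma \ref{lem:3.1}, whereas the paper absorbs it into the nonnegative term $2(p-2)\|\nabla u\|_2^2$ by Young's inequality, choosing $\varepsilon\in\bigl(0,2(p-2)\bigr)$, so that no a priori bound on $\|\nabla u\|_2$ is required; both devices are valid.
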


\begin{proof}
Let $u$ be a minimizer for $c(\mu)$. 
Then by Lemma \ref{lem:2.3} and from \ef{eq:3.3}, 
it follows that 
\[
\frac{(3 p-5) \omega \mu}{2} \ge
-\frac{5 p-7}{2} c_{\infty}(\mu) +2(p-2)\| \nabla u \|_2^2 + 8 e^2 A_2(u) -(3-p) A_3(u).
\]
Since $p>2$, one can choose $\varepsilon \in \big( 0, 2(p-2) \big)$.
Lemma \ref{lem:2.1} and the Young inequality yield that
\[
\begin{aligned}
\frac{(3 p-5) \omega \mu}{2} 
&\ge - \frac{5 p-7}{2} c_{\infty} (\mu) +2(p-2)\| \nabla u \|_2^2 
- C e^2 \mu^{\frac{3}{4}} \left( \| \rho\|_{\frac{6}{5}} 
+ \| x \cdot \nabla \rho \|_{\frac{6}{5}}  \right) \| \nabla u \|_2^{\frac{1}{2}} \\ 
&\ge - \frac{5 p-7}{2} c_{\infty} (\mu) + \big( 2(p-2)-\varepsilon \big) \| \nabla u \|_2^2 
- \frac{C}{\ep^{\frac{1}{3}}} e^{\frac{8}{3}} \mu 
\left( \| \rho\|_{\frac{6}{5}} + \| x \cdot \nabla \rho \|_{\frac{6}{5}} \right)^{\frac{4}{3}},
\end{aligned}
\]
where $C$ is a positive constant independent of $e$, $\mu$ and $\rho$.
Since $(5p-7) c_{\infty}(\mu)<0$ and $c_{\infty}(\mu)$ is independent of $\rho$, 
there exists $\rho_0= \rho_0(e, \mu)>0$ such that 
if $\|\rho\|_{\frac{6}{5}}+\|x \cdot \nabla \rho\|_{\frac{6}{5}} \le \rho_0$,
we have
\[
\frac{(3 p-5)}{2} \omega \mu
\ge -\frac{5p-7}{4} c_{\infty}(\mu)>0,
\]
from which we conclude. 
\end{proof}

\begin{proof}[Proof of Theorem \ref{thm:1.1}]
It is a direct consequence of Lemmas \ref{lem:3.1}, \ref{lem:3.4}, \ref{lem:3-ex},
\ref{lem:3.6} and \ref{lem:3.8}. 
\end{proof}

\section{Global well-posedness of the Cauchy problem} 

In this section, we consider the solvability of the Cauchy problem:
\begin{equation} \label{eq:4.1}
\begin{cases}
i \psi_t + \Delta \psi - e\phi \psi + |\psi|^{p-1} \psi =0 
\quad \hbox{in} \ \R_+ \times \R^3, \\
-\Delta \phi = \frac{e}{2}\big( |\psi|^2-\rho(x) \big) 
\quad \hbox{in} \ \R_+ \times \R^3, \\
\psi(0,x)=\psi_0,
\end{cases}
\end{equation}
where $e>0$, $1<p<5$ and $\psi_0 \in H^1(\R^3,\C)$.
For the doping profile $\rho$, we only assume that $\rho \in L^{\frac{6}{5}}(\R^3)$.
Then we have the following result on the local well-posedness.

\begin{proposition} \label{prop:4.1}
There exists $T=T(\| \psi_0\|_{H^1(\R^3)})>0$ such that 
\ef{eq:4.1} has a unique solution $\psi \in X$, where
\[
X= \left\{ \psi \in C\left( [0,T],H^1(\R^3,\C) \right) 
\cap L^{\infty}\left( (0,T), H^1(\R^3,\C) \right) \right\}.
\]
Furthermore, $\psi$ satisfies the energy conservation law 
and the charge conservation law:
\[
\mathcal{E}\big( \psi(t) \big) = \mathcal{E}(\psi_0)
\quad \hbox{and} \quad 
\| \psi(t) \|_2 = \| \psi_0 \|_2 \quad \hbox{for all} \ t \in (0,T).
\]
\end{proposition}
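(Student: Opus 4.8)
The plan is to reduce \ef{eq:4.1} to a single nonlinear Schr\"odinger equation and then run the standard Strichartz-based fixed point scheme, following the general theory for energy-subcritical problems in \cite{Ca}.

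First I would eliminate $\phi$. For fixed $\psi$, since $|\psi|^2 - \rho \in L^{\frac{6}{5}}(\R^3)$, the Poisson equation in \ef{eq:4.1} has the unique solution $\phi = eS(\psi) = \frac{e}{2}(-\Delta)^{-1}(|\psi|^2-\rho) \in D^{1,2}(\R^3,\R)$, exactly as in Section 2. Substituting $\phi$ into the first equation produces
\[
i\psi_t + \Delta\psi = e^2 S_1(\psi)\psi + e^2 S_2\,\psi - |\psi|^{p-1}\psi,
\]
in which $e^2 S_1(\psi)\psi$ is a nonlocal cubic Hartree term, $e^2 S_2\,\psi$ is a linear perturbation by the fixed potential $S_2$, and $|\psi|^{p-1}\psi$ is the usual power nonlinearity, energy-subcritical because $1<p<5$.

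Next I would pass to the Duhamel formulation
\[
\psi(t) = e^{it\Delta}\psi_0 - i\int_0^t e^{i(t-s)\Delta} g\big(\psi(s)\big)\,ds, \qquad g(\psi) := e^2 S_1(\psi)\psi + e^2 S_2\,\psi - |\psi|^{p-1}\psi,
\]
and set up a contraction in a ball of $C([0,T],H^1(\R^3,\C)) \cap L^{q}\big((0,T),W^{1,r}(\R^3)\big)$ for an admissible Strichartz pair $(q,r)$, with $T=T(\|\psi_0\|_{H^1})$ small. The three contributions to $g$ would be estimated as follows. For the power term I would use $H^1(\R^3)\hookrightarrow L^{p+1}(\R^3)$ (valid since $p+1<6$) together with the classical nonlinear estimates. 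For the Hartree term, the bounds of Lemma \ref{lem:2.1}, namely $\|S_1(\psi)\|_6 \le C\|\psi\|_{\frac{12}{5}}^2$, and the Hardy--Littlewood--Sobolev inequality show that $\psi \mapsto S_1(\psi)\psi$ is locally Lipschitz from $H^1$ into $L^2$ with cubic growth, and an analogous computation on $\nabla\big(S_1(\psi)\psi\big)$ gives the $W^{1,r}$-level control. For the linear term, H\"older's inequality with $S_2 \in L^6(\R^3)$ and $\psi \in L^3(\R^3)$ gives $S_2\,\psi \in L^2$, so this is a harmless bounded perturbation. Combining these estimates with the Strichartz inequalities and shrinking $T$ yields a unique fixed point $\psi \in X$, which is the desired local solution; uniqueness is built into the contraction.

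Finally, the conservation laws would follow from the structure of the equation: pairing it with $\bar\psi$, integrating over $\R^3$ and taking the imaginary part gives $\frac{d}{dt}\|\psi(t)\|_2^2 = 0$, while pairing with $\bar\psi_t$ and taking the real part gives $\frac{d}{dt}\mathcal{E}\big(\psi(t)\big)=0$. At the regularity of $X$ these formal computations must be justified by regularizing the data and passing to the limit, as in \cite{Ca}. I expect the main obstacle to be precisely this last point together with the simultaneous handling of the nonlocal Hartree nonlinearity in the Strichartz scheme: the solution lies only in $H^1$, so controlling $S_1(\psi)\psi$ at the $H^1$ level and rigorously establishing energy conservation is the delicate part, and here I would lean on the semigroup machinery and approximation arguments of \cite{Ca}.
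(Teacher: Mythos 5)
Your strategy is the same as the paper's: eliminate $\phi$ by solving the Poisson equation, pass to the Duhamel formula, and close a Strichartz-based fixed point argument on a small time interval $T=T(\|\psi_0\|_{H^1})$, with the conservation laws justified afterwards by the standard approximation argument from \cite{Ca}. The treatment of the Hartree term (Lemma \ref{lem:2.1} plus the Hardy--Littlewood--Sobolev inequality of Lemma \ref{lem:4.3}) also matches the paper's estimates.

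There is, however, a genuine gap in the contraction step as you describe it. You propose to contract in a ball of $C([0,T],H^1)\cap L^q((0,T),W^{1,r})$, i.e.\ in norms containing one derivative. For the power term this requires a Lipschitz bound on $\nabla\big(|\psi_1|^{p-1}\psi_1-|\psi_2|^{p-1}\psi_2\big)$, whose problematic piece is $\big(|\psi_1|^{p-1}-|\psi_2|^{p-1}\big)\nabla\psi_2$. Since the Proposition covers all of $1<p<5$, the case $1<p<2$ is included, and there $z\mapsto|z|^{p-1}$ is only H\"older continuous of order $p-1<1$; this term cannot be bounded linearly by any norm of $\psi_1-\psi_2$, so the solution map is not Lipschitz, let alone a contraction, in the $W^{1,r}$-level metric. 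The paper avoids exactly this by Kato's two-norm scheme: the set $X_T$ is defined by bounds in the strong norms $L_t^\infty H_x^1$ and $L_t^{q}W_x^{1,p+1}$, but it is equipped with the weaker, derivative-free distance $d(\psi_1,\psi_2)=\|\psi_1-\psi_2\|_{L_t^{q}L_x^{p+1}}+\|\psi_1-\psi_2\|_{L_t^\infty L_x^2}$. The self-mapping estimates \ef{eq:4.3}--\ef{eq:4.6} use the derivative bounds, while the contraction estimates \ef{eq:4.7}--\ef{eq:4.8} involve only undifferentiated differences, for which Lipschitz bounds hold for every $1<p<5$; one must then also note that $(X_T,d)$ is complete, which is the price of the trick and is part of why $X_T$ is introduced as it is. A secondary inaccuracy: in Section 4 only $\rho\in L^{\frac{6}{5}}(\R^3)$ is assumed, so $S_2\in L^6\cap D^{1,2}$ but need not be bounded; consequently $\nabla(S_2\psi)=\psi\nabla S_2+S_2\nabla\psi$ lies only in $L^{3/2}_x$, and the ``linear'' term is not a harmless perturbation measured in $L^2_x$ at the $H^1$ level, but must be estimated in the dual Strichartz norm $L_t^{4/3}L_x^{3/2}$ of the admissible pair $(4,3)$, as in \ef{eq:4.5}--\ef{eq:4.6}.
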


Although it seems that Proposition \ref{prop:4.1} can be obtained 
in the framework of \cite[Proposition 3.2.9, Theorem 4.3.1 and Corollary 4.3.3]{Ca},
we give the proof for the sake of completeness and reader's convenience.
For this purpose, we first recall the following inequalities in $\R^3$.

\begin{lemma}[Strichartz's estimates]  \label{lem:4.2}
Let $e^{it \Delta}$ be the linear propagator generated by the free
Schr\"odinger equation $i \psi_t + \Delta \psi =0$.
Assume that pairs $(q_0,r_0)$, $(q_1,r_1)$, $(q_2,r_2)$ are admissible, namely,
they fulfill the relation:
\[
\frac{2}{q_i} = 3 \left( \frac{1}{2} - \frac{1}{r_i} \right), \quad 
2 \le r_i \le 6 \quad (i=0,1,2).
\]
\begin{enumerate}
\item[\rm(i)] There exists $C>0$ such that 
\[
\| e^{it \Delta} f \|_{L^{q_0}( \R, L^{r_0}(\R^3) )} 
\le C \| f \|_{L^2(\R^3)} \quad \hbox{for all} \ f \in L^2(\R^3).
\]
\item[\rm(ii)] Let $I \subset \R$ be an interval, $J=\bar{I}$ and $t_0 \in J$.
Then there exists $C>0$ independent of $I$ such that
\[
\left\| \int_{t_0}^t e^{i(t-s) \Delta} f(s) \,ds \right\|_{L^{q_1} ( I, L^{r_1}(\R^3) )}
\le C \| f \|_{L^{q_2'}( I, L^{r_2'}(\R^3))}
\quad \hbox{for all} \ f \in {L^{q_2'}( I, L^{r_2'}(\R^3))},
\]
where $q_2'$ and $r_2'$ are H\"older conjugate exponents 
of $q_2$ and $r_2$ respectively.
\end{enumerate}

\end{lemma}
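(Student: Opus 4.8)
The plan is to deduce everything from the fundamental dispersive estimate for the free propagator together with an abstract duality argument. First I would record the two elementary bounds from which all the Strichartz estimates flow: the $L^2$-isometry $\| e^{it\Delta} f \|_2 = \| f \|_2$, immediate from Plancherel since the Fourier symbol $e^{-it|\xi|^2}$ has modulus one, and the dispersive estimate $\| e^{it\Delta} f \|_\infty \le C |t|^{-3/2} \| f \|_1$, which follows from the explicit convolution kernel $(4\pi i t)^{-3/2} e^{i|x|^2/(4t)}$ in dimension three. Interpolating between these by Riesz--Thorin yields the family of bounds
\[
\| e^{it\Delta} f \|_{r} \le C\, |t|^{-3(\frac12 - \frac1r)}\, \| f \|_{r'}, \qquad 2 \le r \le \infty,
\]
where $r'$ denotes the H\"older conjugate of $r$.

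For part (i) I would use the standard $TT^*$ method. Setting $Tf := e^{it\Delta} f$, the bound $\| Tf \|_{L^{q_0}(\R, L^{r_0})} \le C \| f \|_2$ is equivalent, by duality, to the boundedness of $TT^*$ from $L^{q_0'}(\R, L^{r_0'})$ to $L^{q_0}(\R, L^{r_0})$, where
\[
TT^* g(t) = \int_{\R} e^{i(t-s)\Delta} g(s)\, ds.
\]
Applying the interpolated dispersive estimate inside the $s$-integral and then taking the $L^{r_0}_x$-norm gives the pointwise-in-time bound $\| TT^* g(t) \|_{r_0} \le C \int_{\R} |t-s|^{-3(\frac12 - \frac1{r_0})} \| g(s) \|_{r_0'}\, ds$. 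The admissibility relation $\frac{2}{q_0} = 3(\frac12 - \frac1{r_0})$ makes the kernel exponent exactly the one demanded by the Hardy--Littlewood--Sobolev inequality in the $t$-variable, so the right-hand side is controlled by $\| g \|_{L^{q_0'}(\R, L^{r_0'})}$. This proves $\| TT^* \| \le C$, hence $\| T \| \le C^{1/2}$, which is (i).

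For part (ii) I would first establish the \emph{untruncated} inhomogeneous bound, in which $\int_{t_0}^t$ is replaced by $\int_{\R}$. Writing $\int_{\R} e^{i(t-s)\Delta} f(s)\, ds = e^{it\Delta} \big( \int_{\R} e^{-is\Delta} f(s)\, ds \big)$ and combining the homogeneous estimate (i) for the pair $(q_1,r_1)$ with the dual of (i) for the pair $(q_2,r_2)$ — which bounds $\| \int_{\R} e^{-is\Delta} f(s)\, ds \|_2$ by $\| f \|_{L^{q_2'}(\R, L^{r_2'})}$ — yields the claim for the full-line integral. To recover the retarded integral $\int_{t_0}^t$ actually appearing in the statement, I would invoke the Christ--Kiselev lemma, whose hypothesis $q_2' < q_1$ holds away from the endpoint.

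The main obstacle is precisely the endpoint pairs, here $r_i = 6$ (so $q_i = 2$): there the Hardy--Littlewood--Sobolev step degenerates to the forbidden exponent $\lambda = 1$ and the Christ--Kiselev lemma fails, so that the endpoint estimate genuinely requires the bilinear interpolation machinery of Keel--Tao rather than the elementary argument above. Since only non-endpoint pairs are needed for the local well-posedness argument of Proposition \ref{prop:4.1}, one could alternatively restrict to $2 < r_i \le 6$ strictly, or simply cite \cite{Ca} for the full statement including the endpoint.
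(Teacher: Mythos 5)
The paper never proves Lemma \ref{lem:4.2}: it is stated as classical background (the standard Strichartz estimates) and used as a black box in the proof of Proposition \ref{prop:4.1}, with \cite{Ca} as the implicit source. So there is no in-paper argument to compare against; judged on its own, your sketch is the correct standard proof. The chain dispersive estimate $\Rightarrow$ Riesz--Thorin $\Rightarrow$ $TT^*$ $\Rightarrow$ one-dimensional Hardy--Littlewood--Sobolev in the time variable gives (i) for all non-endpoint pairs, i.e. $2<r_0<6$; note that the pair $(\infty,2)$ should be handled separately by the $L^2$-isometry, since HLS also degenerates there (at $\lambda=0$, $p=1$), not only at the endpoint $\lambda=1$. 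The factorization $\int_{\R} e^{i(t-s)\Delta} f(s)\,ds = e^{it\Delta}\bigl(\int_{\R} e^{-is\Delta} f(s)\,ds\bigr)$ combined with the dual homogeneous estimate, followed by Christ--Kiselev to pass to the retarded integral $\int_{t_0}^{t}$, correctly gives (ii) whenever $q_2'<q_1$, which fails only when both pairs are the endpoint $(2,6)$. Your final caveat is exactly the right mathematical judgment: as stated, the lemma includes $r_i=6$, and for that case your elementary argument genuinely breaks down (HLS at the forbidden exponent, Christ--Kiselev at $q_2'=q_1=2$), so a complete proof of the statement as written requires the Keel--Tao bilinear argument or a citation. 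Since the paper only ever applies the lemma with the pairs $\bigl(\tfrac{4(p+1)}{3(p-1)},p+1\bigr)$ (non-endpoint because $p+1<6$), $(\infty,2)$ and $(4,3)$, your suggestion to restrict to non-endpoint pairs, or to cite \cite{Ca} for the full range, is fully consistent with how the lemma is actually used in Section 4.
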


\begin{lemma}[Hardy-Littlewood-Sobolev inequality] \label{lem:4.3}
Let $0<\alpha < 3$ and $1<q<r<\infty$ satisfy
$\frac{1}{q} - \frac{1}{r} = 1- \frac{\alpha}{3}$,
and define $I_{\alpha}f$ by
\[
I_{\alpha} f(x) := \intR |x-y|^{-\alpha} f(y) \,dy.
\]
Then there exists $C>0$ such that 
\[
\| I_{\alpha} f \|_r \le C \| f \|_q 
\quad \hbox{for all} \ f \in L^q(\R^3).
\]
\end{lemma}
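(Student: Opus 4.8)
The plan is to establish this classical estimate by the Hardy--Littlewood maximal function together with a near/far splitting of the Riesz kernel, which keeps the argument self-contained and avoids any appeal to abstract interpolation. By linearity of $I_\alpha$ and the homogeneity of the claimed inequality under $f \mapsto \lambda f$, it suffices to treat $f \ge 0$ normalized so that $\|f\|_q = 1$; the general case then follows by scaling.

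First I would fix $x \in \R^3$ and a free radius $R>0$, and split
\[
I_\alpha f(x) = \int_{|x-y| \le R} |x-y|^{-\alpha} f(y)\,dy + \int_{|x-y|>R} |x-y|^{-\alpha} f(y)\,dy =: A_R(x) + B_R(x).
\]
For the near part $A_R$, I decompose the ball $\{|x-y|\le R\}$ into dyadic annuli $\{2^{-j-1}R < |x-y| \le 2^{-j}R\}$, bound the kernel by $(2^{-j-1}R)^{-\alpha}$ on each annulus, and control the average of $f$ over each ball by the maximal function $Mf(x)$; summing the resulting geometric series, which converges precisely because $3-\alpha>0$, gives $A_R(x) \le C R^{3-\alpha} Mf(x)$. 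For the far part $B_R$, Hölder's inequality yields $B_R(x) \le \|f\|_q \big(\int_{|x-y|>R} |x-y|^{-\alpha q'}\,dy\big)^{1/q'}$, and here the hypothesis $\frac1q-\frac1r = 1-\frac{\alpha}{3}$ rewrites as $\frac{\alpha}{3} = \frac{1}{q'}+\frac1r$, so that $\alpha q' > 3$ because $r<\infty$. The kernel integral therefore converges and equals a constant times $R^{3-\alpha q'}$, giving $B_R(x) \le C\,\|f\|_q\, R^{\frac{3}{q'}-\alpha}$.

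The key step is to optimize $R$: choosing $R = (\|f\|_q/Mf(x))^{q/3}$ balances the two contributions and, after the exponents collapse, produces the pointwise bound
\[
I_\alpha f(x) \le C\, \|f\|_q^{\,1-\frac{q}{r}} \big(Mf(x)\big)^{\frac{q}{r}}.
\]
Raising this to the power $r$, integrating in $x$, and invoking the Hardy--Littlewood maximal inequality $\|Mf\|_q \le C\|f\|_q$---valid exactly because $q>1$---then gives $\|I_\alpha f\|_r \le C\|f\|_q$ once the normalization is undone. The main obstacle is the exponent bookkeeping: one must check both that $\alpha q'>3$ so that the far integral converges, and that the optimized powers of $Mf(x)$ and $\|f\|_q$ reduce to $\frac{q}{r}$ and $1-\frac{q}{r}$ respectively, which hinge entirely on the relation $\frac1q-\frac1r = 1-\frac{\alpha}{3}$. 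The constraints $1<q<r<\infty$ are precisely what make the maximal inequality applicable and what rule out the endpoint weak-type behaviour; alternatively, one could record that $|x|^{-\alpha}$ lies in the weak space $L^{3/\alpha,\infty}(\R^3)$ and invoke the weak Young inequality, but the maximal-function route above is the more elementary.
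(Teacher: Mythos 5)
The paper does not actually prove Lemma \ref{lem:4.3}: it is the classical Hardy--Littlewood--Sobolev inequality, stated as known background and used as a black box in the proof of Proposition \ref{prop:4.1}. So your proposal cannot be compared with a proof in the paper; what you give is a self-contained proof via Hedberg's maximal-function trick, and it is correct. The exponent bookkeeping checks out: from $\frac{1}{q}-\frac{1}{r}=1-\frac{\alpha}{3}$ one gets $\frac{\alpha}{3}=\frac{1}{q'}+\frac{1}{r}$, so $\alpha q'>3$ precisely because $r<\infty$, and the far integral converges with value $C R^{3-\alpha q'}$, i.e. $B_R(x)\le C\|f\|_q R^{-3/r}$; the near part gives $C R^{3-\alpha} Mf(x)$ because the dyadic series converges for $\alpha<3$; and the choice $R=(\|f\|_q/Mf(x))^{q/3}$ collapses both contributions to $C\|f\|_q^{1-\frac{q}{r}}(Mf(x))^{\frac{q}{r}}$, using $q\left(1-\frac{\alpha}{3}\right)=1-\frac{q}{r}$. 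Raising to the power $r$, integrating, and applying $\|Mf\|_q\le C\|f\|_q$ (valid since $q>1$) finishes the argument. Two small points to record in a final write-up: (i) the reduction to $f\ge 0$ rests on $|I_\alpha f|\le I_\alpha |f|$, i.e. positivity of the kernel, not on linearity alone; (ii) the optimization in $R$ is performed for each fixed $x$, and the degenerate cases $Mf(x)=0$ (then $f\equiv 0$) and $Mf(x)=\infty$ (bound trivial) should be dispatched first. Compared with simply citing the literature, as the paper does, your argument buys self-containedness; compared with the weak-Young/Lorentz-space route you mention in passing, it is indeed the more elementary of the two, requiring only the Hardy--Littlewood maximal theorem.
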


For simplicity, we write $L^{q}(I, L^{r}(\R^3))$ as $L_t^qL_x^r(I \times \R^3)$
and use a notation $A \lesssim B$ if there is a positive constant  
independent of $A$ and $B$ such that $A \le CB$.

\begin{proof}[Proof of Proposition \ref{prop:4.1}]
First as we have seen in Subsection 2.1, 
the Poisson equation \ef{eq:2.1} has a unique solution 
\[
\psi= eS(\psi) \in D_x^{1,2}(\R^3) \quad \hbox{for any} \ \psi \in H_x^1(\R^3).
\]

Let us consider the Duhamel formula associated with \ef{eq:4.1}
and the solution map $\mathcal{H}(\psi)$ which is defined by
\begin{equation} \label{eq:4.2}
\mathcal{H}(\psi) := e^{it \Delta} \psi_0 
+i \int_0^t e^{i(t-s) \Delta} |\psi|^{p-1} \psi \,ds
-e^2 i \int_0^t e^{i(t-s) \Delta} S(\psi) \psi \,ds. 
\end{equation}
For $T>0$ and $M= 3 \| \psi_0 \|_{H^1(\R^3)}$, 
we also define a complete metric space
\[
\begin{aligned}
X_T &:= \Big\{ \psi \in L_t^{\infty}H_x^1 \left( (-T,T)\times \R^3 \right)
\cap L_t^{\frac{4(p+1)}{3(p-1)}} W_x^{1,p+1} \left( (-T,T) \times \R^3 \right) \ ; \ \\
&\qquad 
\| \psi \|_{L_t^{\infty} H_x^1} \le M, \ 
\| \psi \|_{L_t^{\frac{4(p+1)}{3(p-1)}} W_x^{1,p+1}} \le M \Big\}
\end{aligned}
\]
equipped with the distance
\[
d(\psi_1, \psi_2) 
= \| \psi_1 -\psi_2 \|_{L_t^{\frac{4(p+1)}{3(p-1)}} L_x^{p+1}}
+ \| \psi_1-\psi_2 \|_{L_t^{\infty}L_x^2}. 
\]
Note that $\left( \frac{4(p+1)}{3(p-1)}, p+1 \right)$ and $(\infty, 2)$ 
are both admissible.
For simplicity, we write $q= \frac{4(p+1)}{3(p-1)}$.
It suffices to show that $\mathcal{H}$ is a contraction mapping on $X_T$
provided that $T$ is sufficiently small.

First we establish that $\mathcal{H}$ maps $X_T$ into itself.
To this aim, we apply Lemma \ref{lem:4.2} to find that 
\begin{align*}
\| \mathcal{H}(\psi) \|_{L_t^{\infty}H_x^1([0,T]\times \R^3)}
& \lesssim \| \psi_0 \|_{H^1(\R^3)}
+ \| S(\psi) \psi \|_{L_t^{\frac{4}{3}} W_x^{1, \frac{3}{2}} ( [0,T] \times \R^3)}
+ \| |\psi|^{p-1} \psi \|_{L_t^{q'}W_x^{1,\frac{p+1}{p}}([0,T] \times \R^3)}, \\
\| \mathcal{H}(\psi) \|_{L_t^{q}W_x^{1,p+1}([0,T]\times \R^3)}
& \lesssim \| \psi_0 \|_{H^1(\R^3)}
+ \| S(\psi) \psi \|_{L_t^{\frac{4}{3}} W_x^{1, \frac{3}{2}} ( [0,T] \times \R^3)}
+ \| |\psi|^{p-1} \psi \|_{L_t^{q'}W_x^{1,\frac{p+1}{p}}([0,T] \times \R^3)}.
\end{align*}
Here we mention that $\left( \frac{4}{3} \right)'= 4$, $\left( \frac{3}{2} \right)' = 3$
and the pair $(4,3)$ is admissible.
By the H\"older inequality and the Sobolev inequality, it follows that
\begin{align} \label{eq:4.3}
\| |\psi|^{p-1} \psi \|_{L_t^{q'} L_x^{\frac{p+1}{p}}}
&= \left( \int_0^T \| \psi \|_{L_x^{p+1}}^{q'} \| \psi \|_{L_x^{p+1}}^{(p-1)q'} \,dt
\right)^{\frac{1}{q'}} 
\le \| \psi \|_{L_t^qL_x^{p+1}} 
\left( \int_0^T \| \psi \|_{L_x^{p+1}}^{\frac{2(p-1)(p+1)}{5-p}} \,dt 
\right)^{\frac{5-p}{2(p+1)}} \notag \\
&\lesssim T^{\frac{5-p}{2(p+1)}} \| \psi \|_{L_t^qL_x^{p+1}} 
\| \psi \|_{L_t^{\infty} H_x^1}^{p-1} \le M^p T^{\frac{5-p}{2(p+1)}},
\end{align}
where we used the fact $q'\left( 1- \frac{q'}{q} \right)^{-1} = \frac{2(p+1)}{5-p}$.
Similarly one has
\begin{align} \label{eq:4.4}
\| \nabla (|\psi|^{p-1} \psi) \|_{L_t^{q'} L_x^{\frac{p+1}{p}}}
&\lesssim \| |\psi|^{p-1} \nabla \psi \|_{L_t^{q'} L_x^{\frac{p+1}{p}}} \notag \\
&\lesssim T^{\frac{5-p}{2(p+1)}} \| \nabla \psi \|_{L_t^qL_x^{p+1}} 
\| \psi \|_{L_t^{\infty} H_x^1}^{p-1} 
\le M^p T^{\frac{5-p}{2(p+1)}}.
\end{align}
Next by Lemma \ref{lem:2.1} and the fact $S(\psi)=S_1(\psi)+S_2$, we have
\begin{align} \label{eq:4.5}
\| S(\psi) \psi \|_{L_t^{\frac{4}{3}} L_x^{\frac{3}{2}}} 
&\le \left( \int_0^T \| S(\psi) \|_{L_x^6}^{\frac{4}{3}} 
\| \psi \|_{L_x^2}^{\frac{4}{3}} \,dt \right)^{\frac{3}{4}} 
\lesssim T^{\frac{3}{4}} \| S(\psi) \|_{L_t^{\infty} L_x^6} 
\| \psi \|_{L_t^{\infty} L_x^2} \notag \\
&\lesssim T^{\frac{3}{4}} \left( \| \psi\|_{L_t^{\infty}H_x^1}^2 
+ \| \rho \|_{\frac{6}{5}} \right) \| \psi \|_{L_t^{\infty} L_x^2} 
\le M(M^2+1) T^{\frac{3}{4}},
\end{align}
\begin{align} \label{eq:4.6}
\| \nabla ( S(\psi) \psi) \|_{L_t^{\frac{4}{3}} L_x^{\frac{3}{2}}}
&\le \| \psi \nabla S(\psi) \|_{L_t^{\frac{4}{3}} L_x^{\frac{3}{2}}}
+ \| S(\psi) \nabla \psi \|_{L_t^{\frac{4}{3}} L_x^{\frac{3}{2}}} \notag \\
&\lesssim T^{\frac{3}{4}} \| \nabla S(\psi) \|_{L_t^{\infty} L_x^2}
\| \psi \|_{L_t^{\infty} L_x^6} 
+T^{\frac{3}{4}} \| S(\psi) \|_{L_t^{\infty} L_x^6} \| \nabla \psi \|_{L_t^{\infty} L_x^2} 
\notag \\
&\lesssim T^{\frac{3}{4}} ( \| \psi \|_{L_t^{\infty}H_x^1}^2 +\| \rho \|_{\frac{6}{5}})
\| \psi \|_{L_t^{\infty} H_x^1}
\le M(M^2+1) T^{\frac{3}{4}}.
\end{align}
Thus from \ef{eq:4.3}-\ef{eq:4.6}, one finds that
\begin{align*}
\| \mathcal{H}(\psi) \|_{L_t^{\infty}H_x^1([0,T]\times \R^3)}
\lesssim \frac{M}{3} + M^p T^{\frac{5-p}{2(p+1)}} 
+M(M^2+1) T^{\frac{3}{4}}, \\
\| \mathcal{H}(\psi) \|_{L_t^{q}W_x^{1,p+1}([0,T]\times \R^3)}
\lesssim \frac{M}{3} + M^p T^{\frac{5-p}{2(p+1)}} 
+M(M^2+1) T^{\frac{3}{4}}.
\end{align*}
Choosing $T$ sufficiently small, it follows that
\[
\| \mathcal{H}(\psi) \|_{L_t^{\infty}H_x^1([0,T]\times \R^3)} \le M, \quad 
\| \mathcal{H}(\psi) \|_{L_t^q W_x^{1,p+1}([0,T]\times \R^3)} \le M
\]
and hence $\mathcal{H}$ maps $X_T$ into itself.

Finally we prove that $\mathcal{H}$ is a contraction mapping on $X_T$.
Applying Lemma \ref{lem:4.2} again, we first observe that
\[
d \big( \mathcal{H}(\psi_1), \mathcal{H}(\psi_2) \big)
\lesssim \| |\psi_1|^{p-1}\psi_1 - | \psi_2|^{p-1} \psi_2 
\|_{L_t^{q'} L_x^{\frac{p+1}{p}}( [0,T]\times \R^3)}
+\| S(\psi_1)\psi_1 - S(\psi_2)\psi_2 
\|_{L_t^{\frac{4}{3}} L_x^{\frac{3}{2}} ([0,T]\times \R^3)}.
\] 
By the H\"older inequality, it follows that
\begin{align} \label{eq:4.7}
\| |\psi_1|^{p-1}\psi_1 - | \psi_2|^{p-1} \psi_2 
\|_{L_t^{q'} L_x^{\frac{p+1}{p}}}
&\lesssim \left( \int_0^T \| \psi_1-\psi_2 \|_{L_x^{p+1}}^{q'}
\left( \| \psi_1 \|_{L_x^{p+1}}^{p-1} + \| \psi_2 \|_{L_x^{p+1}}^{p-1} \right)^{q'}
\,dt \right)^{\frac{1}{q'}} \notag \\
&\lesssim T^{\frac{5-p}{2(p+1)}} \| \psi_1-\psi_2 \|_{L_t^q L_x^{p+1}}
\left( \| \psi \|_{L_t^{\infty} H_x^1}^{p-1} + \| \psi_2 \|_{L_t^{\infty} H^1_x}^{p-1}
\right) \notag \\
&\le M^{p-1} T^{\frac{5-p}{2(p+1)}} d(\psi_1, \psi_2). 
\end{align}
Moreover one has
\begin{align*}
&\| S(\psi_1)\psi_1- S(\psi_2)\psi_2 \|_{L_t^{\frac{4}{3}} L_x^{\frac{3}{2}}}
\le \| S(\psi_1) (\psi_1-\psi_2) \|_{L_t^{\frac{4}{3}} L_x^{\frac{3}{2}}}
+ \| (S(\psi_1)-S(\psi_2)) \psi_2 \|_{L_t^{\frac{4}{3}} L_x^{\frac{3}{2}}} \\
&\lesssim T^{\frac{3}{4}} \| S(\psi_1) \|_{L_t^{\infty} L_x^6}
\| \psi_1- \psi_2 \|_{L_t^{\infty} L_x^2}
+T^{\frac{3}{4}} \| S(\psi_1)-S(\psi_2) \|_{L_t^{\infty} L_x^6}
\| \psi_2 \|_{L_t^{\infty} L_x^2}.
\end{align*}
Here we recall that
\begin{align*}
S(\psi_1)-S(\psi_2)
&= S_1(\psi_1)+S_2 - S_1(\psi_2)-S_2 
= \frac{1}{8\pi |x|} * ( |\psi_1|^2-|\psi_2|^2) \\
&= \frac{1}{8\pi} \intR |x-y|^{-1} ( |\psi_1(y)|^2-|\psi_2(y)|^2) \,dy.
\end{align*}
Applying Lemma \ref{lem:4.3} with $\alpha=1$, $q= \frac{6}{5}$ and $r=6$,
one gets
\[
\| S(\psi_1)-S(\psi_2) \|_{L_x^6}
\lesssim \| |\psi_1|^2-|\psi_2|^2 \|_{L_x^{\frac{6}{5}}} 
\lesssim \| \psi_1- \psi_2 \|_{L_x^2} ( \| \psi_1 \|_{L_x^3} + \| \psi_2 \|_{L_x^3}),
\]
from which we find that
\begin{equation} \label{eq:4.8}
\| S(\psi_1) -S(\psi_2) \|_{L_t^{\frac{4}{3}} L_x^{\frac{3}{2}}}
\lesssim M^2 T^{\frac{3}{4}} \| \psi_1 - \psi_2 \|_{L_t^{\infty} L_x^2}
\le M^2T^{\frac{3}{4}} d(\psi_1- \psi_2).
\end{equation}
From \ef{eq:4.7} and \ef{eq:4.8}, it follows that
\[
d \big( \mathcal{H}(\psi_1), \mathcal{H}(\psi_2) \big)
\le \frac{1}{2} d(\psi_1-\psi_2),
\]
provided that $T$ is sufficiently small.

Since $\mathcal{H}$ is a contraction mapping,
there exists a unique fixed point $\psi$, that is, $\psi$ satisfies \ef{eq:4.2}. 
By the Strichartz estimate, we can also find that 
$\mathcal{H}(\psi) \in C([0,T],H^1(\R^3,\C))$
(see \cite[Theorem 2.3.3]{Ca})
and hence $\psi \in C([0,T],H^1(\R^3,\C))$.
Thus $\psi$ is a unique solution on \ef{eq:4.1} in $X$.
Once we could obtain the local well-posedness in $H^1$,
a standard argument shows that 
the energy conservation law and the charge conservation law hold.
\end{proof}

Finally in this section, we prove the following global well-posedness result
in the $L^2$-subcritical case,
which is a direct consequence of the conservation laws.

\begin{proposition} \label{prop:4.4}
Suppose that $1<p<\frac{7}{3}$. 
Then the unique solution $\psi$ obtained in Proposition \ref{prop:4.1}
exists globally in $t>0$.
\end{proposition}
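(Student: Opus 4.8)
The plan is to promote the local solution of Proposition \ref{prop:4.1} to a global one by establishing an a priori bound on $\|\psi(t)\|_{H^1(\R^3)}$ that is uniform on every finite time interval. Since the local existence time $T=T(\|\psi_0\|_{H^1(\R^3)})$ furnished by Proposition \ref{prop:4.1} depends only on the $H^1$-norm of the data, such a uniform bound lets me re-apply the local theory from successive times $T, 2T, \dots$ with a time step that never shrinks to zero, thereby ruling out finite-time blow-up and extending $\psi$ to all $t>0$. Equivalently, if $T_{\max}<\infty$ were the maximal existence time, the blow-up alternative would force $\|\psi(t)\|_{H^1(\R^3)} \to \infty$ as $t \uparrow T_{\max}$, contradicting the bound.

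To produce the bound I would invoke the two conservation laws from Proposition \ref{prop:4.1}. Charge conservation gives $\|\psi(t)\|_2=\|\psi_0\|_2$ for all $t$, so the mass is frozen. Energy conservation $\mathcal{E}(\psi(t))=\mathcal{E}(\psi_0)$, together with the decomposition $\mathcal{E}(u)=\frac12\|\nabla u\|_2^2-\frac{1}{p+1}\|u\|_{p+1}^{p+1}+e^2A_1(u)+2e^2A_2(u)+e^2A_0$, lets me solve for the gradient term:
\[
\frac12\|\nabla\psi(t)\|_2^2
=\mathcal{E}(\psi_0)+\frac{1}{p+1}\|\psi(t)\|_{p+1}^{p+1}
-e^2A_1(\psi(t))-2e^2A_2(\psi(t))-e^2A_0.
\]
Since $A_1(u)\ge 0$ by \ef{eq:2.3}, the term $-e^2A_1(\psi(t))$ is non-positive and may be discarded, while $-e^2A_0$ is a fixed finite constant. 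It then remains to control the two growing terms $\frac{1}{p+1}\|\psi(t)\|_{p+1}^{p+1}$ and $-2e^2A_2(\psi(t))\le 2e^2|A_2(\psi(t))|$ by a fraction of $\|\nabla\psi(t)\|_2^2$.

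Here the subcritical hypothesis $1<p<\frac73$ is decisive. By the Gagliardo-Nirenberg inequality in $\R^3$ and the frozen mass,
\[
\frac{1}{p+1}\|\psi(t)\|_{p+1}^{p+1}
\le C\|\psi_0\|_2^{\frac{5-p}{2}}\|\nabla\psi(t)\|_2^{\frac{3(p-1)}{2}},
\]
and the exponent satisfies $\frac{3(p-1)}{2}<2$ precisely because $p<\frac73$; meanwhile Lemma \ref{lem:2.1} bounds the nonlocal term by $2e^2|A_2(\psi(t))|\le Ce^2\|\rho\|_{\frac65}\|\psi_0\|_2^{\frac32}\|\nabla\psi(t)\|_2^{\frac12}$, whose gradient exponent $\frac12$ is likewise below $2$. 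Applying the Young inequality to both terms to absorb a small multiple of $\|\nabla\psi(t)\|_2^2$ into the left-hand side yields
\[
\frac14\|\nabla\psi(t)\|_2^2\le \mathcal{E}(\psi_0)-e^2A_0+C\big(\|\psi_0\|_2,e,\|\rho\|_{\frac65}\big),
\]
a constant independent of $t$. Combined with the conserved $L^2$-norm, this gives the sought uniform $H^1$ bound and closes the continuation argument.

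I do not expect a genuine obstacle here: the result is the standard $L^2$-subcritical global existence mechanism, and the only point requiring attention is to verify that both nonlinear terms grow strictly slower than $\|\nabla\psi\|_2^2$. This is exactly what $p<\frac73$ guarantees for the power nonlinearity and what the a priori estimate of Lemma \ref{lem:2.1} guarantees for the Hartree-type term $A_2$; the sign $A_1(u)\ge 0$ of the self-interaction term only helps. The argument would of course break down at the $L^2$-critical exponent $p=\frac73$, where $\frac{3(p-1)}{2}=2$ and the gradient term can no longer be absorbed without a smallness restriction on the mass.
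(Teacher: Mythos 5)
Your proof is correct and follows essentially the same route as the paper: an a priori $H^1$ bound obtained from the two conservation laws, the Gagliardo--Nirenberg inequality (with the exponent $\tfrac{3(p-1)}{2}<2$ precisely because $p<\tfrac{7}{3}$), the estimate of Lemma \ref{lem:2.1} for the nonlocal term, and absorption via the Young inequality. The only cosmetic difference is that you discard $-e^2A_1(\psi)$ by its sign \ef{eq:2.3}, whereas the paper bounds $|A_1(\psi)|\le C\|\psi\|_2^3\|\nabla\psi\|_2$ and absorbs it too; both work equally well.
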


\begin{proof}
It is sufficient to show that there exists $C>0$ independent on $t$ such that
$\| \nabla \psi(t) \|_{L_x^2} \le C$ for all $t$ in the existence interval.

Now by the definition of the energy $\mathcal{E}$,
the Gagliardo-Nirenberg inequality and Lemma \ref{lem:2.1}, one has
\begin{align*}
\| \nabla \psi \|_{L_x^2}^2
&= 2 \mathcal{E}(\psi) + \frac{1}{p+1} \| \psi \|_{L_x^{p+1}}^{p+1} 
-e^2 A_1(\psi) -2e^2 A_2(\psi) -e^2 A_0  \\
&\lesssim 2\mathcal{E}(\psi) 
+ \| \psi \|_{L_x^2}^{\frac{5-p}{2}} 
\| \nabla\psi  \|_{L_x^2}^{\frac{3(p-1)}{2}}
+ \| \psi \|_{L_x^2}^3 \| \nabla \psi \|_{L_x^2} 
+ \| \rho \|_{\frac{6}{5}} \| \psi \|_{L_x^2}^{\frac{3}{2}} 
\| \nabla \psi \|_{L_x^2}^{\frac{1}{2}}
+ \| \rho\|_{\frac{6}{5}}^2. 
\end{align*}
Moreover using the Young inequality and the two conservations laws,
for any $\ep' \in (0,1)$, we deduce that
\[
\| \nabla \psi \|_{L_x^2}^2
\lesssim 2 \mathcal{E}(\psi_0) + \ep' \| \nabla \psi \|_{L_x^2}^2
+ \| \psi_0 \|_2^{\frac{2(5-p)}{7-3p}}
+ \| \psi_0 \|_{2}^6 + \| \rho \|_{\frac{6}{5}} \| \psi_0 \|_{2}
+ \| \rho \|_{\frac{6}{5}}^2,
\]
from which we conclude.
\end{proof}

\section{Stability of standing waves}

In this section, we prove the orbital stability of standing waves associated 
with minimizers for $C(\mu)$, which is a direct consequence of Lemma \ref{lem:3.6}.

\begin{proof}[Proof of Theorem \ref{thm:1.2}]
The proof follows the argument of \cite{CL}.
First we observe, 
since $\phi(t,\cdot)= \frac{e}{2} (-\Delta)^{-1} (|\psi(t,\cdot)|^2-\rho)$, 
that if
\begin{align*}
& \sup_{t>0} \Bigl\{ \inf_{y \in \R^3,} \inf_{u \in \mathcal{M}(\mu)} 
\big\| \psi(t,\cdot) - u (\cdot +y) \big\|_{H^1} \Big\} 
< \varepsilon,
\end{align*}
one also has
\[
\sup_{t>0} \inf_{y \in \R^3,} \inf_{u \in \mathcal{M}(\mu) }
\left\| \phi(t,\cdot) - \frac{e}{2} (-\Delta)^{-1}
 (|u(\cdot +y)|^2- \rho) \right\|_{D^{1,2}}
< C \varepsilon
\]
for some $C>0$ independent of $\varepsilon$. 
Thus it is enough to prove that for every $\varepsilon>0$, 
there exists $\delta(\varepsilon)>0$ such that
for any initial data $\psi_{(0)}$ satisfying 
\[
\inf_{u \in \mathcal{M}(\mu)} \| \psi_{(0)} - u \|_{H^1} < \delta,
\]
the corresponding solution $\psi$ verifies
\begin{align*}
\sup_{t>0} 
\inf_{y \in \R^3,} \inf_{u \in \mathcal{M}(\mu) }
\| \psi(t,\cdot)- u(\cdot +y) \|_{H^1} < \varepsilon.
\end{align*}
For that purpose, we assume by contradiction that there exist $\varepsilon_0>0$,
\[
\big( \psi_{(0)j} \big)_{j \in \N}
\subset H^1(\R^3,\C) 
\]
and $\{ t_j \} \subset \R$ such that 
\begin{equation} \label{eq:5.1}
\inf_{u \in \mathcal{M}(\mu)} \| \psi_{(0)j}- u \|_{H^1}
\to 0 \ \hbox{as} \ j \to \infty,
\end{equation}
but the corresponding solution $(\psi_j)$ satisfies
\begin{align} \label{eq:5.2}
\inf_{y \in \R^3,} \inf_{u \in \mathcal{M}(\mu) }
\| \psi(t_j,\cdot)- u(\cdot +y) \|_{H^1} \ge \varepsilon_0.
\end{align}
For simplicity, we write $u_j=\psi_j(t_j,\cdot)$. 
Then by the charge conservation law and from \ef{eq:5.1}, it follows that
\begin{equation} \label{eq:5.3}
\| u_j \|_{2}^2 = \| \psi_{(0)j} \|_{2}^2 \to \mu.
\end{equation}
By the energy conservation law, we also have
\begin{align*}
\mathcal{E}\big( u_j \big)
&= \mathcal{E}\big( \psi_{(0)j} \big) \\
&= \frac{1}{2} \IT | \nabla \psi_{(0)j} |^2 \,dx 
+e^2 A_1(\psi_{(0)j}) +2e^2 A_2(\psi_{(0)j}) +e^2 A_0
-\frac{1}{p+1} \IT | \psi_{(0)j}|^{p+1} \,dx.
\end{align*}
From \ef{eq:5.1}, one gets
\begin{equation} \label{eq:5.4}
\mathcal{E}\big( u_j \big)
\to \mathcal{C} \big( \mu \big).
\end{equation}
From \ef{eq:5.3}, \ef{eq:5.4} and
by Lemma \ref{lem:3.6}, 
there exist $u_{\mu} \in \mathcal{M}(\mu)$ and
$\{ y_j \} \subset \R^3$ such that
$u_j(\cdot) - u_{\mu}(\cdot+y_j) \to 0$ in $H^1(\R^3)$,
in contradiction with \ef{eq:5.2}. This ends the proof of Theorem \ref{thm:1.2}.
\end{proof}

\section{The case $\rho$ is a characteristic function}

In this section, we consider the case where the doping profile $\rho$
is a characteristic function, 
which appears frequently in physical literatures \cite{Je, MRS, Sel}.
More precisely, let $\{ \Omega_i \}_{i=1}^m \subset \R^3$ be
disjoint bounded open sets with smooth boundary.
For $\alpha_i >0$ $(i=1,\cdots, m)$, 
we assume that the doping profile $\rho$ has the form:
\begin{equation} \label{eq:6.1}
\rho(x)= \sum_{i=1}^m \alpha_i \chi_{\Omega_i}(x), \quad
\chi_{\Omega_i}(x)=
\begin{cases} 
1 & (x \in \Omega_i), \\
0 & (x \notin \Omega_i).
\end{cases}
\end{equation}
In this case, $\rho$ cannot be weakly differentiable so that the assumption
$\| x \cdot \nabla \rho \|_{\frac{6}{5}} \le \rho_0$ does not make sense.
Even so, we are able to obtain the existence of stable standing waves
by imposing some smallness condition related with $\Omega_i$.

To state our main result for this case, 
let us put $\dis L:= \sup_{x \in \partial \Omega} |x| < \infty$.
A key is the following \textit{sharp boundary trace inequality} 
due to \cite[Theorem 6.1]{A},
which we present here according to the form used in this paper.

\begin{proposition} \label{prop:6.1}
Let $\Omega \subset \R^3$ be a bounded domain with smooth boundary
and $\gamma: H^1(\Omega) \to L^2(\partial \Omega)$ be 
the trace operator.
Then it holds that
\[
\int_{\partial \Omega} | \gamma(u) |^2 \,dS
\le \kappa_1(\Omega) \int_{\Omega} |u|^2 \,dx
+ \kappa_2(\Omega) 
\left( \int_{\Omega} |u|^2 \,dx \right)^{\frac{1}{2}}
\left( \int_{\Omega} | \nabla u|^2 \,dx \right)^{\frac{1}{2}}
\quad \hbox{for any} \ u \in H^1(\Omega),
\]
where $\kappa_1(\Omega)= \frac{|\partial \Omega|}{|\Omega|}$,
$\kappa_2(\Omega) = \big\| | \nabla w| \big\|_{L^{\infty}(\partial\Omega)}$
and $w$ is a unique solution of the torsion problem:
\[
\Delta w = \kappa_1(\Omega) \ \hbox{in} \ \Omega, 
\quad \frac{\partial w}{\partial n} = 1 \ \hbox{on} \ \partial \Omega.
\]
\end{proposition}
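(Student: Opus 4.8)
The plan is to reduce the boundary integral to an interior integral by using the torsion function $w$ as a multiplier in the divergence theorem. First I would record that the Neumann problem defining $w$ is solvable: its compatibility condition $\int_\Omega \Delta w\,dx = \int_{\partial\Omega}\frac{\partial w}{\partial n}\,dS$ reads $\kappa_1(\Omega)|\Omega|=|\partial\Omega|$, which is exactly the definition of $\kappa_1(\Omega)$, and elliptic regularity together with the smoothness of $\partial\Omega$ gives $w\in C^\infty(\overline{\Omega})$. Since $\gamma$ is continuous on $H^1(\Omega)$ and $C^\infty(\overline{\Omega})$ is dense, it suffices to prove the inequality for smooth $u$ and then pass to the limit.

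The key identity comes from testing the field $|u|^2\nabla w$ in the divergence theorem and using $\frac{\partial w}{\partial n}=1$ on $\partial\Omega$:
\begin{align*}
\int_{\partial\Omega}|\gamma(u)|^2\,dS
&=\int_{\partial\Omega}|u|^2\,\frac{\partial w}{\partial n}\,dS
=\int_\Omega \Div\big(|u|^2\nabla w\big)\,dx\\
&=\int_\Omega |u|^2\,\Delta w\,dx+\int_\Omega \nabla(|u|^2)\cdot\nabla w\,dx.
\end{align*}
Since $\Delta w\equiv\kappa_1(\Omega)$ is constant, the first term equals precisely $\kappa_1(\Omega)\int_\Omega|u|^2\,dx$, which delivers the first term on the right-hand side with the sharp constant. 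For the remaining term I would write $\nabla(|u|^2)=2\RE(\bar u\,\nabla u)$, bound $|\nabla w|$ pointwise in $\Omega$ by $\kappa_2(\Omega)$, and apply the Cauchy--Schwarz inequality to obtain a bound of the form $\kappa_2(\Omega)\big(\int_\Omega|u|^2\big)^{1/2}\big(\int_\Omega|\nabla u|^2\big)^{1/2}$.

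The crux --- and what makes the estimate sharp --- is the pointwise bound $|\nabla w(x)|\le\kappa_2(\Omega)=\||\nabla w|\|_{L^\infty(\partial\Omega)}$ for all $x\in\Omega$, i.e.\ that the interior gradient is controlled by its boundary values. I expect this to be the main obstacle. The mechanism I would use is that $|\nabla w|^2$ is subharmonic: because $\Delta w$ is constant, Bochner's formula gives $\Delta|\nabla w|^2 = 2|D^2w|^2 + 2\nabla w\cdot\nabla(\Delta w) = 2|D^2w|^2\ge 0$. The weak maximum principle then forces $|\nabla w|^2$ to attain its maximum over $\overline{\Omega}$ on $\partial\Omega$, which is exactly the desired pointwise bound. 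This step requires $w\in C^2(\overline{\Omega})$, again supplied by elliptic regularity and the smoothness of $\partial\Omega$. Assembling the two terms then yields the stated trace inequality.
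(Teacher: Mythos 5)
Your strategy --- the divergence theorem applied to the field $|u|^2\nabla w$, the identity $\Delta w\equiv\kappa_1(\Omega)$, Cauchy--Schwarz for the cross term, and the Bochner/maximum-principle argument showing $\sup_\Omega|\nabla w|=\sup_{\partial\Omega}|\nabla w|$ --- is the right mechanism, and it is essentially the vector-field argument behind Auchmuty's theorem. Note that the paper itself offers no proof of Proposition \ref{prop:6.1}: it is quoted from \cite[Theorem 6.1]{A} (``according to the form used in this paper''), so yours is the only actual argument on the table. The solvability and regularity discussion for the torsion problem and the density reduction to smooth $u$ are fine.

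There is, however, a concrete gap at the final step, and it is not cosmetic. From $\nabla(|u|^2)=2\RE(\bar u\,\nabla u)$, Cauchy--Schwarz gives
\[
\left|\int_\Omega\nabla(|u|^2)\cdot\nabla w\,dx\right|
\le 2\,\sup_\Omega|\nabla w|\left(\int_\Omega|u|^2\,dx\right)^{\frac12}\left(\int_\Omega|\nabla u|^2\,dx\right)^{\frac12},
\]
so the coefficient your argument produces is $2\kappa_2(\Omega)$, not $\kappa_2(\Omega)$. This factor $2$ cannot be argued away, because the inequality with coefficient $\kappa_2(\Omega)$ is false as stated. Take $\Omega=B_1\subset\R^3$ and $u(x)=|x|$. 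Then $w(x)=|x|^2/2$ solves the torsion problem, so $\kappa_1(B_1)=3$ and $\kappa_2(B_1)=1$, while
\[
\int_{\partial B_1}|\gamma(u)|^2\,dS=4\pi
\]
and
\[
\kappa_1\int_{B_1}|u|^2\,dx
+\kappa_2\left(\int_{B_1}|u|^2\,dx\right)^{\frac12}\left(\int_{B_1}|\nabla u|^2\,dx\right)^{\frac12}
=3\cdot\frac{4\pi}{5}+\sqrt{\frac{4\pi}{5}}\sqrt{\frac{4\pi}{3}}
=\frac{12\pi}{5}+\frac{4\pi}{\sqrt{15}}<4\pi .
\]
So what your steps prove (correctly) is the trace inequality with $2\kappa_2(\Omega)$ in place of $\kappa_2(\Omega)$; the statement as printed, which you reproduced as your target, has evidently lost this factor in transcription from \cite{A}. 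The discrepancy is immaterial for how the proposition is used in this paper --- Lemma \ref{lem:6.6} and the definition of $D(\Omega)$ only require the inequality up to a universal constant --- but your write-up should not claim the constant $\kappa_2$: either state and prove the inequality with $2\kappa_2(\Omega)$, or absorb the factor $2$ into the definition of $\kappa_2(\Omega)$.
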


In relation to the size of $\rho$, we define
\[
D(\Omega) := L | \Omega |^{\frac{1}{6}} | \partial \Omega |^{\frac{1}{2}}
\left( \kappa_1(\Omega) | \Omega |^{\frac{1}{3}} + \kappa_2 (\Omega) 
\right)^{\frac{1}{2}}.
\]

\begin{remark} \label{rem:6.2}
It is known that $\kappa_2(\Omega) \ge 1$; see \cite{A}.
Then by the isoperimetric inequality in $\R^3$:
\[
| \partial \Omega | \ge 3 | \Omega |^{\frac{2}{3}} | B_1 |^{\frac{1}{3}},
\]
and the fact $| \Omega | \le |B_L(0)| = L^3 |B_1|$, we find that
\begin{equation} \label{eq:6.2}
D(\Omega) \ge
\left( \frac{|\Omega|}{|B_1|} \right)^{\frac{1}{3}} |\Omega|^{\frac{1}{6}}
\cdot \sqrt{3} |\Omega|^{\frac{1}{3}} |B_1|^{\frac{1}{6}}
\left( 3 |B_1|^{\frac{1}{3}} +1 \right)^{\frac{1}{2}} 
=C |\Omega|^{\frac{5}{6}} = C \| \chi_{\Omega} \|_{L^{\frac{6}{5}}(\R^3)},
\end{equation}
where $C$ is a positive constant independent of $\Omega$. 
\end{remark}

Under these preparations, we have the following result.

\begin{theorem} \label{thm:6.3} 
Suppose that $2<p<\frac{7}{3}$, assume \ef{ASS} and 
let $\mu > 2\cdot 2^{\frac{1}{2p-4}} \mu^*$ be given. 
Under the assumption \ef{eq:6.1}, 
there exists $\rho_0= \rho_0(e, \mu)>0$ such that 
if $\dis \sum_{i=1}^m  \alpha_i D(\Omega_i) \le \rho_0$,
the minimization problem \ef{eq:1.5} admits a minimizer $u_{\mu}$.

Moreover the associated Lagrange multiplier $\omega=\omega(\mu)$ is positive.

\end{theorem}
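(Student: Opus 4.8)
The plan is to reproduce the entire scheme of Section 3 verbatim, after observing that the weak differentiability of $\rho$ was exploited in exactly one place: the term $x \cdot \nabla \rho$, which enters through $A_3(u)$ in the Pohozaev identity and Lemma \ref{lem:2.3} (hence in the positivity of $\omega$, Lemma \ref{lem:3.8}), and through the $\rho$-differentiated term in the scaling computation of $f'(\lambda)$ in Lemma \ref{lem:3.4}. For $\rho$ of the form \ef{eq:6.1} I would reinterpret every such occurrence as a boundary integral and estimate it with the sharp trace inequality of Proposition \ref{prop:6.1}, so that the scalar $\sum_{i=1}^m \alpha_i D(\Omega_i)$ takes over the role played by $\| x \cdot \nabla \rho \|_{\frac{6}{5}}$ (and, via Remark \ref{rem:6.2}, simultaneously controls $\| \rho \|_{\frac{6}{5}}$).

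The analytic core is the following estimate. Using that the distributional gradient of $\chi_{\Omega_i}$ is the vector measure $-n\,dS$ carried by $\partial\Omega_i$, the divergence theorem gives, for smooth enough $u$,
\[
A_3(u) = \tfrac12 \intR S_1(u)\, x \cdot \nabla \rho \,dx
= -\tfrac12 \sum_{i=1}^m \alpha_i \int_{\partial \Omega_i} S_1(u)\,(x \cdot n)\,dS .
\]
Since $|x \cdot n| \le |x| \le L$ on each $\partial \Omega_i$, Cauchy--Schwarz on the surface together with Proposition \ref{prop:6.1} applied to $S_1(u)$ yields
\[
\int_{\partial \Omega_i} |S_1(u)|\,dS
\le |\partial \Omega_i|^{\frac12}
\Big( \kappa_1(\Omega_i) \| S_1(u)\|_{L^2(\Omega_i)}^2
+ \kappa_2(\Omega_i) \| S_1(u)\|_{L^2(\Omega_i)} \| \nabla S_1(u)\|_{L^2(\Omega_i)} \Big)^{\frac12}.
\]
Bounding $\| S_1(u)\|_{L^2(\Omega_i)} \le |\Omega_i|^{\frac13}\| S_1(u)\|_6$ by H\"older and invoking $\|S_1(u)\|_6, \|\nabla S_1(u)\|_2 \le C\| u\|_2^{\frac32}\| \nabla u\|_2^{\frac12}$ from Lemma \ref{lem:2.1}, the bracket factors exactly into $|\Omega_i|^{\frac16}(\kappa_1(\Omega_i)|\Omega_i|^{\frac13}+\kappa_2(\Omega_i))^{\frac12}\|u\|_2^{\frac32}\|\nabla u\|_2^{\frac12}$, whence
\[
|A_3(u)| \le C \Big( \sum_{i=1}^m \alpha_i D(\Omega_i) \Big)
\| u\|_2^{\frac32}\| \nabla u\|_2^{\frac12}.
\]
This is precisely the analogue of the fourth estimate in Lemma \ref{lem:2.1}, with $\|x\cdot\nabla\rho\|_{\frac65}$ replaced by $\sum_i\alpha_i D(\Omega_i)$.

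With this estimate in hand the rest is bookkeeping. Remark \ref{rem:6.2} gives $\|\rho\|_{\frac65}\le \sum_i\|\alpha_i\chi_{\Omega_i}\|_{\frac65}\le C\sum_i\alpha_i D(\Omega_i)$, so a smallness hypothesis on $\sum_i\alpha_i D(\Omega_i)$ forces both $\|\rho\|_{\frac65}$ and the (boundary-integral) $A_3$-term to be small. I would then re-establish Lemma \ref{lem:2.3} with $A_3$ read as the boundary integral above: in the dilation $u_\lambda(x)=u(x/\lambda)$ the quantity $\int S_1(u)\rho(\lambda x)\,dx$ is an integral over the rescaled domains $\lambda^{-1}\Omega_i$, whose $\lambda$-derivative is the surface integral $\sum_i\alpha_i\int_{\partial(\lambda^{-1}\Omega_i)}S_1(u)\,(x\cdot n)\,dS$ and reproduces the $A_3$ term at $\lambda=1$; the computation $R=0$ persists since it only uses $-\Delta S_2=-\rho/2$ and can be justified by mollifying $\rho$. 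The same reading replaces the $x\cdot\nabla\rho$ term in $f'(\lambda)$ in Lemma \ref{lem:3.4}, now bounded uniformly in $\lambda>1$ by the estimate above. Consequently Lemmas \ref{lem:3.1}--\ref{lem:3.8} hold \emph{verbatim} with $\rho_0$ now bounding $\sum_i\alpha_i D(\Omega_i)$, and Theorem \ref{thm:6.3} follows exactly as Theorem \ref{thm:1.1} did.

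I expect the main obstacle to be the rigorous justification of differentiating the dilating-domain integral and, relatedly, verifying that the Pohozaev identity of Lemma \ref{lem:2.3} survives the loss of weak differentiability of $\rho$, i.e. that every formal integration by parts involving $\nabla\rho$ is legitimate as a boundary term against the $C^{1,\alpha}$ functions $S_1(u)$ and $S_2$. The cleanest route is to run the whole argument for smooth $\rho_\epsilon \to \rho$ in $L^{\frac65}$ with the associated geometric quantities kept under control, and then pass to the limit, using the trace estimate to keep all boundary contributions bounded. Once the Pohozaev identity and the strict sub-additivity estimate are secured in this way, the concentration-compactness step of Lemma \ref{lem:3.6} and the positivity argument of Lemma \ref{lem:3.8} require no further change.
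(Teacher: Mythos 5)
Your proposal is correct and follows essentially the same route as the paper: interpreting $A_3$ as the boundary integral $-\frac{1}{2}\sum_i \alpha_i \int_{\partial\Omega_i} S_1(u)\, x\cdot n_i \,dS_i$, estimating it via the sharp trace inequality of Proposition \ref{prop:6.1} to get $|A_3(u)| \le C \big( \sum_i \alpha_i D(\Omega_i) \big) \| u \|_2^{3/2} \| \nabla u \|_2^{1/2}$ (the paper's Lemma \ref{lem:6.6}), differentiating the dilating-domain integral to recover the $A_3$ term in $f'(\lambda)$ and in the Pohozaev identity (the paper's Lemmas \ref{lem:6.4}--\ref{lem:6.5}), and then rerunning Lemmas \ref{lem:3.4} and \ref{lem:3.8} with $\sum_i \alpha_i D(\Omega_i)$ in place of $\| x\cdot\nabla\rho \|_{6/5}$. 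The only inessential difference is that you justify the identity $R=0$ by mollification, whereas the paper verifies it directly for characteristic functions via Fubini and the divergence theorem.
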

Similarly to Theorem \ref{thm:1.2}, 
the orbital stability of $e^{i\omega t} u_{\mu}(x)$ also holds true.

\smallskip
We mention that the first time $x \cdot \nabla \rho(x)$ appeared
was the definition of $A_3(u)$ in Subsection 2.3.
Under the assumption \ef{eq:6.1}, we replace $A_3(u)$ by
\begin{equation} \label{eq:6.3}
A_3(u) := - \frac{1}{2} \sum_{i=1}^m \alpha_i 
\int_{\partial \Omega_i} S_1(u) x \cdot n_i \,dS_i,
\end{equation}
where $n_i$ is the unit outward normal on $\partial \Omega_i$.
Indeed we have the following.

\begin{lemma} \label{lem:6.4}
Let $\Omega \subset \R^3$ be a bounded domain with smooth boundary
and put
\[
\Omega(\lambda) 
:= \int_{\mathbb{R}^3} S_1(u)(x) \chi_{\Omega}\left( \frac{x}{\lambda}\right) \,d x
\quad \text {for} \ \lambda>0.
\]
Then it holds that
\[
\Omega^{\prime}(\lambda)
= \frac{1}{\lambda} \int_{\partial (\lambda \Omega)} S_1(u)(x) x \cdot n \,dS. 
\]
\end{lemma}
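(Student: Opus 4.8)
The plan is to turn the scaled characteristic function into a dilating domain and then differentiate a fixed integrand over a moving region. First I would observe that $\chi_\Omega(x/\lambda)=\chi_{\lambda\Omega}(x)$, so that
\[
\Omega(\lambda)=\int_{\lambda\Omega} S_1(u)(x)\,dx
\]
is nothing but the integral of the \emph{fixed} function $S_1(u)$ over the dilated domain $\lambda\Omega$. To expose the $\lambda$-dependence on a region that does not move, I would perform the change of variables $x=\lambda y$, giving
\[
\Omega(\lambda)=\lambda^3\int_{\Omega} S_1(u)(\lambda y)\,dy .
\]

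Next I would differentiate in $\lambda$ under the integral sign, which produces
\[
\Omega'(\lambda)=\int_{\Omega}\Big(3\lambda^2 S_1(u)(\lambda y)+\lambda^3\, y\cdot\nabla S_1(u)(\lambda y)\Big)\,dy
=\lambda^2\int_{\Omega}\Big(3 S_1(u)(\lambda y)+\lambda\, y\cdot\nabla S_1(u)(\lambda y)\Big)\,dy .
\]
The point of this factorization is that, since $\mathrm{div}_y\, y=3$ in $\R^3$, the integrand is an exact divergence:
\[
\mathrm{div}_y\big(S_1(u)(\lambda y)\,y\big)
=3 S_1(u)(\lambda y)+\lambda\, y\cdot\nabla S_1(u)(\lambda y).
\]
Applying the divergence theorem on $\Omega$ then yields
\[
\Omega'(\lambda)=\lambda^2\int_{\partial\Omega} S_1(u)(\lambda y)\,(y\cdot n)\,dS_y .
\]

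Finally I would transport this boundary integral back to $\partial(\lambda\Omega)$ via $x=\lambda y$. Under this dilation the outward unit normal is unchanged, while the surface measure scales as $dS_x=\lambda^2\,dS_y$ and $y=x/\lambda$; substituting gives
\[
\Omega'(\lambda)=\lambda^2\int_{\partial(\lambda\Omega)} S_1(u)(x)\,\Big(\tfrac{x}{\lambda}\cdot n\Big)\,\frac{dS_x}{\lambda^2}
=\frac{1}{\lambda}\int_{\partial(\lambda\Omega)} S_1(u)(x)\,x\cdot n\,dS,
\]
which is the claimed identity.

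The main point requiring care is the legitimacy of differentiating under the integral sign and of the divergence theorem, both of which need $S_1(u)$ to be $C^1$ (or at least $W^{1,1}$ on a neighborhood of $\lambda\Omega$, with difference quotients dominated uniformly for $\lambda$ in a compact interval). This regularity is already available in the present setting: since $u\in H^1(\R^3,\C)\hookrightarrow L^6$, one has $|u|^2\in L^3$, and hence by the elliptic regularity theory used in Section 2, $S_1(u)=(-\Delta)^{-1}(|u|^2/2)\in W^{2,3}_{loc}(\R^3)\hookrightarrow C^{1,\alpha}_{loc}(\R^3)$; together with the global bounds on $S_1(u)$ and $\nabla S_1(u)$ from Lemma \ref{lem:2.1}, this justifies both steps. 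Alternatively, the whole computation can be read off directly from the Reynolds transport theorem applied to the domain $\lambda\Omega$, whose boundary points $x=\lambda y$ move with velocity $x/\lambda$ and hence with normal velocity $(x\cdot n)/\lambda$.
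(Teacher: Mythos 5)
Your computation itself is correct, and it reaches the identity by a genuinely different route from the paper. The paper never differentiates $S_1(u)$ at all: it writes $\Omega'(\lambda)$ as the limit of $\frac{1}{h}\int_{(\lambda+h)\Omega\setminus\lambda\Omega}S_1(u)\,dx$, converts the integral over the thin shell into an iterated integral $\int_\lambda^{\lambda+h}(\cdots)\,dt$ of surface integrals over the dilated boundaries $\partial(t\Omega)$ (the radial foliation, with normal speed $(x\cdot n)/t$), and concludes by continuity in $t$. Your route --- pull back to the fixed domain $\Omega$, differentiate under the integral sign, recognize the integrand as $\mathrm{div}_y\bigl(S_1(u)(\lambda y)\,y\bigr)$, apply the divergence theorem, and push the boundary integral back to $\partial(\lambda\Omega)$ --- is a clean alternative, and all the scaling factors come out right. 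The trade-off is that the paper's argument needs only the \emph{continuity} of $S_1(u)$, whereas yours needs $S_1(u)$ to have a (weak) gradient that can be integrated over $\Omega$ and a trace on $\partial\Omega$, i.e.\ genuinely more regularity than continuity.

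This is where your justification has a flaw. For general $u\in H^1(\R^3,\C)$ one only has $|u|^2\in L^1\cap L^3$, hence $S_1(u)\in W^{2,3}_{loc}(\R^3)$, and the embedding $W^{2,3}_{loc}(\R^3)\hookrightarrow C^{1,\alpha}_{loc}(\R^3)$ that you invoke is \emph{false}: this is the critical case, since $\nabla S_1(u)\in W^{1,3}_{loc}$ and $W^{1,3}(\R^3)\not\hookrightarrow L^\infty(\R^3)$. (Compare with the paper's treatment of $S_2$ in Subsection 2.5, where the hypothesis $\rho\in L^q_{loc}$ with $q>3$ \emph{strictly} is imposed precisely to get $W^{2,q}_{loc}\hookrightarrow C^{1,\alpha}_{loc}$; for $S_1(u)$ the source term $|u|^2$ is only in $L^3$, so this fails.) Likewise, the bounds of Lemma \ref{lem:2.1} are $L^6$/$L^2$ bounds and provide no pointwise domination. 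The gap is repairable without upgrading to $C^1$: the regularity $S_1(u)\in W^{2,3}_{loc}$ already suffices, because (a) the divergence theorem holds for $W^{1,1}$ vector fields on a bounded smooth domain (interpreting the boundary integrand via the trace of $S_1(u)$, which lies in $L^1(\partial\Omega)$), and (b) differentiation under the integral sign can be justified by the absolute continuity of $\lambda\mapsto\int_\Omega g(\lambda y)\,dy$ for $g\in W^{1,1}_{loc}$ along the dilation flow, together with the continuity in $\lambda$ of $\lambda\mapsto\int_\Omega y\cdot\nabla S_1(u)(\lambda y)\,dy$; alternatively, mollify $S_1(u)$, prove the identity for the regularizations, and pass to the limit. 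Either patch, or simply switching to the paper's shell-and-coarea argument (which only uses $S_1(u)\in C^{0,\alpha}_{loc}$, available from $W^{2,3}_{loc}$), closes the argument.
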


\begin{proof}
The proof is based on the domain deformation as in \cite{XLW}.
In fact, one has 
\begin{align*}
\frac{d}{d\lambda} \Omega(\lambda) 
&= \frac{d}{d\lambda} \int_{\lambda \Omega} S_1(u)(x) \,dx \\
&= \lim_{h \to 0} \frac{1}{h}
\left( \int_{(\lambda+h)\Omega} S_1(u)(x) \,dx 
- \int_{\lambda \Omega} S_1(u)(x) \,dx \right) \\
&= \lim_{h \to 0} \frac{1}{h} 
\int_{(\lambda+h)\Omega \setminus (\lambda \Omega)} S_1(u)(x) \,dx \\
&= \lim_{h \to 0} \frac{1}{h} \int_{\lambda}^{\lambda+h} 
\left( \frac{1}{t} \int_{\partial (t \Omega)} S_1(u)(x) 
x \cdot (t n) \,dS \right) \,dt \\
&= \frac{1}{\lambda} \int_{\partial (\lambda \Omega)} S_1(u)(x) x \cdot n \,dS,
\end{align*}
from which we conclude.
\end{proof}

By Lemma \ref{lem:6.4}, the Pohozev identity can be reformulated as follows.

\begin{lemma} \label{lem:6.5}
Under the assumption \ef{eq:6.1}, any nontrivial solution $u$ of \ef{eq:1.1}
satisfies the following identity.
\[
0 = \frac{1}{2} \| \nabla u \|_2^2 + \frac{3\omega}{2} \| u \|_2^2
- \frac{3}{p+1} \| u \|_{p+1}^{p+1} 
+ 5e^2 A_1(u) + 10e^2 A_2(u) 
+ \frac{e^2}{2} \sum_{i=1}^m \alpha_i 
\int_{\partial \Omega_i} S_1(u) x \cdot n_i \,dS_i, 
\]
\[
\begin{aligned}
(5 p-7) E(u) &= 
2(p-2) \|\nabla u\|_2^2 -\frac{(3 p-5) \omega}{2} \| u\|_2^2 \\
&\quad -e^2 \sum_{i=1}^m \alpha_i \left( 
2 \int_{\Omega_i} S_1(u) \,dx 
-\frac{3-p}{2} \int_{\partial \Omega_i} S_1(u) x \cdot n_i \,dS_i \right).
\end{aligned}
\]

\end{lemma}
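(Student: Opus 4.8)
The plan is to reproduce the scaling derivation of the Pohozaev identity \ef{eq:2.9} and the algebra of Lemma \ref{lem:2.3}, replacing the single step where $x \cdot \nabla \rho$ was paired with $S_1(u)$ by a boundary contribution extracted via Lemma \ref{lem:6.4}. First I would prove the Pohozaev-type (first) identity. Taking $u_\lambda(x)=u(x/\lambda)$ exactly as before, the scaling relations \ef{eq:2.5} give the same $\lambda$-dependence for the gradient, mass, $L^{p+1}$ and $A_1$ terms, so that
\[
I(u_\lambda)=\frac{\lambda}{2}\|\nabla u\|_2^2+\frac{\lambda^3\omega}{2}\|u\|_2^2-\frac{\lambda^3}{p+1}\|u\|_{p+1}^{p+1}+e^2\lambda^5 A_1(u)-\frac{\lambda^5 e^2}{2}\intR S_1(u)\,\rho(\lambda x)\,dx+e^2A_0.
\]
Since $u$ solves \ef{eq:1.1}, it is a critical point of $I$, so $\frac{d}{d\lambda}I(u_\lambda)\big|_{\lambda=1}=0$, and the only term needing new treatment is the last nonlocal one.

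Under \ef{eq:6.1} one has $\rho(\lambda x)=\sum_{i=1}^m\alpha_i\chi_{\Omega_i}(\lambda x)$ and
\[
\intR S_1(u)\,\chi_{\Omega_i}(\lambda x)\,dx=\int_{\frac{1}{\lambda}\Omega_i}S_1(u)\,dx=\Omega_i\!\left(\tfrac{1}{\lambda}\right),
\]
in the notation of Lemma \ref{lem:6.4}. Differentiating $\lambda^5\,\Omega_i(1/\lambda)$ by the product and chain rules produces, at $\lambda=1$, two contributions: the value $\int_{\Omega_i}S_1(u)\,dx$, which recombines through $\sum_i\alpha_i\int_{\Omega_i}S_1(u)\,dx=-4A_2(u)$ into the coefficient $10e^2A_2(u)$, and the boundary derivative furnished by Lemma \ref{lem:6.4}, equal to $-\int_{\partial\Omega_i}S_1(u)\,x\cdot n_i\,dS_i$, which yields $\frac{e^2}{2}\sum_i\alpha_i\int_{\partial\Omega_i}S_1(u)\,x\cdot n_i\,dS_i$. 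Differentiating the remaining terms as usual and setting the sum to zero gives the first identity. This is precisely \ef{eq:2.9} read with the redefined $A_3$ of \ef{eq:6.3}, since then $-e^2A_3(u)=\frac{e^2}{2}\sum_i\alpha_i\int_{\partial\Omega_i}S_1(u)\,x\cdot n_i\,dS_i$.

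The second identity then requires no further geometric input. The Nehari identity \ef{eq:2.8} involves only $A_1$ and $A_2$, hence is unchanged under \ef{eq:6.1}; combining it with the Pohozaev identity just obtained reproduces the computation of Lemma \ref{lem:2.3} verbatim, yielding $(5p-7)E(u)=2(p-2)\|\nabla u\|_2^2-\frac{(3p-5)\omega}{2}\|u\|_2^2+8e^2A_2(u)-(3-p)e^2A_3(u)$ with the new $A_3$. Substituting $A_2(u)=-\frac{1}{4}\sum_i\alpha_i\int_{\Omega_i}S_1(u)\,dx$ and \ef{eq:6.3}, then factoring out $-e^2\sum_i\alpha_i$, gives the stated form.

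I expect the genuine difficulty to be rigor rather than the formal computation, because $\rho$ is discontinuous and $x\cdot\nabla\rho$ exists only as the distribution $-\sum_i\alpha_i\,n_i\,dS_i$ supported on $\bigcup_i\partial\Omega_i$. Two facts keep the argument sound: $S_1(u)=\frac{1}{8\pi|x|}*|u|^2$ is continuous for $u\in H^1(\R^3,\C)$, so the boundary integrals are well defined, and Lemma \ref{lem:6.4} differentiates the domain integral directly, never differentiating $\chi_{\Omega_i}$. A fully rigorous derivation would parallel the smooth case discussed after \ef{eq:2.9}: since $\rho\in L^q_{loc}(\R^3)$ for every $q<\infty$ one still gets $S_2\in W^{2,q}_{loc}(\R^3)\hookrightarrow C^{1,\alpha}_{loc}(\R^3)$ and the corresponding regularity of $u$, then one multiplies \ef{eq:1.1} by $x\cdot\nabla\bar u$ and $e\,x\cdot\nabla S(u)$ and integrates over $B_R(0)$ as $R\to\infty$; the sole change is that the divergence-theorem identity $\nabla\chi_{\Omega_i}=-n_i\,dS_i\big|_{\partial\Omega_i}$ converts $-\frac{e^2}{2}\intR S_1(u)\,x\cdot\nabla\rho\,dx$ into the boundary integrals above, in agreement with the scaling computation.
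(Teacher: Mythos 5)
Your proof is correct, and it rests on the same pillars as the paper's: the dilation $u_\lambda(x)=u(x/\lambda)$, Lemma \ref{lem:6.4} for differentiating the domain integral, and the algebra of Lemma \ref{lem:2.3} combined with the (unchanged) Nehari identity \ef{eq:2.8}. Where you genuinely differ is in the decomposition that gets differentiated. You keep only the $u$-dependent nonlocal terms, writing the $\rho$-part of $I(u_\lambda)$ as $-\frac{\lambda^5e^2}{2}\sum_i\alpha_i\,\Omega_i(1/\lambda)$; at $\lambda=1$ the product and chain rules then give exactly $10e^2A_2(u)+\frac{e^2}{2}\sum_i\alpha_i\int_{\partial\Omega_i}S_1(u)\,x\cdot n_i\,dS_i$, the constant $e^2A_0$ differentiates to zero, and $S_2$ never enters. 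The paper instead expresses the scaled identity through the full potentials $S(u)=S_1(u)+S_2$ and $A(u)=A_1(u)+2A_2(u)+A_0$, and the bulk of its written proof goes into showing that the $u$-independent remainder $-5e^2A_0-\frac{e^2}{2}\sum_i\alpha_i\int_{\partial\Omega_i}S_2\,x\cdot n_i\,dS_i$ vanishes, via Fubini symmetrization ($\int_{\Omega_i}x\cdot\nabla S_2\,dx=-\frac{1}{2}\int_{\Omega_i}S_2\,dx$) and the divergence theorem; that computation is what certifies the $S(u)$-form of the Pohozaev identity (the analogue of the first line of \ef{eq:2.9}), but it is not needed for the identity as stated in the lemma, which involves only $S_1$, $A_1$, $A_2$. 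So your bookkeeping buys a shorter proof of the same statement, at the price of not producing the $S(u)$-form as a by-product. Your closing rigor discussion --- continuity of $S_1(u)$, plus the fact that $\rho\in L^\infty$ with compact support still yields $S_2\in W^{2,q}_{loc}\hookrightarrow C^{1,\alpha}_{loc}$, so the $x\cdot\nabla\bar u$ multiplier argument over $B_R(0)$ goes through with $\nabla\chi_{\Omega_i}$ interpreted as the surface measure $-n_i\,dS_i$ --- is sound and fills in what the paper handles by pointing back to Subsection 2.5.
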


\begin{proof}
As we have seen in Subsection 2.5, 
the Pohozaev identity can be obtained by considering 
$\frac{d}{d\lambda} I(u_{\lambda}) |_{\lambda =1}$ with 
$u_{\lambda}(x)= u \left( \frac{x}{\lambda} \right)$.
Applying Lemma \ref{lem:6.4} with $\lambda=\lambda^{-1}$, we then obtain
\begin{align*} 
0&=\frac{1}{2} \| \nabla u \|_2^2 
+\frac{3 \omega}{2}\|u\|_2^2
-\frac{3}{p+1} \| u \|_{p+1}^{p+1} 
+5 e^2 A(u)
+\frac{e^2}{2} \sum_{i=1}^m \alpha_i \int_{\partial \Omega_i}
S(u) x \cdot n_i \,dS_i \\
&\quad -5 e^2 A_0 - \frac{e^2}{2} \sum_{i=1}^m \alpha_i
\int_{\partial \Omega_i} S_2 x \cdot n_i \,dS_i.
\end{align*}
Now recalling that
\begin{align*}
A_0 &= - \frac{1}{4} \intR S_2 \rho(x) \,dx
= - \frac{1}{4} \sum_{i=1}^m \alpha_i \int_{\Omega_i} S_2 \,dx, \\
S_2(x) &= -\frac{1}{8\pi} \intR \frac{\rho(y)}{|x-y|} \,dy
= - \frac{1}{8\pi} \sum_{i=1}^m \alpha_i 
\int_{\Omega_i} \frac{1}{|x-y|} \,dy,
\end{align*}
one finds that
\begin{align*}
\int_{\Omega_i} x \cdot \nabla S_2 \,dx
&= \frac{1}{8\pi} \sum_{i=1}^m \alpha_i \int_{\Omega_i} \int_{\Omega_i}
\frac{x \cdot (x-y)}{|x-y|^3} \,dy \,dx 
= \frac{1}{8\pi} \sum_{i=1}^m \alpha_i \int_{\Omega_i} \int_{\Omega_i}
\left( \frac{1}{|x-y|} + \frac{y \cdot (x-y)}{|x-y|^3} \right) \,dy\,dx  \\
&= - \int_{\Omega_i} S_2 \,dx 
- \int_{\Omega_i} x \cdot \nabla S_2 \,dx,
\end{align*}
and hence
\[
\int_{\Omega_i} x \cdot \nabla S_2 \,dx 
= - \frac{1}{2} \int_{\Omega_i} S_2 \,dx.
\]
Here we used the Fubini theorem.
Thus by the divergence theorem, we get
\begin{align*}
-5e^2 A_0 - \frac{e^2}{2} \sum_{i=1}^m \alpha_i
\int_{\partial \Omega_i} S_2 x \cdot n_i \,dS_i 
&= \frac{5e^2}{4} \sum_{i=1}^m \alpha_i 
\int_{\Omega_i} S_2 \,dx 
- \frac{e^2}{2} \sum_{i=1}^m \alpha_i
\int_{\Omega_i} {\rm div}(x S_2) \,dx \\
&= - \frac{e^2}{4} \sum_{i=1}^m \alpha_i 
\int_{\Omega_i} S_2 \,dx
- \frac{e^2}{2} \sum_{i=1}^m \alpha_i
\int_{\Omega_i} x \cdot \nabla S_2 \,dx =0.
\end{align*}
This completes the proof of the Pohozaev identity.
Then similarly to Lemma \ref{lem:2.3}, 
we can show the second identity.
\end{proof}

Next we establish estimates for $A_2$ and $A_3$.

\begin{lemma} \label{lem:6.6}
For any $u \in H^1(\R^3,\C)$, $A_2$ and $A_3$ satisfy the estimates:
\begin{align*}
|A_2(u)| &\le C \sum_{i=1}^m \alpha_i | \Omega_i |^{\frac{5}{6}}
\| u\|_2^{\frac{3}{2}} \| \nabla u \|_2^{\frac{1}{2}}, \\
|A_3(u)| &\le C \sum_{i=1}^m \alpha_i D(\Omega_i) 
\| u\|_2^{\frac{3}{2}} \| \nabla u \|_2^{\frac{1}{2}}, 
\end{align*}
where $C>0$ is a constant independent of $\Omega_i$.
\end{lemma}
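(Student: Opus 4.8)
The plan is to treat the two estimates separately, since the bound on $A_2$ requires only a direct application of Lemma~\ref{lem:2.1}, whereas the bound on $A_3$ genuinely needs the sharp boundary trace inequality of Proposition~\ref{prop:6.1}. For $A_2$, I would start from $A_2(u) = -\frac{1}{4} \intR S_1(u) \rho \,dx$ and \ef{eq:6.1} to write $|A_2(u)| \le \frac{1}{4} \sum_{i=1}^m \alpha_i \int_{\Omega_i} S_1(u) \,dx$. The H\"older inequality on each $\Omega_i$ with exponents $6$ and $\frac{6}{5}$ gives $\int_{\Omega_i} S_1(u) \,dx \le |\Omega_i|^{\frac{5}{6}} \| S_1(u) \|_6$, and the estimate $\| S_1(u) \|_6 \le C \| u \|_2^{\frac{3}{2}} \| \nabla u \|_2^{\frac{1}{2}}$ from Lemma~\ref{lem:2.1} yields the first claimed inequality at once.

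The estimate for $A_3$ is the substantive part. Starting from the definition \ef{eq:6.3}, I would first use $|x| \le L_i := \sup_{x \in \partial \Omega_i} |x|$ on $\partial \Omega_i$ to obtain $|A_3(u)| \le \frac{1}{2} \sum_i \alpha_i L_i \int_{\partial \Omega_i} |S_1(u)| \,dS_i$, and then H\"older on the boundary gives $\int_{\partial \Omega_i} |S_1(u)| \,dS_i \le |\partial \Omega_i|^{\frac{1}{2}} \big( \int_{\partial \Omega_i} |S_1(u)|^2 \,dS_i \big)^{\frac{1}{2}}$. The key step is then to apply Proposition~\ref{prop:6.1} to the function $S_1(u)$ rather than to $u$ itself: this bounds $\int_{\partial \Omega_i} |S_1(u)|^2 \,dS_i$ by $\kappa_1(\Omega_i) \| S_1(u) \|_{L^2(\Omega_i)}^2 + \kappa_2(\Omega_i) \| S_1(u) \|_{L^2(\Omega_i)} \| \nabla S_1(u) \|_{L^2(\Omega_i)}$. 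To produce exactly the powers of $|\Omega_i|$ appearing in $D(\Omega_i)$, I would estimate the interior $L^2$-norm by H\"older as $\| S_1(u) \|_{L^2(\Omega_i)} \le |\Omega_i|^{\frac{1}{3}} \| S_1(u) \|_6$ and bound $\| \nabla S_1(u) \|_{L^2(\Omega_i)} \le \| \nabla S_1(u) \|_2$, so that after collecting terms the right-hand side carries the factor $|\Omega_i|^{\frac{1}{6}} \big( \kappa_1(\Omega_i) |\Omega_i|^{\frac{1}{3}} + \kappa_2(\Omega_i) \big)^{\frac{1}{2}}$ multiplying $\| \nabla S_1(u) \|_2$.

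Assembling these factors, the boundary contribution for each index $i$ is controlled by $L_i |\Omega_i|^{\frac{1}{6}} |\partial \Omega_i|^{\frac{1}{2}} \big( \kappa_1(\Omega_i) |\Omega_i|^{\frac{1}{3}} + \kappa_2(\Omega_i) \big)^{\frac{1}{2}} \| \nabla S_1(u) \|_2 = D(\Omega_i) \| \nabla S_1(u) \|_2$, which is precisely the definition of $D(\Omega_i)$. A final application of Lemma~\ref{lem:2.1} in the form $\| \nabla S_1(u) \|_2 \le C \| u \|_2^{\frac{3}{2}} \| \nabla u \|_2^{\frac{1}{2}}$ then completes the estimate. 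The main obstacle I anticipate is the correct bookkeeping in applying Proposition~\ref{prop:6.1}: one must apply it to $S_1(u)$ and carefully track the three geometric quantities $\kappa_1(\Omega_i)$, $\kappa_2(\Omega_i)$ together with the powers of $|\Omega_i|$ so that they recombine into exactly $D(\Omega_i)$; everything else reduces to routine H\"older estimates together with the already-established $L^6$ and $\dot{H}^1$ control of $S_1(u)$.
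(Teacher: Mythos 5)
Your proposal is correct and follows essentially the same route as the paper: H\"older with exponents $6$ and $\frac{6}{5}$ plus Lemma~\ref{lem:2.1} for $A_2$, and for $A_3$ the bound $|x|\le L_i$ with Cauchy--Schwarz on $\partial\Omega_i$, Proposition~\ref{prop:6.1} applied to $S_1(u)$, interior H\"older giving $\|S_1(u)\|_{L^2(\Omega_i)}\le|\Omega_i|^{\frac{1}{3}}\|S_1(u)\|_6$, and the Sobolev/$\dot H^1$ control of $S_1(u)$ to recombine everything into $D(\Omega_i)$. The only (immaterial) difference is that the paper applies Cauchy--Schwarz to $\int_{\partial\Omega_i}|S_1(u)|\,|x|\,dS_i$ directly and then bounds $\left(\int_{\partial\Omega_i}|x|^2\,dS_i\right)^{\frac{1}{2}}\le L_i|\partial\Omega_i|^{\frac{1}{2}}$, whereas you bound $|x|\le L_i$ pointwise first.
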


\begin{proof}
First we observe that 
\[
|A_2(u)| \le \frac{1}{4} \sum_{i=1}^m \alpha_i \int_{\Omega_i} |S_1(u)|\,dx,
\]
from which the estimate for $A_2$ can be obtained by the H\"older inequality
and Lemma \ref{lem:2.1}.
Next by Proposition \ref{prop:6.1}, 
the H\"older inequality and the Sobolev inequality, one has
\begin{align*}
|A_3(u)| & \le \frac{1}{2} \sum_{i=1}^m \alpha_i
\int_{\partial \Omega_i} |S_1(u)| |x| \,dS_i \\
&\le \frac{1}{2} \sum_{i=1}^m \alpha_i
\left( \int_{\partial \Omega_i} |S_1(u)|^2 \,dS_i \right)^{\frac{1}{2}}
\left( \int_{\partial \Omega_i} |x|^2 \,dS_i \right)^{\frac{1}{2}} \\
&\le \frac{1}{2} \sum_{i=1}^m \alpha_i L_i | \partial \Omega_i|^{\frac{1}{2}}
\left( \kappa_1(\Omega_i) \| S_1(u) \|_{L^2(\Omega_i)}^2
+\kappa_2(\Omega_i) \| S_1(u) \|_{L^2(\Omega_i)}
\| \nabla S_1(u) \|_{L^2(\Omega_i)} \right)^{\frac{1}{2}} \\
&\le \frac{1}{2} \sum_{i=1}^m \alpha_i L_i |\partial \Omega_i|^{\frac{1}{2}}
\left( \kappa_1(\Omega_i) |\Omega_i|^{\frac{2}{3}} \| S_1(u) \|_{L^6(\R^3)}^2
+\kappa_2(\Omega_i) | \Omega_i |^{\frac{1}{3}} \| S_1(u) \|_{L^6(\R^3)}
\| \nabla S_1(u) \|_{L^2(\R^3)} \right)^{\frac{1}{2}} \\
&\le C \sum_{i=1}^m \alpha_i L_i |\Omega_i|^{\frac{1}{6}}
|\partial \Omega_i|^{\frac{1}{2}} 
\left( \kappa_1(\Omega_i)|\Omega_i|^{\frac{1}{3}} 
+\kappa_2(\Omega_i) \right)^{\frac{1}{2}} 
\| \nabla S_1(u) \|_{L^2(\R^3)} \\
&\le C \sum_{i=1}^m \alpha_i D(\Omega_i) \| u \|_2^{\frac{3}{2}}
\| \nabla u \|_2^{\frac{1}{2}}.
\end{align*}
This completes the proof.
\end{proof}

Now we are ready to prove Theorem \ref{thm:6.3}.

\begin{proof}[Proof of Theorem \ref{thm:6.3}]
We establish the existence of a minimizer for $2<p<\frac{7}{3}$.
For this purpose, it suffices to modify the proof of Lemma \ref{lem:3.4} only,
because the other part of the existence proof 
does not rely on $x \cdot \nabla \rho(x)$.
Under the same notation as the proof of Lemma \ref{lem:3.4}
and applying Lemma \ref{lem:6.4}, we arrive at 
\begin{align*} 
f^{\prime}(\lambda)
&= \frac{p-1+(3p-7)b}{2} \lambda^{\frac{p-3+(3p-7)b}{2}} E(u_{\ep}) 
- \frac{p-1+(3p-7)b}{4} \lambda^{\frac{p-3+(3p-7)b}{2}} \| \nabla u_{\ep} \|_2^2 \\
&\quad - e^2 \left( \frac{p-1+(3p-7)b}{2} \lambda^{\frac{p-3+(3p-7)b}{2}}
- (1-b) \lambda^{-b} \right) A_1(u_{\ep}) \\
&\quad - e^2 \big( p-1 +(3p-7)b \big) \lambda^{\frac{p-3+(3p-7)b}{2}} A_2(u_{\ep}) 
+2 e^2 \frac{d}{d\lambda} 
\left( \lambda^{-1 -2b} A_2 \big( (u_{\ep})_\lambda \big) \right).
\end{align*}
Choosing $b=1$ for simplicity and using \ef{eq:2.3}, we infer that 
\[
\begin{aligned}
f'(\lambda) &\le 2(p-2) \lambda^{2p-5} E(u_{\ep}) 
-(p-2) \lambda^{2p-5} \| \nabla u_{\ep} \|_2^2
- 4 (p-2) e^2 \lambda^{2p-5} A_2(u_{\ep}) \\
&\quad +2 e^2 \frac{d}{d\lambda} 
\left( \lambda^{-3} A_2 \big( (u_{\ep})_\lambda \big) \right).
\end{aligned}
\]

Now recalling \ef{eq:6.1} and the definition of $\Omega(\lambda)$ in Lemma \ref{lem:6.4},
one finds that
\begin{align*} 
A_2 \big( (u_{\ep})_\lambda \big) 
&= -\frac{1}{4} \sum_{i=1}^m \alpha_i \int_{\mathbb{R}^3} 
S_1 \big( (u_{\ep})_{\lambda} \big)(x) \chi_{\Omega_i}(x) \,d x 
= -\frac{\lambda^{-1}}{4} \sum_{i=1}^m \alpha_i \int_{\mathbb{R}^3} 
S_1 (u_{\ep})(x) \chi_{\Omega_i}\left(\frac{x}{\lambda}\right) \,dx \notag \\
&= -\frac{\lambda^{-1}}{4} \sum_{i=1}^m \alpha_i \Omega_i (\lambda).
\end{align*}
Thus it follows that
\[
\begin{aligned}
2 e^2 \frac{d}{d \lambda} \left( \lambda^{-3} A_2 \big( (u_{\ep})_{\lambda} \big) \right)
&= -\frac{e^2}{2} \sum_{i=1}^m \alpha_i 
\frac{d}{d \lambda} \left( \lambda^{-4} \Omega_i(\lambda) \right) 
= 2  \lambda^{-5} e^2 \sum_{i=1}^m \alpha_i \Omega_i (\lambda)
-\frac{\lambda^{-4}}{2} e^2 \sum_{i=1}^m \alpha_i \Omega_i^{\prime}(\lambda).
\end{aligned}
\]
Moreover from \ef{eq:6.3} and by Lemma \ref{lem:6.4}, one has
\[
\begin{aligned}
\sum_{i=1}^m \alpha_i \Omega_i'(\lambda)
= \sum_{i=1}^m \int_{\partial \Omega_i}
\lambda^2 S_1(u_{\ep})(\lambda x) x \cdot n \,dS
= \sum_{i=1}^m \int_{\partial \Omega_i} S_1\big( (u_{\ep})_{\lambda} \big)(x) x \cdot n \,dS
&= -2 A_3 \big( (u_{\ep})_{\lambda} \big)
\end{aligned}
\]
and hence 
\[
2 e^2 \frac{d}{d \lambda} \left( \lambda^{-3} A_2 \big( (u_{\ep})_{\lambda} \big) \right)
= -8 \lambda^{-4} e^2 A_2 \big( (u_{\ep})_{\lambda} \big)
+ \lambda^{-4} e^2 A_3 \big( (u_{\ep})_{\lambda} \big).
\]
Finally by Lemma \ref{lem:2.1} and Lemma \ref{lem:6.6}, we deduce that
\[
\begin{aligned}
\left| A_2 \big( (u_{\ep})_{\lambda} \big) \right|
&\le C \sum_{i=1}^m \alpha_i | \Omega_i |^{\frac{5}{6}} 
\| (u_{\ep})_{\lambda} \|_2^{\frac{3}{2}}
\| \nabla (u_{\ep})_{\lambda} \|_2^{\frac{1}{2}} 
= C \lambda^{\frac{3}{2}} \mu^{\frac{3}{4}}
\sum_{i=1}^m \alpha_i |\Omega_i |^{\frac{5}{6}} 
\| \nabla u_{\ep} \|_2^{\frac{1}{2}}, \\
\left| A_3 \big( (u_{\ep})_{\lambda} \big) \right|
&\le C \lambda^{\frac{3}{2}} \mu^{\frac{3}{4}}
\sum_{i=1}^m \alpha_i D(\Omega_i) \| \nabla u_{\ep} \|_2^{\frac{1}{2}}.
\end{aligned}
\]
Using \ef{eq:6.2}, we obtain
\[
\begin{aligned}
f'(\lambda) &\le 2(p-2) \lambda^{2p-5}
\left\{ \frac{1}{4} c_{\infty} \left( \frac{\mu}{2} \right) 
- \frac{1}{8} \| \nabla u_{\ep} \|_2^{\frac{1}{2}} 
+ C_2 e^{\frac{8}{3}} \mu \left( \sum_{i=1}^m \alpha_i D(\Omega_i) \right)^{\frac{4}{3}}
\ \right\} \\
&\quad + C_1 \lambda^{-\frac{5}{2}} e^2 \mu^{\frac{3}{4}}
\sum_{i=1}^m \alpha_i D(\Omega_i) \| \nabla u_{\ep} \|_2^{\frac{1}{2}}.
\end{aligned}
\]
Thus similarly to Lemma \ref{lem:3.4},
there exists $\rho_0 = \rho(e, \mu)>0$ such that 
\[
\sum_{i=1}^m \alpha_i D(\Omega_i) \le \rho_0
\quad \Rightarrow \quad f'(\lambda)<0 \ \text{for all} \ \lambda >1
\ \text{and} \ s \in \left[ \frac{\mu}{2\cdot 2^{\frac{1}{2p-4}}}, \mu \right],
\]
from which we can conclude.

We can also show the other parts of Theorem \ref{thm:6.3}
by modifying the proof of Lemma \ref{lem:3.8} in a similar way. 
\end{proof}

\section{Concluding remark and an open problem}

In this paper, the nonlinear Schr\"odinger-Poisson system with a doping profile
has been investigated.
By establishing the existence of $L^2$-constraint minimizers when
\[ 
2<p<\frac{7}{3}, \ \mu > 2\cdot 2^{\frac{1}{2p-4}} \mu^* 
\quad \text{and} \quad 
\| \rho \|_{\frac{6}{5}} + \| x \cdot \nabla \rho \|_{\frac{6}{5}} \ll 1. 
\]
we are able to obtain stable standing waves.
The presence of a doping profile $\rho$ causes a difficulty of
proving the strict sub-additivity which is the key of 
the existence and the stability of standing waves.
This paper concludes by providing one open problem.

\medskip
\noindent
\textbf{Problem}: \ 
Non-existence of minimizers for large $\rho$ ?

\medskip
We have shown the existence of minimizers when $\rho$ is small,
but we don't know what happens if $\rho$ is large.
In 1D case, it was shown in \cite{DeLeo} that
no minimizer exists in the case $\mu < \| \rho \|_{L^1(\R)}$,
which was referred to the \textit{supercritical case}.
(Note that \cite{DeLeo} deals with the Schr\"odinger-Poisson system
with $\Delta \phi = \frac{1}{2}(|u|^2-\rho(x))$
so that the sign in the front of $A(u)$ in \ef{eq:1.4} is opposite.)
Hence a natural question is whether a similar result holds for the 3D problem.

To explain the idea in \cite{DeLeo}, 
let us consider the problem in $\R^N$ and denote by $G(x)$
the fundamental solution of $-\Delta$ on $\R^N$.
Under the assumption $\rho \ge 0$ and $\rho \in L^1(\R^N)$,
the nonlocal term $A(u)$ can be expressed as follows.
\begin{align*}
A(u) &= 
\frac{1}{8} \intR \intR G(x-y) \big( |u(x)|^2-\rho(x) \big)
\big( |u(y)|^2-\rho(y) \big) \,dx\,dy \\
&= \frac{1}{8} \intR \intR G(x-y) |u(x)|^2 (u(y)|^2 \,dx \,dy
-\frac{1}{4} \intR \intR G(x-y)|u(x)|^2 \rho(y) \,dx\,dy + A_0 \\
&= \frac{1}{8} \intR \intR \big( G(x-y)-G(x)-G(y) \big) 
|u(x)|^2 |u(y)|^2 \,dx \,dy \\
&\quad +\frac{1}{4} \big( \| u\|_{L^2(\R^N)}^2 - \| \rho \|_{L^1(\R^N)} \big)
\intR G(x) |u(x)|^2 \,dx \\
&\quad -\frac{1}{4} \intR \intR \big( G(x-y)-G(x) \big) |u(x)|^2 \rho(y) \,dx\,dy
+A_0.
\end{align*}
Here we have used Fubini's theorem and wrote 
$A_0= \frac{1}{8} \intR \intR G(x-y) \rho(x) \rho(y) \,dx \,dy$.
Thus one has $c(\mu)=-\infty$ if we could show that 
there is a family $\{ u_{\lambda} \} \subset H^1(\R^N)$ satisfying
\begin{align}
&\| u_{\lambda} \|_{L^2(\R^N)}^2= \mu, 
\quad \| \nabla u_{\lambda} \|_{L^2(\R^N)} \le C, \label{eq:7.1} \\
&\left| \intR \intR \big( G(x-y)-G(x)-G(y) \big) 
|u_{\lambda}(x)|^2 |u_{\lambda}(y)|^2 \,dx \,dy \right| \le C, \notag \\
&\left| \intR \intR \big( G(x-y)-G(x) \big) |u_{\lambda}(x)|^2 \rho(y) \,dx\,dy \right|
\le C, \notag \\
&\hbox{but} \intR G(x) |u_{\lambda}(x)|^2 \,dx \to \infty 
\quad \hbox{as} \ \lambda \to \infty, \label{eq:7.2}
\end{align}
where $C>0$ is a constant independent of $\lambda$.

When $N=1$, it follows that $G(x)= \frac{1}{2}|x|$ 
and $\{ u_{\lambda} \}$ can be constructed by considering a function
whose mass is supported near the origin and infinity.
(See \cite[Example 4.1 and Remark 4.6]{DeLeo}.)
However in the 3D case, which yields $G(x)= \frac{1}{4\pi |x|}$, 
it cannot happen that both \ef{eq:7.1} and \ef{eq:7.2} are fulfilled.
Indeed by the Hardy inequality, we have
\[
\begin{aligned}
\intR G(x) |u(x)|^2 \,dx
&= \frac{1}{4\pi} \intR \frac{ |u(x)|^2}{|x|} \,dx
\le \frac{1}{4\pi} \left( \intR \frac{|u(x)|^2}{|x|^2} \,dx \right)^{\frac{1}{2}}
\left( \intR |u(x)|^2 \,dx \right)^{\frac{1}{2}} \\
&\le C \| \nabla u \|_{L^2(\R^N)} \| u \|_{L^2(\R^N)}.
\end{aligned}
\]
Moreover as we have shown in Lemma \ref{lem:3.1}, 
the energy functional on $\R^3$ is always bounded from below,
regardless of the size of $\rho$.
Therefore the only possibility for the non-existence is that 
the strict sub-additivity does not hold when $\mu< \| \rho \|_{L^1(\R^N)}$.
Moreover as we have observed in this paper,
in the 3D problem, 
it is rather natural to measure the size of $\rho$ by $L^{\frac{6}{5}}$-norm,
which makes us to conjecture that the non-existence result may be obtained
if $\| \rho \|_{\frac{6}{5}}$ is large.

\bigskip
\noindent {\bf Acknowledgements.}
The authors are grateful to anonymous referees for 
carefully reading the manuscript and providing us valuable comments,
especially about the proof of Lemma \ref{lem:3.6}.
The second author has been supported by JSPS KAKENHI Grant Numbers 
JP21K03317, JP24K06804.
This paper was carried out while the second author was staying
at the University of Bordeaux. 
The second author is very grateful to all the staff of the University of Bordeaux 
for their kind hospitality.


\medskip

\end{document}